\def\pdt2{\partial_t^2}
\def\pdx2{\partial_x^2}
\def\RR{{\mathbb{R}}}
\newcommand{\bR}{{\mathbb R}}
\newcommand{\bB}{{\bf B}}
\newcommand{\bT}{{\mathbb T}}
\newcommand{\fe}{\mathrm{e}}
\def\e{{\mathrm e}}
\def\eps{\varepsilon}
\newcommand{\bv}{{\bf v}}
\newcommand{\bx}{\mathbf{x}}
\newcommand{\bE}{\mathbf{E}}
\def\forall{\qquad\hbox{ for all }\ }
\newtheorem{theo}{Theorem}[section]
\newtheorem{lem}[theo]{Lemma}
\newtheorem{rem}[theo]{Remark}
\newtheorem{defi}[theo]{Definition}
\numberwithin{equation}{section}
\newcommand{\norm}[1]{\left\Vert#1\right\Vert}
\newcommand{\normm}[1]{\left\Vert#1\right\Vert_{W_{\tau}^{1,\infty}}}
\newcommand{\normo}[1]{\left\Vert#1\right\Vert_{L_{\tau}^{\infty}}}
\newcommand{\normz}[1]{\left\Vert#1\right\Vert_{L_{t}^{\infty}(W_{\tau}^{1,\infty})}}
\newcommand{\abs}[1]{\left\vert#1\right\vert}
\title[Optimally accurate discretizations for CPD]{Semi-discretization and full-discretization with optimal accuracy for charged-particle
dynamics in a strong nonuniform magnetic field}
\author[B. Wang]{Bin Wang}\address{\hspace*{-12pt}B.~Wang: School of Mathematics and Statistics, Xi'an Jiaotong University, 710049 Xi'an, China}
\email{wangbinmaths@xjtu.edu.cn}\urladdr{http://gr.xjtu.edu.cn/web/wangbinmaths/home}
\author[Y. L. Jiang]{Yaolin Jiang}
\address{\hspace*{-12pt}Corresponding author.  Y. L.~Jiang: School of Mathematics and Statistics, Xi'an Jiaotong University, 710049 Xi'an, China}
\email{yljiang@mail.xjtu.edu.cn}
\urladdr{http://gr.xjtu.edu.cn/web/yljiang}
\begin{document}
\maketitle
\dedicatory{}

\begin{abstract}
 The aim of this paper is to formulate and analyze numerical discretizations of charged-particle
dynamics (CPD) in a strong nonuniform magnetic field. A strategy is firstly performed for the two dimensional CPD to construct the semi-discretization and full-discretization which have optimal accuracy. This accuracy is improved in the position and in the velocity when the strength of the magnetic field becomes stronger. This is a better feature than the usual so called ``uniformly accurate methods". To obtain this refined accuracy, some reformulations of the problem and two-scale exponential integrators are incorporated, and the optimal accuracy is derived from this new procedure. Then based on the strategy given for the two dimensional case, a new class of uniformly accurate methods with simple scheme is formulated for the three dimensional CPD in maximal ordering case. All the theoretical results of the accuracy are numerically illustrated by some numerical tests.
 \\
{\bf Keywords:} Charged particle dynamics, optimal accuracy, strong nonuniform magnetic field, high oscillations, two-scale exponential integrators \\
{\bf AMS Subject Classification:} 65L05, 65L70, 78A35, 78M25.
\end{abstract}

\section{Introduction}

The classical or relativistic description of the natural world is based on describing the interaction of
elements of matter via force fields. One typical  example is the system of plasmas which is
composed by charged particles   interacting via electric
and magnetic fields. This system is  of paramount importance and  applications, comprising in plasma physics, astrophysics and  magnetic fusion devices    \cite{Arnold97,Birdsall}.
Motivated by recent interest in numerical methods for the
 the plasmas \cite{VP1,Zhao,VP2, CPC,VP3, VP4,VP5,VP9,VP-filbet,VP6,VP8,lubich19,VP7,SonnendruckerBook,WZ},  this paper is devoted to charged-particle
dynamics (CPD) in a strong nonuniform magnetic field. For such system of a large number of charged particles, its behavior can be described by the Vlasov equation  \cite{VP1,Zhao,VP9}:
\begin{equation}\label{model:linear}\left\{
  \begin{split}
  &\partial_tf(t,\bx,\bv)+\bv\cdot\nabla_\bx f(t,\bx,\bv)+\left(\bE(t, \bx)+ \bv \times\frac{\bB(t,\bx)}{\eps} \right)\cdot\nabla_\bv f(t,\bx,\bv)
  =0,\\
  &\nabla_{\bx}\cdot \bE(t,\bx)= \int_{\bR^d}f(t,\bx,\bv)d \bv-n_ i,\quad  0<t\leq T,
  \\
  & f(0,\bx,\bv)=f_0(\bx,\bv),\quad \bx,\ \bv\in\bR^2\textmd{ or }\bR^3,
  \end{split}\right.
\end{equation}
where   $f(t,\bx,\bv)$  depends on the time $t$, the position $\bx$ and the
velocity $\bv$, and represents the distribution of  charged particles under the effects of  the strongly external magnetic field $\frac{\bB(t,\bx)}{\eps}$ and  the self-consistent electric-field function $\bE(t, \bx)$. Here $0<\eps \ll 1$ determines the strength of the  magnetic field,  $n_ i$ denotes the ion density of the background, and $f_0(\bx,\bv)$ is a given initial distribution.

For the numerical approximation of the Vlasov model \eqref{model:linear}, consider the Particle-In-Cell (PIC) approach (\cite{CPC,VP3,VP4,VP5,SonnendruckerBook}) :
\begin{equation}\label{dirac}
f_{p}(t,\bx,\bv)=\sum_{k=1}^{N_p}\omega_k\delta(\bx-\bx_k(t))\delta(\bv-\bv_k(t)),\quad 0<t\leq T,
\end{equation}
where $\delta$ is the Dirac delta function and $\omega_k$ is the weight.
Plugging (\ref{dirac}) into (\ref{model:linear}) gives the equation on the characteristics:
\begin{equation}\label{charact}\left\{
\begin{split}
   & \dot{\bx}_k(t)=\bv_k(t), \quad \dot{\bv}_k(t)= \bv_k(t)\times\frac{\bB(t,\bx_k(t))}{\eps}+\bE(t,\bx_k(t)),\quad 0<t\leq T, \\
   & \bx_k(0)=\bx_{k,0},\quad \ \bv_k(0)=\bv_{k,0}.
\end{split}\right.
\end{equation}
In practical computations, the Dirac delta function $\delta(\bx)$ is usually replaced by the particle shape functions \cite{SonnendruckerBook} and hence the  PIC approximation is  accomplished by a particle pusher for (\ref{charact}).

For simplicity of notations and
without loss of generality, from now on, we   are concerned with the numerical solution of  the following  CPD with the same scheme of \eqref{charact}:
\begin{equation}\label{charged-particle 3d}
    \dot{x}(t)=v(t), \quad\ \dot{v} (t)=v(t)\times \frac{B(x(t))}{\eps}+E(x(t)),\ \  0<t\leq T, \ \ \
     x(0)=x_0\in\RR^d,\ \ \ v(0)=v_0\in\RR^d,
\end{equation}
where $x(t):(0,T  ]\to \RR^d$ and $v(t):(0,T  ]\to \RR^d$ are respectively the unknown position and velocity of a charged particle with the dimension $d=2\ \textmd{or}\ 3$,   $E(x)\in \RR^d$ is a given nonuniform electric-field function,  $B(x)\in \RR^d$ is a given magnetic field and  $0<\eps \ll 1$ is a dimensionless parameter determining the strength of the  magnetic field. For the   two dimensional CPD (i.e., $d=2$),
the system
\eqref{charged-particle 3d}   can be formulated as
\begin{equation}\label{charged-particle sts}
  \dot{x}(t)=v(t),  \quad\ \dot{v} (t)=\frac{b(x(t))}{\eps}Jv(t)+E(x(t)),\ \  0<t\leq T, \ \ \
     x(0)=x_0\in\RR^2,\ \  \ v(0)=v_0\in\RR^2,
\end{equation}
with   $b(x):\RR^2\to \RR$ satisfying $\abs{b(x(t))}\geq C>0$ and $J=\left(
                                                                           \begin{array}{cc}
                                                                             0 & 1 \\
                                                                             -1 & 0 \\
                                                                           \end{array}
                                                                         \right)$. It is noted that the discretization presented in this paper can be applied to the system with $B(t,x(t))$ and $E(t,x(t))$ without any difficulty and  can be used to design optimally accurate methods for the Vlasov equation \eqref{model:linear} combined with the PIC  approach.

For the  CPD \eqref{charged-particle 3d} or (\ref{charged-particle sts}), it has a long research history in the physical literature  \cite{Arnold97,Benettin94,Cary2009,add1,Northrop63}.  Meanwhile, the modeling and simulation of CPD  is of practical interest in scientific computing.  After particle discretization of some kinetic models,     the system \eqref{charged-particle 3d} or (\ref{charged-particle sts})  is a core problem which needs to be computed via effective numerical algorithms \cite{VP1,Zhao,VP2, CPC,VP3, VP4,VP5,VP9,VP-filbet,VP6,VP8,VP7,SonnendruckerBook}.
Concerning the numerical algorithms for the  CPD \eqref{charged-particle 3d} or (\ref{charged-particle sts}),  two categories   have been in the center of research according to different regimes of magnetic field:   \emph{normal magnetic field} $\eps\approx 1$ and \emph{strong magnetic field} $0<\eps \ll 1$.

Earlier studies are devoted to the normal regime $\eps\approx 1$, comprising the well-known Boris method \cite{Boris1970} as well as some further researches on it   \cite{Hairer2017-1,lubich19,Qin2013},  volume-preserving algorithms \cite{He2015},  symmetric algorithms \cite{Hairer2017-2},  symplectic  algorithms \cite{He2017,Tao2016,Webb2014,Zhang2016},  variational integrators \cite{Hairer2018,PRL1},  splitting integrators \cite{Ostermann15} and energy-preserving algorithms  \cite{L. Brugnano2020,L. Brugnano2019,Li-ANM}.
 However, if those methods are applied to CPD with a strong magnetic field $0<\eps \ll 1$, this often adds a stringent restriction on
the time step used in the numerical algorithms.  The error constant of the methods mentioned above  is usually proportional to $1/\eps^p$ for some $p>0$, which is unacceptable for small $\eps$.

In order to handle this restriction, various novel methods with improved accuracy or  uniform  accuracy have been studied in recent years for CPD under a strong magnetic field  with $0<\eps \ll 1$.
An exponential energy-preserving integrator was formulated in \cite{Wang2020} for (\ref{charged-particle 3d}) in a strong uniform magnetic field
and uniform second order accuracy can be derived. In order to improve  the asymptotic behaviour of the  Boris method as $\eps\to0$,
two filtered Boris algorithms were developed and analysed in \cite{lubich19} under the maximal ordering scaling  \cite{scaling1,scaling2}, i.e. $B=B(\eps x)$. Some splitting methods with uniform error bounds have been proposed and studied in \cite{WZ}. Combined with the PIC discretization, some effective algorithms have been derived for the  CPD \eqref{charged-particle 3d} or (\ref{charged-particle sts}) such as  exponential integrators \cite{VP8}, asymptotic preserving schemes \cite{VP4,VP5,VP9,Chacon},  uniformly accurate schemes \cite{VP1,Zhao, CPC,VP3} and other efficient methods \cite{VP2,VP-filbet,VP6}.

Among those powerful numerical methods stated above for CPD in a strong magnetic field,  the best accuracy is $\mathcal{O}\big(\varepsilon h^r\big)$ in the $x$ and $\mathcal{O}\big(  h^r\big)$ in the $v$ with the time step size $h$ and the order $r=1,2$, which is achieved for solving the two dimensional CPD in   \cite{VP1}.
The main interest of this paper lies in a novel class of discretizations for solving the two dimensional CPD \eqref{charged-particle sts}, capable of  \textbf{having optimal accuracy
which is better than all the existing uniformly accurate algorithms}. More preciously, we  prove that \textbf{the  novel discretizations have the accuarcy
$\mathcal{O}\big(\varepsilon^2 h^{2r}\big)$ in the $x$ and $\mathcal{O}\big(\varepsilon h^{2r}\big)$ in the $v$, and thus as $\eps$ decreases, the method is more accurate in the approximation of both $x$ and $v$}.
 This optical  accuracy is   very competitive  in the  computation of  CPD in a strong nonuniform magnetic field.
 To get this refined accuracy,  some  reformulations of  the problem and   two-scale  exponential   integrators  are {incorporated
into} the formulation  of the discretization.  
 Meanwhile, based on the  strategy given  for the two dimensional case, we obtain a new kind of uniformly accurate algorithms with very simple scheme for  the three dimensional CPD  \eqref{charged-particle 3d} under the  maximal ordering scaling.

The outline of the paper is as follows. In section \ref{sec:2}, we propose the semi-discretization and rigorously prove its optimal accuracy for the two dimensional CPD. The full-discretization and  its optimal accuracy are performed in section \ref{sec:3}. In section \ref{sec:4}, some practical discretizations are constructed and the numerical tests are displayed to support  the theoretical results.  Section \ref{sec:5}
is devoted to  the application to the three dimensional CPD in maximal ordering case  and a class of uniformly accurate methods  is discussed.
Some  conclusions are drawn in section \ref{sec:con}.

 \section{Semi-discretization and optimal accuracy}\label{sec:2}
\subsection{Semi-discretization}\label{TRA}
For the two dimensional CPD \eqref{charged-particle sts},
let us first define some new variables $$\epsilon=\frac{\eps }{b_0} ,  \quad \
q(t)=x(t),\quad \  p(t)=\epsilon   v(t),$$ with $b_0=b(q(0))$. Then one gets from \eqref{charged-particle sts} that
\begin{equation}\label{H model problem}
  \dot{q}(t)=\frac{1}{\epsilon }p(t), \ \ \ \dot{p}(t)=   \frac{b(q(t))}{\epsilon b_0}Jp(t)+\epsilon  E(q(t)),\ \ \  \ q(0)=x_0,\ \     \ p(0)=\epsilon  v_0.
\end{equation}
Linearizing this system leads to
\begin{equation}\label{H3 model problem}  \dot{q}(t)=\frac{1}{\epsilon }p(t), \ \ \ \dot{p}(t)= \frac{1}{\epsilon }Jp(t)+F(q(t),p(t)),\ \ \ q(0)=x_0,\ \ \ p(0)=\epsilon  v_0,
\end{equation}
where $$F(q(t),p(t))=\frac{b(q(t))-b_0}{\epsilon b_0}Jp(t)+\epsilon  E(q(t)).$$
We notice that, by the fact that $p(t)=\epsilon   v(t)=\mathcal{O}(\epsilon)$,  the function $F(q(t),p(t))$ is bounded.

Then letting
 \begin{equation*}
 u(t)=q(t)-  t/\epsilon   \varphi_1( -t J/\epsilon )  p(t),\ \quad
 w(t)= \varphi_0( -t J /\epsilon )p(t),
\end{equation*}
with  $\varphi_0(z)=\fe^{z}$ and $\varphi_1(z)=(\e^z-1)/z$, \eqref{H3 model problem} can be reformulated as
  \begin{equation}\label{new-system}%
\left\{ \begin{aligned}
&\dot{u}(t)=-  t/\epsilon   \varphi_1( -t J /\epsilon ) F\big( u(t)+ t/\epsilon   \varphi_1( t J /\epsilon )w(t), \varphi_0( t J /\epsilon )w(t)\big),
\ \ u(0)=q(0),\\
&\dot{w}(t)= \varphi_0( -t J /\epsilon )F\big( u(t)+ t/\epsilon   \varphi_1( t J /\epsilon )w(t), \varphi_0( t J /\epsilon )w(t)\big),\ \   \ \ \ \ \ \
w(0)= p(0).
 \end{aligned}  \right.
\end{equation}
 Observing that  $J=\left(
                                                                           \begin{array}{cc}
                                                                             0 & 1 \\
                                                                             -1 & 0 \\
                                                                           \end{array}
                                                                         \right)$,  we deduce that
 $$s  \varphi_1(s J )=\left(
                           \begin{array}{cc}
                             \sin(s) & 1-\cos(s) \\
                             -1+\cos(s) & \sin(s) \\
                           \end{array}
                         \right)
 \quad \textmd{and}  \quad   \varphi_0(s J )=\left(
                           \begin{array}{cc}
                             \cos(s) & \sin(s) \\
                             -\sin(s) & \cos(s) \\
                           \end{array}
                         \right).
$$ This shows that $s  \varphi_1(s J )$ and $  \varphi_0(s J )$ are periodic in $s$ on  $[0,2\pi]$.
Therefore, the two-scale formulation (\cite{UAKG}) works for the transformed system \eqref{new-system} by considering $t/\eps$ as the fast time variable and $t$ as the slow one. With another variable $\tau$ to denote the fast time variable $t/\epsilon $,
the two-scale pattern of (\ref{new-system}) takes the form:
\begin{equation}\label{2scale}\left\{ \begin{split}
  &\partial_tX(t,\tau)+\frac{1}{\epsilon }\partial_\tau X(t,\tau)=-  \tau  \varphi_1( -\tau J ) F\big( X(t,\tau)+ \tau  \varphi_1( \tau J )V(t,\tau), \varphi_0( \tau J )V(t,\tau)\big),\\
    &\partial_tV(t,\tau)+\frac{1}{\epsilon }\partial_\tau V(t,\tau)=\varphi_0( -\tau J )F\big( X(t,\tau)+ \tau  \varphi_1( \tau J )V(t,\tau), \varphi_0( \tau J )V(t,\tau)\big),
    \end{split}\right.
\end{equation}
where $X(t,\tau)$ and $V(t,\tau)$ are periodic in $\tau$ on the
torus $\bT=[0,2\pi]$, and they satisfy  $X(t,\tau)=u(t),\ V(t,\tau)=w(t).$

The initial data for
(\ref{2scale}) is derived by using the  strategy from
\cite{VP1}, which is briefly introduced as follows.
With the notations $U(t,\tau)=[X(t,\tau);V(t,\tau)]$
and
\begin{equation}\label{ftau}f_\tau(U(t,\tau))=\left(
                      \begin{array}{c}
                      -  \tau  \varphi_1( -\tau J ) F\big( X(t,\tau)+ \tau  \varphi_1( \tau J )V(t,\tau), \varphi_0( \tau J )V(t,\tau)\big)\\
                      \varphi_0( -\tau J )F\big( X(t,\tau)+ \tau  \varphi_1( \tau J )V(t,\tau), \varphi_0( \tau J )V(t,\tau)\big)\\
                      \end{array}
                    \right),
\end{equation}
first compute $\underline{U}^{[k]}=\underline{U}^{[0]}-\epsilon B_0^{[k-1]}(\underline{U}^{[k-1]})$ with
 $\underline{U}^{[0]}=[x_0;\epsilon v_0]$. Then the $j$-th order initial data for (\ref{2scale})  is defined by \begin{equation}\label{inv}[X^0;V^0]:=U^{[j]}(\tau)=\underline{U}^{[0]}+\epsilon B_\tau^{[j]}\left(\underline{U}^{[j]}\right)
 -\epsilon B_0^{[j]}\left(\underline{U}^{[j]}\right),\end{equation}
where the result of {$B_\tau^{[k-1]}$   is derived by the recursion $B_\tau^{[0]}=0$ and
  \begin{equation*}
\begin{aligned}
 B_\tau^{[k+1]}(U)&=L^{-1}(I-\Pi)f_\tau\left(U+\epsilon B_\tau^{[k]}(U)\right)-
 \frac{L^{-1}}{\epsilon^{k-1}}\left[B_\tau^{[k]}\left(U+\epsilon ^k\Pi f_\tau\left(U+\epsilon B_\tau^{[k]}(U)\right)\right)-B_\tau^{[k]}(U)\right]. \end{aligned} \end{equation*}
In this paper,  $I$ is the identity operator, $\Pi$ denotes
 the averaging operator defined by $\Pi \vartheta:=\frac{1}{2\pi}\int_0^{2\pi}\vartheta(s)ds$ for some $\vartheta(\cdot)$ on $\bT$ and $L:=\partial_\tau $ is invertible with inverse defined by $(L^{-1} \vartheta)(\tau)=(I-\Pi)\int_0^{\tau}\vartheta(s)ds$.

We now present the novel semi-discrete scheme  of the  CPD \eqref{charged-particle sts}.
\begin{defi}\label{dIUA-PE}(\textbf{Semi-discrete scheme.})
For solving the CPD system  \eqref{charged-particle sts},  the semi-discrete scheme is defined as follows with a time step size $h$.

\textbf{Step 1.} The initial data  of \eqref{2scale} is derived from \eqref{inv} with $j=4$ and we denote it as $[X^{0};V^{0}]= U^{[4]}(\tau)$.

   \textbf{Step 2.}  For solving the two-scale system  \eqref{2scale},
  the  following $s$-stage two-scale exponential integrator is considered \begin{equation}\label{cs ei-ful2}
\begin{array}[c]{ll}%
&X^{ni}=\varphi_0(c_{i}h/\epsilon \partial_\tau)X^{n}-h\textstyle\sum\limits_{j=1}^{s}\bar{a}_{ij}(h/\epsilon \partial_\tau)\tau  \varphi_1( -\tau J )F\big( X^{nj} +\tau  \varphi_1( \tau J ) V^{nj},\varphi_0( \tau J )V^{nj}\big),\
i=1,2,\ldots,s,\\
&V^{ni}=\varphi_0(c_{i}h/\epsilon \partial_\tau)V^{n}+h\textstyle\sum\limits_{j=1}^{s}\bar{a}_{ij}(h/\epsilon \partial_\tau)\varphi_0(- \tau J )F\big( X^{nj} +\tau  \varphi_1( \tau J ) V^{nj},\varphi_0( \tau J )V^{nj}\big),\ \
i=1,2,\ldots,s,\\
&X^{n+1}=\varphi_0(h/\epsilon \partial_\tau)X^{n}- h\textstyle\sum\limits_{j=1}^{s}\bar{b}_{j}(h/\epsilon \partial_\tau)\tau  \varphi_1( -\tau J )F\big( X^{nj} +\tau  \varphi_1( \tau J ) V^{nj},\varphi_0( \tau J )V^{nj}\big),\\
&V^{n+1}=\varphi_0(h/\epsilon \partial_\tau)V^{n}+  h\textstyle\sum\limits_{j=1}^{s}\bar{b}_{j}(h/\epsilon \partial_\tau)\varphi_0(- \tau J )F\big( X^{nj} +\tau  \varphi_1( \tau J ) V^{nj},\varphi_0( \tau J )V^{nj}\big),
\end{array}\end{equation}
where $\bar{a}_{ij}(h/\epsilon \partial_\tau)$ and
$\bar{b}_{j}(h/\epsilon \partial_\tau)$ are uniformly bounded functions which will be determined in Section \ref{sec:4}.

\textbf{Step 3.}   The numerical solution  $x^{n+1}\approx x(t_{n+1})$ and $v^{n+1}\approx v(t_{n+1})$  of  \eqref{charged-particle sts} is defined by  \begin{equation*} \begin{aligned}
&x^{n+1}=  X^{n+1}+  \frac{t_{n+1}}{\epsilon }    \varphi_1(  t_{n+1} J/\epsilon )   V^{n+1},\qquad \ v^{n+1}=\frac{1}{\epsilon} \varphi_0(  t_{n+1} J/\epsilon )  V^{n+1},
 \end{aligned}
\end{equation*}
where $t_{n+1}=(n+1)h.$
\end{defi}

\subsection{Optimal accuracy}

In this present part, we derive the optimal accuracy of the semi-discrete scheme given in Definition \ref{dIUA-PE}.
For simplicity of notations,  we shall denote $C>0$   a generic constant independent of the time step $h$ or $\eps$ or $n$.
In this section, we   use the   norm $\norm{\cdot}$ of a vector
to denote the standard euclidian norm and that of a   scalar quantity  refers to the absolute value.
Meanwhile,  let $L_t^{\infty}:=L_t^{\infty}([0,T])$
and $L_{\tau}^{\infty}:=L_{\tau}^{\infty}([0,2\pi])$  denote the functional spaces in $t$
and $\tau$ variables, respectively.
 For a smooth periodic function $\vartheta(\tau)$ on $[0,2\pi]$, define its $W_{\tau}^{1,\infty}$-norm as (\cite{VP1})
 $\normm{\vartheta}:=\max\{\normo{\vartheta},\normo{\partial_{\tau}\vartheta}\},$
and
for a smooth vector field $\mathcal{V}(\tau)=\left(
                                     \begin{array}{c}
                                       \vartheta_1 \\
                                       \vartheta_2 \\
                                     \end{array}
                                   \right)
$ on $[0,2\pi]$,  introduce
 $\normm{\mathcal{V}}:=\max\{\normm{\vartheta_1},\normm{\vartheta_2}\}.$
 The main result is stated by the following theorem.
\begin{theo} \label{UA thm} (\textbf{Optimal accuracy})
\begin{table}[t!]
\centering
\begin{tabular}
[c]{|c|c|}\hline Stiff order conditions & Order\\\hline
$\psi_{1}(h/\epsilon \partial_\tau)=0$ & 1\\\hline
$\psi_{2}(h/\epsilon \partial_\tau)=0$ & 2\\
$\psi_{1,i}(h/\epsilon \partial_\tau)=0$ & 2\\\hline
$\psi_{3}(h/\epsilon \partial_\tau)=0$ & 3\\
$\sum\limits_{i=1}^{s}\bar{b}_{i}(h/\epsilon \partial_\tau)K\psi_{2,i}(h/\epsilon \partial_\tau)=0$ & 3\\\hline
$\psi_{4}(h/\epsilon \partial_\tau)=0$ & 4\\
$\sum\limits_{i=1}^{s}\bar{b}_{i}(h/\epsilon \partial_\tau)K\psi_{3,i}(h/\epsilon \partial_\tau)=0$ & 4\\
$\sum\limits_{i=1}^{s}\bar{b}_{i}(h/\epsilon \partial_\tau)K\sum\limits_{j=1}^{s}\bar{a}_{ij}(h/\epsilon \partial_\tau)A\psi_{2,i}(h/\epsilon \partial_\tau)=0$ & 4\\
$\sum\limits_{i=1}^{s}\bar{b}_{i}(h/\epsilon \partial_\tau)c_iK\psi_{2,i}(h/\epsilon \partial_\tau)=0$ & 4\\\hline
\end{tabular}
\caption{Stiff order conditions with any  bounded operators $K$ and $A$. }%
\label{tab1}%
\end{table}
  It is assumed that
 the nonlinear functions $E(x)$ and $b(x)$ are globally Lipschitz functions, i.e.,
 \begin{equation}\label{ASSU1}
\begin{aligned}
 \norm{E(x_1)-E(x_2)} \leq  C\norm{ x_1 -x_2},\qquad \  \norm{b(x_1)-b(x_2)} \leq  C\norm{x_1- x_2},  \forall x_1,\ x_2\in \RR^2.
\end{aligned}
\end{equation}
Let $\varphi_k(z)=\int_0^1
\theta^{k-1}\fe^{(1-\theta)z}/(k-1)!d\theta$ for $k=2,3,\ldots $ (\cite{Ostermann}) and
 \begin{equation*}
\begin{aligned} &  \psi_j(z)=\varphi_j(z)-\Sigma_{k=1}^s\bar{b}_k(z)\frac{c_k^{j-1}}{(j-1)!},\ \ \ \
\psi_{j,i}(z)=\varphi_j(c_iz)c_i^j-\Sigma_{k=1}^s\bar{a}_{ik}(z)\frac{c_k^{j-1}}{(j-1)!},\ \ \ i=1,2,\ldots,s.
 \end{aligned}
\end{equation*}
Choose $r=2$ or $4$. Assume that the conditions of order $r-1$  given in Table  \ref{tab1} are true and for those of order $r$, the first one has the form $\psi_{r}(0)=0$ and the others hold in a weaker form with $\bar{b}_i(0)$ instead of
$\bar{b}_i(h/\epsilon \partial_\tau)$   for $1 \leq i \leq s$. Under the conditions stated above,  the global error of the semi-discrete scheme given in Definition \ref{dIUA-PE} is \begin{equation*}
\begin{aligned}
\norm{x^n-x(t_n)} +\norm{\epsilon v^n- \epsilon v(t_n)}  \leq C  \epsilon ^2 h^r,\qquad 0\leq n\leq T/h,\\
\end{aligned}
\end{equation*}
where $C$ is independent of $n, h, \epsilon$.
\end{theo}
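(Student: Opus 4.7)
The plan is to perform the entire error analysis on the two-scale reformulation \eqref{2scale} in the $L^\infty_t(W^{1,\infty}_\tau)$ norm, then transfer the resulting bounds back to the physical variables via Step 3 of Definition \ref{dIUA-PE}. As a preparatory step, I establish sharp a priori bounds: using $X(t,t/\epsilon)=u(t)$, $V(t,t/\epsilon)=w(t)=\varphi_0(-tJ/\epsilon)p(t)$ and $p=\epsilon v$, the well-prepared lifted solution satisfies $\|X\|_{L^\infty_t(W^{1,\infty}_\tau)}=\mathcal{O}(1)$ and $\|V\|_{L^\infty_t(W^{1,\infty}_\tau)}=\mathcal{O}(\epsilon)$ uniformly in $\epsilon$, together with the analogous $t$- and $\tau$-derivative bounds required downstream. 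The Chapman--Enskog iteration \eqref{inv} with $j=4$ guarantees that the initial data $[X^0;V^0]$ matches the well-prepared two-scale solution up to $\mathcal{O}(\epsilon^{5})$, an error safely absorbed into the target bound.

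The heart of the argument is a Hochbruck--Ostermann-style local error analysis of the exponential Runge--Kutta scheme \eqref{cs ei-ful2}. Inserting the exact solution into \eqref{cs ei-ful2} and Taylor-expanding in $h$ around $t_n$, the defect becomes a linear combination of elementary differentials of $f_\tau$ along the exact trajectory, each weighted by one of the residual operators $\psi_j(h/\epsilon\partial_\tau),\psi_{j,i}(h/\epsilon\partial_\tau)$ or by a product of such an operator with a bounded operator built from the $\bar a_{ij}$. The strong conditions of Table \ref{tab1} at orders $1,\ldots,r-1$ and the leading condition $\psi_r(0)=0$ at order $r$ annihilate the principal residuals; the remaining order-$r$ conditions, imposed in the weaker form with $\bar b_i(0)$, annihilate only the $\tau$-mean of the order-$r$ residual, while its mean-zero part is controlled by the fact that the non-mean $\tau$-components of the elementary differentials along the well-prepared trajectory are $\mathcal{O}(\epsilon)$. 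Combined with the observation that each $V$-dependence of $f_\tau$ inherits an extra factor of $\epsilon$ from $V=\mathcal{O}(\epsilon)$ together with the cancellation $\epsilon^{-1}\varphi_0(\tau J)V=\mathcal{O}(1)$ in $F$, every elementary differential entering the local error carries an overall $\epsilon^2$ factor, giving a local error of size $\mathcal{O}(\epsilon^2 h^{r+1})$ in $W^{1,\infty}_\tau$.

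For stability, I subtract \eqref{cs ei-ful2} driven by the exact two-scale trajectory from \eqref{cs ei-ful2} driven by the numerical one and let $(e^n_X,e^n_V):=(X^n-X(t_n,\cdot),V^n-V(t_n,\cdot))$. The Lipschitz assumptions \eqref{ASSU1}, the fact that each $\varphi_0(c_ih/\epsilon\partial_\tau)$ acts as an $L^\infty_\tau$-isometric shift by $c_ih/\epsilon$, and the uniform boundedness in the operator norm of $\bar a_{ij}(h/\epsilon\partial_\tau),\bar b_j(h/\epsilon\partial_\tau)$ produce a recursion $\|e^{n+1}\|_{W^{1,\infty}_\tau}\le(1+Ch)\|e^n\|_{W^{1,\infty}_\tau}+C\epsilon^2h^{r+1}$ uniformly in $\epsilon$. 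Discrete Gronwall over $0\le n\le T/h$ then yields $\|e^n\|_{W^{1,\infty}_\tau}\le C\epsilon^2h^r$. The back-transformation in Step 3 of Definition \ref{dIUA-PE}, together with the uniform-in-$s$ boundedness of $s\varphi_1(sJ)$ and $\varphi_0(sJ)$ computed after \eqref{new-system}, transfers this immediately into the claimed pointwise bound $\|x^n-x(t_n)\|+\|\epsilon v^n-\epsilon v(t_n)\|\le C\epsilon^2h^r$.

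The main technical obstacle I anticipate is the $\epsilon$-bookkeeping in the local error analysis. Generic two-scale analyses of exponential integrators yield only $\mathcal{O}(h^r)$, and extracting the optimal $\epsilon^2$ prefactor requires simultaneously tracking three effects: the cancellation of the explicit $\epsilon^{-1}$ in $F$ against $p=\mathcal{O}(\epsilon)$, the a priori smallness $V=\mathcal{O}(\epsilon)$, and the $\mathcal{O}(\epsilon)$ suppression of the mean-zero $\tau$-components of the elementary differentials along the well-prepared trajectory. This last point is precisely why the weaker order-$r$ conditions of Table \ref{tab1}, formulated with $\bar b_i(0)$ rather than $\bar b_i(h/\epsilon\partial_\tau)$, are still sufficient to deliver the full optimal accuracy.
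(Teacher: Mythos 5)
Your proposal follows essentially the same route as the paper: a priori bounds on the two-scale solution with $V=\mathcal{O}(\epsilon)$ and $\mathcal{O}(\epsilon)$ time derivatives secured by the well-prepared initial data, a Hochbruck--Ostermann-type defect expansion in which the stiff order conditions (with the weakened order-$r$ conditions) combine with the $\epsilon$-smallness of $F$ and of the time derivatives of the composition $G$ to give an $\mathcal{O}(\epsilon^2h^{r+1})$ local error, then a Lipschitz stability recursion with Gronwall and the back-transformation of Step 3. The argument is correct and matches the paper's Lemmas on boundedness and remainders followed by the error recursion in the proof of Theorem \ref{UA thm}.
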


\begin{rem}
We note that this optimal accuracy is a better feature than the usual so called   ``uniformly accurate methods"  \cite{VP1,Zhao, CPC,VP3}.   As $\eps$ decreases,
 the accuracy is improved to be $\mathcal{O}\big(\varepsilon^2 h^r\big)$ in the $x$ and $\mathcal{O}\big(\varepsilon h^r\big)$ in the $v$, which is   very competitive  in the scientific computing of  CPD with strong nonuniform magnetic field.
\end{rem}
To derive this optimal accuracy, we first present four lemmas and then give its proof.
%

\begin{lem}\label{lem1}
Under the assumptions \eqref{ASSU1} and the condition that   $\norm{x_0}+\norm{v_0} \leq C$, the solution of  \eqref{H model problem} satisfies
 \begin{equation}\label{a-priori bound}
  \norm{q(t)}+\norm{p(t)/\epsilon } \leq C, \forall t\in(0,T].
\end{equation}
Moreover, we have
$$ \norm{b(q(t))-b(q(0))}\leq C \epsilon ,\forall t\in(0,T].$$
\end{lem}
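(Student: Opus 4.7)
The plan is to establish the two claims in sequence. For the first claim, boundedness of $\|q\|+\|p\|/\epsilon$, I would take the inner product of the $p$-equation in \eqref{H model problem} with $p$. Because $J$ is antisymmetric, $p^{\top}Jp=0$ and the $O(1/\epsilon)$ term drops, yielding $\tfrac{1}{2}\tfrac{d}{dt}\|p\|^{2}=\epsilon\, p^{\top}E(q)$ and hence $\tfrac{d}{dt}\|p\|\le\epsilon\|E(q)\|$. By the Lipschitz hypothesis \eqref{ASSU1}, $\|E(q)\|\le\|E(0)\|+C\|q\|$; combined with $\tfrac{d}{dt}\|q\|\le\|p\|/\epsilon$ from the $q$-equation, the scalar $y(t):=\|q(t)\|+\|p(t)\|/\epsilon$ satisfies $\dot y\le C(1+y)$. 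Gronwall's lemma together with the data hypothesis $\|x_{0}\|+\|v_{0}\|\le C$ then yields $y(t)\le C_{T}$ uniformly on $[0,T]$, which is exactly \eqref{a-priori bound}.

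For the second claim, I would work with the reformulation \eqref{H3 model problem} and apply the variation-of-constants formula to the $p$-equation, giving
\begin{equation*}
p(t)=e^{Jt/\epsilon}p(0)+\int_{0}^{t}e^{J(t-s)/\epsilon}F(q(s),p(s))\,ds.
\end{equation*}
Integrating in $t$, swapping the order of the resulting double integral, and then dividing by $\epsilon$ using $q(t)-q(0)=\epsilon^{-1}\int_{0}^{t}p(s)\,ds$ and $p(0)=\epsilon v_{0}$ produces the Duhamel identity
\begin{equation*}
q(t)-q(0)=J^{-1}\bigl(e^{Jt/\epsilon}-I\bigr)\epsilon v_{0}+\int_{0}^{t}J^{-1}\bigl(e^{J(t-r)/\epsilon}-I\bigr)F(q(r),p(r))\,dr.
\end{equation*}
Since $e^{Jt/\epsilon}$ is a rotation in $\mathbb{R}^{2}$, the first term is manifestly $O(\epsilon)$. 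For the integrand of the second term, Lipschitz continuity of $b$ together with the bound from Step 1 gives
\begin{equation*}
\|F(q,p)\|\le\frac{|b(q)-b_{0}|}{\epsilon b_{0}}\|p\|+\epsilon\|E(q)\|\le C\|q-q(0)\|\cdot\frac{\|p\|}{\epsilon}+C\epsilon\le C\bigl(\|q-q(0)\|+\epsilon\bigr).
\end{equation*}
Substituting yields $\|q(t)-q(0)\|\le C\epsilon+C\int_{0}^{t}\|q(r)-q(0)\|\,dr$, and Gronwall's lemma then gives $\|q(t)-q(0)\|\le C\epsilon$ on $[0,T]$. One last application of the Lipschitz bound on $b$ delivers the claimed estimate $\|b(q(t))-b(q(0))\|\le C\epsilon$.

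The main obstacle is the self-referential structure of Step 2: the Lipschitz bound on $F$ involves $\|q-q(0)\|$, the very quantity being controlled. This arises because the rotation-frequency factor $(b(q)-b_{0})/\epsilon$ inside $F$ a priori amplifies the unknown by $\epsilon^{-1}$. The key to closing the estimate is to absorb this amplification by invoking $\|p\|\le C\epsilon$ from Step 1, after which what remains is a clean Gronwall inequality with $O(\epsilon)$ forcing. Without the Step 1 bound, the naive estimate of $F$ would not allow the bootstrap to terminate and one would only obtain the trivial $O(1)$ bound on $q(t)-q(0)$.
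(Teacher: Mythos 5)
Your proof is correct and follows essentially the same route as the paper: the a priori bound comes from the antisymmetry of $J$ killing the $O(1/\epsilon)$ term in the energy estimate plus Gronwall, and the $O(\epsilon)$ drift of $q$ comes from a Duhamel representation in which the oscillatory factor $\frac{1}{\epsilon}e^{J\cdot/\epsilon}$ integrates to an $O(1)$ quantity, closed by Gronwall using $\|p\|\le C\epsilon$ and the Lipschitz bound on $b$. The only differences are cosmetic: the paper filters with $\tilde p=e^{-Jt/\epsilon}p$ and integrates by parts, whereas you apply variation of constants to $p$ and swap the order of integration.
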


\begin{proof}
This result can be shown in a similar way of \cite{VP1}.
With the new notation $\tilde{p}(t):=e^{-Jt/\epsilon} p (t)$, the system \eqref{H model problem} reads
\begin{equation}\label{H1 model problem} \dot{q}(t)=\frac{1}{\epsilon }\varphi_0(Jt/\epsilon)\tilde{p}(t), \ \ \
\dot{\tilde{p}}(t)= \varphi_0(-Jt/\epsilon)F(q(t),\varphi_0(Jt/\epsilon)\tilde{p}(t)),\ \ \
q(0)=x_0,\ \ \   \tilde{p}(0)=\epsilon  v_0.
\end{equation}
 We first take the inner product on both sides of \eqref{H1 model problem} with
$q(t)$ and $\tilde{p}(t)$ and then use Cauchy-Schwarz inequality to get
\begin{equation*}
\begin{aligned}
&\frac{d}{dt}\norm{q(t)}^2\leq \frac{2}{\epsilon }\norm{q(t)}\norm{\tilde{p}(t)},\qquad \frac{d}{dt}\norm{\tilde{p}(t)}^2\leq 2 \epsilon  \norm{E(q(t))} \norm{\tilde{p}(t)}.\end{aligned}\end{equation*}
These estimates can be simplified as
\begin{equation*}
\begin{aligned}
&\frac{d}{dt}\norm{q(t)} \leq \frac{2}{\epsilon } \norm{\tilde{p}(t)},\qquad
 \frac{d}{dt}\norm{\tilde{p}(t)} \leq 2 \epsilon  \norm{E(q(t))} \leq 2 \epsilon \norm{E(q(0))}  +2 \epsilon C\norm{ q(t)} +2 \epsilon C\norm{ q(0)}.\end{aligned}\end{equation*}
The bound \eqref{a-priori bound} is deduced from these two inequalities and Gronwall’s inequality.

To prove the second statement, we integrate   the first equation in \eqref{H1 model problem}
\begin{equation*}
\begin{aligned}
 q(t) &=q(0)+\int_{0}^{t}\frac{1}{\epsilon }\varphi_0(J\xi/\epsilon)\tilde{p}(\xi)d \xi =q(0)-J\big(\varphi_0(Jt/\epsilon)\tilde{p}(t)-\tilde{p}(0)\big)+J\int_{0}^{t} \varphi_0(J\xi/\epsilon)\dot{\tilde{p}}(\xi)d \xi.
 \end{aligned}\end{equation*}
By inserting the second equation of \eqref{H1 model problem},  it is obtained that
\begin{equation*}
\begin{aligned}
 q(t)
 &=q(0)-J\big(\varphi_0(Jt/\epsilon)\tilde{p}(t)-\tilde{p}(0)\big)+J\int_{0}^{t} \varphi_0(J\xi/\epsilon)\varphi_0(-J\xi/\epsilon) F(q(\xi),\varphi_0(J\xi/\epsilon)\tilde{p}(\xi)) d \xi\\
  &=q(0)-J\big(\varphi_0(Jt/\epsilon)\tilde{p}(t)-\tilde{p}(0)\big)+J\int_{0}^{t}  \Big(
  \frac{b(q(\xi))-b_0}{\epsilon b_0}J\varphi_0(J\xi/\epsilon)\tilde{p}(\xi)+\epsilon  E(q(\xi))  \Big)d \xi,
 \end{aligned}\end{equation*}
so that
 \begin{equation*}
\begin{aligned}
 \norm{q(t)-q(0)}
 & \leq\norm{\varphi_0(Jt/\epsilon)\tilde{p}(t)-\tilde{p}(0)}+ \int_{0}^{t}
  \norm{\frac{b(q(\xi))-b_0}{\epsilon b_0}J\varphi_0(J\xi/\epsilon)\tilde{p}(\xi)}d \xi+\epsilon  T\sup_{0\leq t\leq T}\norm{E(q(t)) }\\
   & \leq C  \epsilon+ C\epsilon\int_{0}^{t}
 \norm{ q(\xi)-q(0)}  d \xi+\epsilon T \sup_{0\leq t\leq T}\norm{E(q(t)) }.
 \end{aligned}\end{equation*}
According to Gronwall’s inequality, we  obtain  the estimate
$\norm{q(t)-q(0)}  \leq C\epsilon$ and this leads to  the second result of this lemma.
\end{proof}

\begin{lem}\label{LBS}  
Under the assumptions \eqref{ASSU1}, $E(x)\in C^r( \RR^2)$ and $b(x)\in C^r(\RR^2)$, the solution of  \eqref{2scale} with the initial value $U^{[r]}(\tau)$  and its derivatives w.r.t.  $t$ are bounded by
\begin{equation}\label{UBOUND}\begin{aligned} &\normz{X(t,\tau)}\leq C,\quad \quad\quad \ \  \normz{V(t,\tau)}\leq C \epsilon ,\\
&\normz{\partial_t^{k}X(t,\tau)}\leq C \epsilon,\quad\quad \normz{\partial_t^{k}V(t,\tau)}\leq C \epsilon, \end{aligned}\end{equation}
where $k=1,2,\ldots,r$ and $0\leq t\leq t_0 $ for some $t_0>0$.
\end{lem}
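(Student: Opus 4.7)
The plan is to combine the pointwise bounds on the original solution from Lemma \ref{lem1} with the Chapman--Enskog structure that the initial data $U^{[r]}(\tau)$ from \eqref{inv} has been engineered to satisfy.

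First I would verify the estimates at $t=0$. From \eqref{inv} one has $U^{[r]}(\tau)=\underline{U}^{[0]}+\epsilon\bigl(B_\tau^{[r]}-B_0^{[r]}\bigr)(\underline{U}^{[r]})$ with $\underline{U}^{[0]}=[x_0;\epsilon v_0]$. The recursion defining $B_\tau^{[k]}$ is built from $f_\tau$ evaluated at perturbations of $\underline{U}^{[0]}$, and by the explicit form of $F$ in \eqref{H3 model problem} together with the Lipschitz assumption \eqref{ASSU1}, $f_\tau$ is smooth in $\tau$ and uniformly bounded whenever the $V$-component of its argument is of size $\mathcal{O}(\epsilon)$. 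A short induction on $k$ then yields $\normm{X^0}\leq C$ and $\normm{V^0}\leq C\epsilon$.

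Next, for the sup-in-$t$ bounds on $X$ and $V$ themselves, I would exploit the characteristic structure of \eqref{2scale}. For any fixed $\tau_0\in\bT$ the trajectory $s\mapsto Y(s):=U(s,\tau_0+s/\epsilon)$ satisfies the ODE $\dot Y=f_{\tau_0+s/\epsilon}(Y)$ with $Y(0)=U^{[r]}(\tau_0)=[x_0;\epsilon v_0]+\mathcal{O}(\epsilon)$. The identity $\varphi_0(-\tau J)J\varphi_0(\tau J)=J$ reduces the $V$-equation along this characteristic to $\dot Y_V=\frac{b(q)-b_0}{\epsilon b_0}JY_V+\epsilon\,\varphi_0(-\tau J)E(q)$, where $q$ is the position component of a shifted copy of \eqref{H model problem}. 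Applying Lemma \ref{lem1} to that shifted problem gives $\norm{q-x_0}\leq C\epsilon$, so the coefficient $(b(q)-b_0)/(\epsilon b_0)$ is bounded and Gronwall yields $\norm{Y_V(s)}\leq C\epsilon$ on $[0,t_0]$ for some $t_0>0$. Once $\norm{Y_V}=\mathcal{O}(\epsilon)$, the $X$-equation gives $\dot Y_X=\mathcal{O}(\epsilon)$, hence $\norm{Y_X(s)}\leq C$. Sweeping $\tau_0$ over $\bT$ produces $\normz{X}\leq C$ and $\normz{V}\leq C\epsilon$; the $\partial_\tau$ part of the $W_\tau^{1,\infty}$-norm is treated in parallel by differentiating the PDE once in $\tau$ before running the same argument.

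The bounds on the time derivatives are the core point of the lemma and the main obstacle. Rewriting \eqref{2scale} as $\partial_t U=f_\tau(U)-\epsilon^{-1}\partial_\tau U$ and using the Chapman--Enskog identity built into the construction of $B_\tau^{[r]}$, namely $\epsilon^{-1}\partial_\tau(\epsilon B_\tau^{[r]})=(I-\Pi)f_\tau(U^{[r]})+\mathcal{O}(\epsilon^r)$, I would obtain $\partial_t U=\Pi f_\tau(U)+\mathcal{O}(\epsilon)$ uniformly on $[0,t_0]$. A direct computation with the explicit form of $f_\tau$ in \eqref{ftau} shows that the $V$-independent part of $f_\tau$ has zero mean on $\bT$ (the relevant matrix integrals $\int_0^{2\pi}\varphi_0(\tau J)\,d\tau$ vanish, while $\int_0^{2\pi}\tau\varphi_1(\tau J)\,d\tau=2\pi J$ multiplies only terms linear in $V$), so $\Pi f_\tau(U)=\mathcal{O}(\norm{V}+\epsilon)=\mathcal{O}(\epsilon)$, giving $\normz{\partial_t X}+\normz{\partial_t V}\leq C\epsilon$. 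The higher-order bounds for $2\leq k\leq r$ follow by differentiating the PDE $k$ times in $t$ and iterating the same argument, the regularity $b,E\in C^r$ ensuring that chain-rule factors of $\partial_t U$ remain bounded by $\epsilon$. The delicate part is the quantitative verification of the Chapman--Enskog identity with a remainder of size $\epsilon^r$ and the closure of the bootstrap fixing $t_0$, both of which require a Taylor expansion of $f_\tau$ around $\underline{U}(t)+\epsilon B_\tau^{[r]}(\underline{U}(t))$ and careful tracking of the recursion defining $B_\tau^{[k]}$, in the spirit of the analysis in \cite{VP1}.
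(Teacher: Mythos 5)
Your treatment of the initial data and of the zeroth-order bounds matches the paper's: it likewise bounds $U^{[k]}(\tau)$ by induction through the recursion (using that $\Pi$ and $L^{-1}(I-\Pi)$ are bounded operators) and then invokes ``the same strategy as in the proof of Lemma \ref{lem1}'' to obtain $X=O(1)$, $V=O(\epsilon)$ and $X(t,\tau)-X(0,\tau)=O(\epsilon)$, which is exactly your Gronwall-along-characteristics argument. The ensuing observation that $F=O(\epsilon)$, hence $f_\tau(U)=O(\epsilon)$ pointwise, also matches the paper; note that this makes your computation of the means $\int_0^{2\pi}\varphi_0(\tau J)\,d\tau=0$ and $\int_0^{2\pi}\tau\varphi_1(\tau J)\,d\tau=2\pi J$ correct but superfluous, since the whole of $f_\tau(U)$ is already $O(\epsilon)$, not merely its average.

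The gap is in the step you yourself flag as delicate: the first time-derivative bound. Writing $\partial_t U=f_\tau(U)-\epsilon^{-1}\partial_\tau U$ and invoking the Chapman--Enskog identity controls $\partial_t U$ only at $t=0$, where the prepared data give $\epsilon^{-1}\partial_\tau U^{[r]}=(I-\Pi)f_\tau(U^{[r]})+O(\epsilon)$; to use this for $t>0$ you would need to prove that the solution \emph{stays} well prepared, i.e.\ that $\partial_\tau U(t,\cdot)=O(\epsilon^{2})$ persists on $[0,t_0]$, and you explicitly defer exactly that point. The paper closes the argument differently and more cheaply: it differentiates the two-scale system in $t$, so that $\partial_t U$ itself solves the linear homogeneous transport equation \eqref{fde} with bounded coefficients, computes from the structure of $U^{[r]}$ that the initial datum satisfies $[\partial_t X;\partial_t V]\big|_{t=0}=(I-\Pi)\big(f_0(\underline{U}^{[0]})-f_\tau(\underline{U}^{[0]})\big)+O(\epsilon)=O(\epsilon)$, and then runs the same Gronwall-along-characteristics argument to get $\normz{\partial_t X}+\normz{\partial_t V}\leq C\epsilon$. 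You already propose precisely this mechanism for $2\leq k\leq r$; applying it to $k=1$ as well removes any need for a uniform-in-time Chapman--Enskog identity and closes the proof. As written, your $k=1$ case rests on an unproved propagation statement.
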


\begin{proof}
According to the Chapman-Enskog expansion, the solution of  \eqref{2scale} can be partitioned into two parts
\begin{equation*} \begin{split}
  &X(t,\tau)=\underline{X}(t)+\bar{\varphi}(t,\tau)\ \ \ \textmd{with}\ \ \underline{X}(t)=\Pi X(t,\tau),\ \ \Pi\bar{\varphi}(t,\tau)=0,\\
  &V(t,\tau)=\underline{V}(t)+\bar{\psi}(t,\tau)\ \ \ \ \textmd{with}\ \ \underline{V}(t)=\Pi V(t,\tau),\ \ \Pi\bar{\psi}(t,\tau)=0,
    \end{split}
\end{equation*}
where the averaging operator is defined by $\Pi X(t,\tau):=\frac{1}{2\pi}\int_0^{2\pi}X(t,\tau)d \tau$.
   These composers satisfy the differential equations
 \begin{equation*}\begin{split}
  \dot{\underline{X}}(t)&=\Pi\Big(-  \tau  \varphi_1( -\tau J ) F\big( X(t,\tau)+ \tau  \varphi_1( \tau J )V(t,\tau), \varphi_0( \tau J )V(t,\tau)\big)\Big),\\
    \dot{\underline{V}}(t)&=\Pi\Big(\varphi_0( -\tau J )F\big( X(t,\tau)+ \tau  \varphi_1( \tau J )V(t,\tau), \varphi_0( \tau J )V(t,\tau)\big)\Big),\\
      \partial_t \bar{\varphi}(t,\tau)+\frac{1}{\epsilon }\partial_\tau \bar{\varphi}(t,\tau)&=(I-\Pi)\Big(-  \tau  \varphi_1( -\tau J ) F\big( X(t,\tau)+ \tau  \varphi_1( \tau J )V(t,\tau), \varphi_0( \tau J )V(t,\tau)\big)\Big),\\
    \partial_t \bar{\psi}(t,\tau)+\frac{1}{\epsilon }\partial_\tau \bar{\psi}(t,\tau) &=(I-\Pi)\Big(\varphi_0( -\tau J )F\big( X(t,\tau)+ \tau  \varphi_1( \tau J )V(t,\tau), \varphi_0( \tau J )V(t,\tau)\big)\Big).
    \end{split}
\end{equation*}
For this system, we first derive the bounds of its solution, which is accomplished by the initial value  \eqref{inv}.  The first order initial value is
  \begin{equation*}\begin{split}
  U^{[1]}(\tau)=&\underline{U}^{[0]}+\epsilon B_\tau^{[1]}\left(\underline{U}^{[1]}\right)
 -\epsilon B_0^{[1]}\left(\underline{U}^{[1]}\right)+O(\epsilon ^2)\\
 =&\underline{U}^{[0]}+\epsilon B_\tau^{[1]}\left(\underline{U}^{[0]}-\epsilon B_0^{[0]}(\underline{U}^{[0]})\right)
 -\epsilon B_0^{[1]}\left(\underline{U}^{[0]}-\epsilon B_0^{[0]}(\underline{U}^{[0]})\right)+O(\epsilon ^2)\\
  =&\underline{U}^{[0]}+\epsilon B_\tau^{[1]}\left(\underline{U}^{[0]}\right)
 -\epsilon B_0^{[1]}\left(\underline{U}^{[0]}\right)+O(\epsilon ^2)\\
   =&\underline{U}^{[0]}+\epsilon L^{-1}(I-\Pi)f_\tau\left(\underline{U}^{[0]}\right)
 -\epsilon L^{-1}(I-\Pi)f_0\left(\underline{U}^{[0]}\right)+O(\epsilon ^2).  \end{split}
\end{equation*}
Noticing that $ \Pi$ and  $L^{-1}(I-\Pi)$ are bounded operators, one deduces that $U^{[1]}(\tau)$ is uniformly bounded w.r.t. $\epsilon$. This procedure can be continued and the uniform bound (w.r.t. $\epsilon$) can be obtained for $U^{[k]}(\tau)$ with $k=2,3,\ldots,r.$
Using the same strategy as in the proof of  Lemma \ref{lem1}, we get that
for the
two-scale problem \eqref{2scale}, with the initial value $U^{[k]}(\tau)$  for any $k=0,1,\ldots,$
 $X(t,\tau)=O(1)$, $V(t,\tau)=O(\epsilon)$ and  $X(t,\tau)-X(0,\tau)=O(\epsilon)$.
Combining the fact that $b\big(X(t,\tau)+ \tau  \varphi_1( \tau J )V(t,\tau)\big)-b_0=O(\epsilon)$ leads to
\begin{equation*} \begin{split}
  &F\big( X(t,\tau)+ \tau  \varphi_1( \tau J )V(t,\tau), \varphi_0( \tau J )V(t,\tau)\big)\\
  =&\frac{b\big(X(t,\tau)+ \tau  \varphi_1( \tau J )V(t,\tau)\big)-b_0}{\epsilon b_0}J\varphi_0( \tau J )V(t,\tau)+\epsilon  E\big(X(t,\tau)+ \tau  \varphi_1( \tau J )V(t,\tau)\big)=O(\epsilon ).
    \end{split}
\end{equation*}
Then for the first derivatives  $\partial_t X(t,\tau)$ and $\partial_t V(t,\tau)$, they satisfy the equation
 \begin{equation}\label{fde}\begin{split}\partial_t (\partial_t X(t,\tau))+\frac{1}{\epsilon }
 \partial_\tau (\partial_t X(t,\tau))
 =& -  \tau  \varphi_1( -\tau J ) \Big(F_1
 \big(\partial_t X(t,\tau)+ \tau  \varphi_1( \tau J )\partial_t V(t,\tau)\big)+F_2
\varphi_0( \tau J )\partial_t V(t,\tau) \Big),\\
  \partial_t (\partial_t V(t,\tau))+\frac{1}{\epsilon }
 \partial_\tau (\partial_t V(t,\tau))=&\varphi_0( -\tau J )  \Big(F_1
 \big(\partial_t X(t,\tau)+ \tau  \varphi_1( \tau J )\partial_t V(t,\tau)\big)+F_2
\varphi_0( \tau J )\partial_t V(t,\tau) \Big),\\   \end{split}
\end{equation}
where $F_j=\partial_{U_j} F\big( U_1, U_2\big)\mid_{U_1=X(t,\tau)+ \tau  \varphi_1( \tau J )V(t,\tau),U_2=\varphi_0( \tau J )V(t,\tau)} $ for $j=1,2$. The initial value of \eqref{fde} is given by
 \begin{equation*}\begin{split}
[\partial_t X(t,\tau)|_{t=0};\partial_t V(t,\tau)|_{t=0}]&=-\frac{1}{\epsilon }\partial_\tau [X^0;V^0] +\left(
                                                                                                          \begin{array}{c}
                                                                                                            -  \tau  \varphi_1( -\tau J ) F\big( X^0+ \tau  \varphi_1( \tau J )V^0, \varphi_0( \tau J )V^0\big) \\
                                                                                                            \varphi_0( -\tau J )F\big( X^0+ \tau  \varphi_1( \tau J )V^0, \varphi_0( \tau J )V^0\big)\\
                                                                                                          \end{array}
                                                                                                        \right)\\
&=-\frac{1}{\epsilon }\partial_\tau\Big(\epsilon L^{-1}(I-\Pi)f_\tau\left(\underline{U}^{[0]}\right)
 -\epsilon L^{-1}(I-\Pi)f_0\left(\underline{U}^{[0]}\right)\Big)+O(\epsilon )\\
 &= (I-\Pi) \Big(f_0\left(\underline{U}^{[0]}\right)-f_\tau\left(\underline{U}^{[0]}\right)
 \Big)+O(\epsilon )=O(\epsilon ).
    \end{split}
\end{equation*}
 This initial value and the fact that $F_1=O(\epsilon ),  F_2=O(\epsilon )$ yield $$\norm{\partial_t X(t,\tau)}_{L_t^{\infty}(L_{\tau}^{\infty})}\leq C \epsilon,\quad \quad\ \  \norm{\partial_t V(t,\tau)}_{L_t^{\infty}(L_{\tau}^{\infty})}\leq C \epsilon.$$
 In an analogous way, we can derive the bounds of   the $j$th derivatives  $\partial^j_t X(t,\tau)$ and $\partial^j_t V(t,\tau)$ with $j=2,3,\ldots,r$.

 By differentiating the above system with respect
to   $\tau$ and in a similar way, we obtain the bounds   with $W_{\tau}^{1,\infty}$
estimates.
 The proof of this lemma is complete.
\end{proof}
\begin{lem} \label{stiff thm} (\textbf{Boundedness of the schemes})
If $(x_0,v_0)$  is uniformly bounded, there exists a sufficiently small $0 < h_0 \leq 1$ such that when $h \leq h_0$,  we have the following bounds for the integrator \eqref{cs ei-ful2} with $i=1,2,\ldots,s$
\begin{equation*}
\begin{aligned} & \normo{X^{ni}}\leq C ,\ \ \  \normo {V^{ni}/ \epsilon} \leq C,  \ \ \
  \normo {X^{n+1}} \leq C  ,\ \ \    \normo {V^{n+1}/ \epsilon} \leq C, \ \ \    0\leq n< T/h.\end{aligned} 
\end{equation*}
\end{lem}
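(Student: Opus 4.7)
The plan is a discrete induction on $n$, combining a Banach fixed-point argument for the implicit internal stages with a discrete Gronwall estimate for the update. I will actually carry the stronger inductive bound $\normo{X^n-x_0}\leq K\epsilon$, $\normo{V^n}\leq K\epsilon$ (closeness to $x_0$, not merely boundedness), since this is what makes the $1/\epsilon$ factor inside $F$ benign. Three structural facts underlie everything: (i) $\varphi_0(ch/\epsilon\partial_\tau)$ acts on functions of $\tau\in[0,2\pi]$ as the periodic shift $\tau\mapsto\tau-ch/\epsilon$ and is therefore an $L^\infty_\tau$-isometry that fixes constants; (ii) the multipliers $\tau\varphi_1(\pm\tau J)$ and $\varphi_0(\pm\tau J)$ are smooth and bounded in $\tau$; (iii) $\bar{a}_{ij}(h/\epsilon\partial_\tau)$ and $\bar{b}_j(h/\epsilon\partial_\tau)$ are uniformly bounded operators on $L^\infty_\tau$ by assumption. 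The base case $n=0$ follows by expanding $[X^0;V^0]=U^{[4]}(\tau)$ from \eqref{inv} and invoking boundedness of $\Pi$ and $L^{-1}(I-\Pi)$ exactly as in the proof of Lemma \ref{LBS}, which delivers $\normo{X^0-x_0}\leq C\epsilon$ and $\normo{V^0}\leq C\epsilon$.

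The key analytic step is a uniform Lipschitz bound for the composite nonlinearity $\mathcal{F}(X,V):=F\bigl(X+\tau\varphi_1(\tau J)V,\,\varphi_0(\tau J)V\bigr)$ on the ball $\{\normo{X-x_0}\leq M\epsilon,\ \normo{V}\leq M\epsilon\}$. Using \eqref{ASSU1}, on this ball one directly obtains $\normo{\mathcal{F}(X,V)}\leq C_M\epsilon$, and more importantly the Lipschitz constants of $\mathcal{F}$ in both arguments are $O(1)$ uniformly in $\epsilon$: the $V$-derivative contains the factor $(b(q)-b_0)/(\epsilon b_0)$, which is $O(1)$ precisely because $\normo{q-x_0}=O(\epsilon)$, while the $X$-derivative contains $\nabla b\cdot Jp/(\epsilon b_0)$, which is $O(1)$ precisely because $\normo{p}=O(\epsilon)$. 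The additive $\epsilon E$ part is obviously $\epsilon$-Lipschitz.

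With this in hand, I cast the first two lines of \eqref{cs ei-ful2} as a fixed-point equation $\Phi\bigl((X^{ni},V^{ni})_{i=1}^s\bigr)=(X^{ni},V^{ni})_{i=1}^s$ on a closed ball of radius $C'h\epsilon$ in $(L^\infty_\tau)^{2s}$ centered at the explicit guess $\bigl(\varphi_0(c_ih/\epsilon\partial_\tau)X^n,\varphi_0(c_ih/\epsilon\partial_\tau)V^n\bigr)_i$. Combining the inductive hypothesis, the $O(\epsilon)$ bound on $\mathcal{F}$, and the operator-norm facts (i)--(iii) shows $\Phi$ maps this ball into itself, with Lipschitz constant $Ch$. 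Hence for $h\leq h_0$ small enough Banach's theorem provides a unique fixed point, and the stages satisfy $\normo{X^{ni}-x_0}\leq(K+1)\epsilon$, $\normo{V^{ni}}\leq(K+1)\epsilon$. Feeding the stage bounds into the last two lines of \eqref{cs ei-ful2} and using $\varphi_0(h/\epsilon\partial_\tau)x_0=x_0$ then yields
\begin{equation*}
\normo{X^{n+1}-x_0}\leq\normo{X^n-x_0}+Ch\epsilon,\qquad \normo{V^{n+1}}\leq\normo{V^n}+Ch\epsilon,
\end{equation*}
so a discrete Gronwall over $n\leq T/h$ gives both sides $\leq C(T)\epsilon$, fixing $K$ in terms of $T$ alone and closing the induction. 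Dividing the $V$-estimate by $\epsilon$ recovers exactly the bounds asserted for $V^{ni}/\epsilon$ and $V^{n+1}/\epsilon$.

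The main obstacle is not the Gronwall or the contraction per se, but the need to carry the refined closeness $\normo{X^n-x_0}=O(\epsilon)$ through the induction. A weaker hypothesis of merely $\normo{X^n}=O(1)$ would only give $(b-b_0)/(\epsilon b_0)=O(1/\epsilon)$, inflating the Lipschitz constant of $\mathcal{F}$ to $O(1/\epsilon)$ and invalidating both the contraction in the stage step and the $O(h\epsilon)$ per-step growth needed for propagation. The fourth-order initialization $U^{[4]}(\tau)$ and the isometry property of $\varphi_0(h/\epsilon\partial_\tau)$ on $L^\infty_\tau$ are exactly what make it possible to preserve this refined closeness at every step uniformly in $\epsilon$.
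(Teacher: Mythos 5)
Your proposal is correct and its skeleton (induction on $n$, a Banach fixed-point argument for the implicit stages, a discrete Gronwall accumulation for the update) coincides with the paper's. The substantive difference is that you carry the \emph{strengthened} inductive hypothesis $\normo{X^n-x_0}\leq K\epsilon$, $\normo{V^n}\leq K\epsilon$, whereas the paper's proof only assumes $\normo{X^n}\leq C$, $\normo{V^n/\epsilon}\leq C$ and asserts that one step preserves these bounds "based on the bounds of $F$". Your refinement is not cosmetic: with merely $\normo{X^n}\leq C$ one only gets $b(\cdot)-b_0=\mathcal{O}(1)$, hence $F=\mathcal{O}(1)$ and a per-step increment $\mathcal{O}(h)$ in $V$, which after $T/h$ steps yields $\normo{V^n}=\mathcal{O}(1)$ rather than $\mathcal{O}(\epsilon)$ and does not close the induction for $\normo{V^n/\epsilon}$ uniformly in $\epsilon$. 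By propagating $X^n-x_0=\mathcal{O}(\epsilon)$ (which the $L^\infty_\tau$-isometry of $\varphi_0(h/\epsilon\,\partial_\tau)$ and the $\mathcal{O}(\epsilon)$ per-step drift make possible, exactly as in the continuous estimate $\norm{q(t)-q(0)}\leq C\epsilon$ of Lemma \ref{lem1}), you secure $F=\mathcal{O}(\epsilon)$ and hence the $\mathcal{O}(h\epsilon)$ increments needed for the Gronwall step. In effect your write-up supplies the quantitative mechanism that the paper's terse one-step argument leaves implicit; the $\mathcal{O}(1)$ Lipschitz bounds for the composite nonlinearity on the $\mathcal{O}(\epsilon)$-ball, which you verify explicitly, are also what the error analysis of Theorem \ref{UA thm} later relies on.
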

\begin{proof}For all $\epsilon$ and $h$, it is true that $\normo{\varphi_0(h/\epsilon \partial_\tau)}=1$, and
$\normo{\varphi_j(h/\epsilon \partial_\tau)}$ for  $j=1,2,\dots,$ are   uniformly bounded. According to the fact that
the coefficients of exponential integrators are composed of  $\varphi$-functions, we have
$\normo{\bar{a}_{ij}(h/\epsilon \partial_\tau)} \leq
C $ and $\normo{\bar{b}_{j}(h/\epsilon \partial_\tau)} \leq C$ for $i,j=1,2,\dots,s,$ where the constant $C$ is
independent of $h, \epsilon $.

We first prove the boundedness for a single time step of explicit methods.
Assume that the numerical solution
  satisfies $\normo{X^{n}} \leq C$ and $ \normo{ {V^{n}}/ \epsilon}\leq C$, then we have the estimates  for $n+1$:
  \begin{equation*}
\begin{array}[c]{ll}%
\normo{X^{ni}}& \leq\normo{X^{n}}+hC\textstyle\sum\limits_{j=1}^{i-1} \normo{F\big( X^{nj} +\tau  \varphi_1( \tau J ) V^{nj},\varphi_0( \tau J )V^{nj}\big) } ,\quad\
i=1,2,\ldots,s,\\
\normo{V^{ni}}& \leq\normo{V^{n}}+hC\textstyle\sum\limits_{j=1}^{i-1} \normo{F\big( X^{nj} +\tau  \varphi_1( \tau J ) V^{nj},\varphi_0( \tau J )V^{nj}\big)},\quad\
i=1,2,\ldots,s,\\
\normo{X^{n+1}}& \leq\normo{X^{n}}+ hC\textstyle\sum\limits_{j=1}^{s}\normo{F\big( X^{nj} +\tau  \varphi_1( \tau J ) V^{nj},\varphi_0( \tau J )V^{nj}\big)},\\
\normo{V^{n+1}}& \leq\normo{V^{n}}+  hC\textstyle\sum\limits_{j=1}^{s}\normo{F\big( X^{nj} +\tau  \varphi_1( \tau J ) V^{nj},\varphi_0( \tau J )V^{nj}\big)}.
\end{array}\end{equation*}
Based on the bounds of $F$ and $X^{n},V^{n}/ \epsilon$, the boundedness of $X^{n+1},V^{n+1}/ \epsilon$ is immediately obtained.

Then for a single time step of implicit methods,  iterative solutions
are {needed.} In this paper  we consider the fixed point iterative
pattern:
\begin{equation*}
\begin{aligned}
\big( {X^{ni}}\big)^{[0]}&=\varphi_0(c_{i}h/\epsilon \partial_\tau){X^{n}}-h\textstyle\sum\limits_{j=1}^{s}\bar{a}_{ij}(h/\epsilon \partial_\tau)\tau  \varphi_1( -\tau J )F\big( X^{n} +\tau  \varphi_1( \tau J ) V^{n},\varphi_0( \tau J )V^{n}\big),\\
\big( {V^{ni}}\big)^{[0]}&=\varphi_0(c_{i}h/\epsilon \partial_\tau){V^{n}}+h\textstyle\sum\limits_{j=1}^{s}\bar{a}_{ij}(h/\epsilon \partial_\tau)\varphi_0(- \tau J )F\big( X^{n} +\tau  \varphi_1( \tau J ) V^{n},\varphi_0( \tau J )V^{n}\big),\\
\big( {X^{ni}}\big)^{[m+1]}&=\varphi_0(h/\epsilon \partial_\tau) {X^{n}}- h\textstyle\sum\limits_{j=1}^{s}\bar{b}_{j}(h/\epsilon \partial_\tau)\tau  \varphi_1( -\tau J )F\big(\big( {X^{nj}}\big)^{[m]} +\tau  \varphi_1( \tau J )  \big( {V^{nj}}\big)^{[m]},\varphi_0( \tau J ) \big( {V^{nj}}\big)^{[m]}\big), \\
\big( {V^{ni}}\big)^{[m+1]}&=\varphi_0(h/\epsilon \partial_\tau) {V^{n}}+  h\textstyle\sum\limits_{j=1}^{s}\bar{b}_{j}(h/\epsilon \partial_\tau)\varphi_0(- \tau J )F\big(\big( {X^{nj}}\big)^{[m]} +\tau  \varphi_1( \tau J ) \big( {V^{nj}}\big)^{[m]},\varphi_0( \tau J ) \big( {V^{nj}}\big)^{[m]}\big),\\
&\qquad\qquad \qquad \qquad \qquad \qquad \qquad \qquad \qquad \qquad \qquad \qquad \qquad \qquad \qquad  m=0,1,\ldots.
\end{aligned}
\end{equation*}
With the
boundedness of $X^{n},V^{n}/ \epsilon$, the coefficients and the nonlinear function $F$, it is easy to derive the boundedness of $\big( {X^{ni}}\big)^{[m+1]},\ \big( {V^{ni}}\big)^{[m+1]}/ \epsilon$ and then of $X^{n+1},\ V^{n+1}/ \epsilon$.

Finally, considering the above results and using the mathematical induction,   the boundedness of explicit or implicit numerical solutions  \eqref{cs ei-ful2} as shown in this lemma over a long time interval is arrived.
\end{proof}

\begin{lem} \label{stiff thm} (\textbf{Remainders})
Inserting the exact solution of \eqref{2scale}
into the numerical approximation \eqref{cs ei-ful2}, we get
\begin{equation}\label{remainders}
\begin{array}[c]{ll}%
X(t_n+c_ih,\tau)&=\varphi_0(c_{i}h/\epsilon \partial_\tau)X(t_n,\tau)-h\textstyle\sum\limits_{j=1}^{s}\bar{a}_{ij}(h/\epsilon \partial_\tau)\tau  \varphi_1( -\tau J )G(X(t_n+c_jh,\tau), V(t_n+c_jh,\tau))+{\Delta_X^{ni}},\\
V(t_n+c_ih,\tau)&=\varphi_0(c_{i}h/\epsilon \partial_\tau)V(t_n,\tau)+h\textstyle\sum\limits_{j=1}^{s}\bar{a}_{ij}(h/\epsilon \partial_\tau)\varphi_0(- \tau J )G(X(t_n+c_jh,\tau), V(t_n+c_jh,\tau))+{\Delta_V^{ni}},\\
X(t_n+h,\tau)&=\varphi_0(h/\epsilon \partial_\tau)X(t_n,\tau)- h\textstyle\sum\limits_{j=1}^{s}\bar{b}_{j}(h/\epsilon \partial_\tau)\tau  \varphi_1( -\tau J )G(X(t_n+c_jh,\tau), V(t_n+c_jh,\tau))+ {\delta_X^{n+1}},\\
V(t_n+h,\tau)&=\varphi_0(h/\epsilon \partial_\tau)V(t_n,\tau)+  h\textstyle\sum\limits_{j=1}^{s}\bar{b}_{j}(h/\epsilon \partial_\tau)\varphi_0(- \tau J )G(X(t_n+c_jh,\tau), V(t_n+c_jh,\tau))+ {\delta_V^{n+1}},
\end{array}
\end{equation}
 where $ {\Delta_X^{ni}}, \ {\Delta_V^{ni}},\ {\delta_X^{n+1}},\ {\delta_V^{n+1}}$ are the remainders and $G( {X}, {V}):=
F\big( X  +\tau  \varphi_1( \tau J )  V ,\varphi_0( \tau J ) V \big)$.
 Under the conditions  of Theorem \ref{UA thm} and the
local assumptions of ${X^{n}}= {X(t_n,\tau)},\  {V^{n}}= {V(t_n,\tau)}$,
  the remainders are bounded   for $i=1,2,\ldots,s$ and $0\leq n< T/h$
\begin{equation*}
\begin{aligned} & \normm{ \Delta_X^{ni}}\leq C \epsilon ^2 h^{r},\quad   \normm{ \Delta_V^{ni}}\leq C \epsilon ^2
h^{r},  \quad  \normm{ \delta_X^{n+1}}\leq C \epsilon ^2 h^{r+1},\quad  \normm{ \delta_V^{n+1}}\leq C \epsilon ^2
h^{r+1}. \end{aligned} 
\end{equation*}

\end{lem}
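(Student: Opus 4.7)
The plan is to apply a Hochbruck--Ostermann-type stiff-order analysis for exponential Runge--Kutta methods to the two-scale system \eqref{2scale}, combined with the sharp smoothness bounds of Lemma~\ref{LBS}. I would first write the variation-of-constants formula for \eqref{2scale} on the subinterval $[t_n,t_n+h]$, treating $(1/\epsilon)\partial_\tau$ as the linear part and $\varphi_0((h/\epsilon)\partial_\tau)$ as its propagator, so as to express $X(t_n+c_ih,\tau)$, $V(t_n+c_ih,\tau)$, $X(t_n+h,\tau)$, $V(t_n+h,\tau)$ as $\varphi_0((c_ih/\epsilon)\partial_\tau)$-- (resp.\ $\varphi_0((h/\epsilon)\partial_\tau)$--) transport of the datum at $t_n$, plus an integral of the nonlinearity against kernels $\varphi_j((h/\epsilon)\partial_\tau)$. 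Subtracting the scheme \eqref{cs ei-ful2} and using the local assumption $X^n=X(t_n,\tau)$, $V^n=V(t_n,\tau)$ then recasts each remainder in \eqref{remainders} as the discrepancy between this continuous quadrature and the discrete one, applied to $G(X(t_n+\theta h,\tau),V(t_n+\theta h,\tau))$.

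Next I would Taylor expand $G$ along the exact trajectory in $\theta h$ around $t_n$ up to order $r$. The chain rule applied to $G(X,V)=F\bigl(X+\tau\varphi_1(\tau J)V,\varphi_0(\tau J)V\bigr)$, together with $V=O(\epsilon)$, $b\circ q-b_0=O(\epsilon)$ and the derivative bounds $\partial_t^k X,\partial_t^k V=O(\epsilon)$ from Lemma~\ref{LBS}, yields $\partial_t^k G=O(\epsilon)$ in $W_\tau^{1,\infty}$ for $k=0,1,\ldots,r$. Substituting this expansion into both the continuous and discrete quadratures and collecting like powers of $h$ reproduces, coefficient by coefficient, exactly the expressions on the left-hand side of the order conditions in Table~\ref{tab1}. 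By hypothesis, the order-$1,\ldots,r-1$ conditions are operator identities on $W_\tau^{1,\infty}$ and therefore annihilate the corresponding coefficients identically, leaving remainders of nominal size $O(\epsilon h^r)$ in the stage equations and $O(\epsilon h^{r+1})$ in the update.

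The main obstacle is to upgrade these to the claimed $O(\epsilon^2 h^r)$ and $O(\epsilon^2 h^{r+1})$ bounds, which requires a further factor of $\epsilon$. The surviving residuals involve $\psi_r$, $\psi_{r,i}$ and the nested Butcher-type terms of Table~\ref{tab1}, which by hypothesis hold only in the weak form (with $\bar b_i(0)$ in place of $\bar b_i((h/\epsilon)\partial_\tau)$ and $\psi_r(0)=0$). I would split every such residual factor into its $\tau$-average $\Pi$ and its oscillatory complement $(I-\Pi)$: the weak conditions exactly annihilate the $\Pi$-part, because $\psi_r((h/\epsilon)\partial_\tau)$ acts on $\tau$-independent quantities as the scalar $\psi_r(0)=0$. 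For the $(I-\Pi)$-part I would use the factorizations $\psi_r(z)=z\,\tilde\psi_r(z)$ and $\bar b_i(z)-\bar b_i(0)=z\,\tilde b_i(z)$ with bounded $\tilde\psi_r,\tilde b_i$, so that the strong-minus-weak correction carries an extra $(h/\epsilon)\partial_\tau$; combined with the order-$4$ preparation of the initial data via \eqref{inv} and the Chapman--Enskog decomposition underlying Lemma~\ref{LBS}, this extra $\partial_\tau$, paired with the projected equation $\epsilon\,\partial_t(I-\Pi)U+\partial_\tau(I-\Pi)U=\epsilon(I-\Pi)f_\tau(U)$, produces the sought factor of $\epsilon$ for each oscillatory contribution. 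Passage from $L_\tau^\infty$ to $W_\tau^{1,\infty}$ amounts to running the same argument on the $\partial_\tau$-differentiated two-scale equations, whose right-hand sides inherit the same structural smallness; the intermediate stage values are controlled throughout by the preceding boundedness lemma.
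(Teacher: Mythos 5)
Your skeleton --- Duhamel's formula for \eqref{2scale} on $[t_n,t_n+h]$ with propagator $\varphi_0(\cdot h/\epsilon\,\partial_\tau)$, Taylor expansion of the nonlinearity about $t_n$, identification of the coefficients with the $\psi_j$, $\psi_{j,i}$ of Table \ref{tab1}, and the bounds of Lemma \ref{LBS} --- is exactly the paper's route (which itself defers the bookkeeping to the Hochbruck--Ostermann analysis). The divergence, and the genuine gap, is in where the second power of $\epsilon$ comes from. You state $\partial_t^k G=O(\epsilon)$ and then try to recover the missing $\epsilon$ from the oscillatory $(I-\Pi)$ part via the factorization $\psi_r(z)=z\tilde\psi_r(z)$ together with $\partial_\tau(I-\Pi)U=O(\epsilon)$. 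That mechanism does not deliver: the quantity the operator $\partial_\tau$ actually hits is the integrand $\bigl[-\tau\varphi_1(-\tau J)G;\ \varphi_0(-\tau J)G\bigr]$ and its $t$-derivatives, which depend on $\tau$ \emph{explicitly} through the $O(1)$ matrices $\tau\varphi_1(\pm\tau J)$ and $\varphi_0(\pm\tau J)$; differentiating these in $\tau$ gains no power of $\epsilon$, so $(h/\epsilon)\tilde\psi_r\,\partial_\tau$ applied to an $O(\epsilon)$ quantity yields only $O(h)$, and your residual lands at $O(\epsilon h^{r+1})$, one power of $\epsilon$ short of the claim.

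The correct source of the second $\epsilon$, which the paper's display makes visible in the prefactor of $h\epsilon\sum_j h^j\psi_j\frac{d^j}{dt^j}G(t_n)$ combined with \eqref{UBOUND}, is the sharper bound $\frac{d^k}{dt^k}G=O(\epsilon^2)$ for $k\ge1$: each $t$-derivative of $G$ produces, by the chain rule, a factor $F_1$ or $F_2=O(\epsilon)$ (from $b(q(t))-b_0=O(\epsilon)$ in Lemma \ref{lem1}) multiplied by $\partial_t X,\partial_t V=O(\epsilon)$ from Lemma \ref{LBS}. Since the $j=0$ (resp.\ lowest-order) term $h\psi_1 G(t_n)$ is annihilated exactly by the order-one condition $\psi_1=0$, every surviving term in the expansion carries at least one $t$-derivative of $G$ and is therefore already $O(\epsilon^2)$; the weak form of the order-$r$ conditions is then needed only to supply the final power of $h$ (the standard average/oscillatory splitting you describe is the right device for \emph{that} step, with $\psi_r(h/\epsilon\,\partial_\tau)$ reducing to the scalar $\psi_r(0)=0$ on the $\Pi$-part). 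You should replace your estimate $\partial_t^kG=O(\epsilon)$ by $O(\epsilon^2)$ and reassign the roles of the two ingredients accordingly; as written, the argument cannot close to $C\epsilon^2h^{r+1}$.
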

\begin{proof}
  Since the variable $\tau$ plays essentially no role in subsequent computations of the proof, we shall omit it for brevity.
%
From the Duhamel principle, it is clear that
\begin{equation*}
\begin{array}[c]{ll}%
X(t_n+c_ih)&=\varphi_0(c_{i}h/\epsilon \partial_\tau)X(t_n)-\int_0^{c_ih}\fe^{(\theta-c_ih)\frac{\partial_\tau}{\epsilon }}
\tau  \varphi_1( -\tau J )G(X(t_n+\theta), V(t_n+\theta))d\theta,\\
V(t_n+c_ih)&=\varphi_0(c_{i}h/\epsilon \partial_\tau)V(t_n)+\int_0^{c_ih}\fe^{(\theta-c_ih)\frac{\partial_\tau}{\epsilon }}
\varphi_0(- \tau J )G(X(t_n+\theta), V(t_n+\theta))d\theta,\\
X(t_n+h)&=\varphi_0(h/\epsilon \partial_\tau)X(t_n)-  \int_0^{h}\fe^{(\theta-h)\frac{\partial_\tau}{\epsilon }}
\tau  \varphi_1( -\tau J )G(X(t_n+\theta), V(t_n+\theta))d\theta,\\
V(t_n+h)&=\varphi_0(h/\epsilon \partial_\tau)V(t_n)+\int_0^{h}\fe^{(\theta-h)\frac{\partial_\tau}{\epsilon }}
\varphi_0(- \tau J )G(X(t_n+\theta), V(t_n+\theta))d\theta.
\end{array}
\end{equation*}
Subtracting this expression from \eqref{remainders} gives the equations of  remainders
 \begin{equation*}
\begin{aligned}
&[\Delta_X^{ni};\Delta_V^{ni}]=[X(t_n+c_ih);V(t_n+c_ih)]-\Big(\varphi_0(c_{i}h/\epsilon \partial_\tau) \otimes \textmd{diag}(1,1) \Big)
[X(t_n);V(t_n)] \\
&\qquad\qquad\qquad+h\textstyle\sum\limits_{j=1}^{s} \Big(\bar{a}_{ij}(h/\epsilon \partial_\tau) \otimes \textmd{diag}(1,1) \Big)G(t_n+c_jh),\\
&[\delta_X^{n+1}; \delta_V^{n+1}]=[X(t_n+h);V(t_n+h)]-\Big(\varphi_0(h/\epsilon \partial_\tau) \otimes \textmd{diag}(1,1) \Big)
[X(t_n);V(t_n)] \\
&\qquad\qquad\qquad+h\textstyle\sum\limits_{j=1}^{s} \Big(\bar{b}_{j}(h/\epsilon \partial_\tau) \otimes \textmd{diag}(1,1) \Big)G(t_n+c_jh),\\
\end{aligned}
\end{equation*}
where we use the notation
$ {G(t)}:=[-\tau  \varphi_1( -\tau J )
G( {X(t)}, {V(t)});\varphi_0(- \tau J ) G( {X(t)}, {V(t)})]$ and the Kronecker product $\otimes$.
Applying   Taylor expansions, one gets
 \begin{equation*}
\begin{aligned}\   [\delta_X^{n+1}; \delta_V^{n+1}] =& h \epsilon  \int_{0}^1 \Big(\varphi_0((1-\xi)c_ih/\epsilon \partial_\tau) \otimes \textmd{diag}(1,1) \Big)\sum\limits_{j=0}^{\infty}\frac{(\xi c_i h)^j}{j!}\frac{\textmd{d}^j}{\textmd{d} t^j}  {G(t_n) } {\rm d}\xi \\&- h \epsilon  \sum\limits_{k=1}^{s} \Big(\bar{b}_{k}(h/\epsilon \partial_\tau) \otimes \textmd{diag}(1,1) \Big)\sum\limits_{j=0}^{\infty}\frac{ c_k^jh^j}{j!}\frac{\textmd{d}^j}{\textmd{d} t^j} {G(t_n)}\\
=&    h \epsilon  \sum\limits_{j=0}^{\infty}h^j \Big(\psi_j(h/\epsilon \partial_\tau) \otimes \textmd{diag}(1,1) \Big)\frac{\textmd{d}^j}{\textmd{d} t^j} {G(t_n)}.
\end{aligned}
\end{equation*}
Following the analysis of  \cite{Ostermann06},
the bounds of  $ \delta_X^{n+1}$  and $ \delta_V^{n+1}$ given in this lemma are deduced from the stiff order conditions presented in Theorem \ref{UA thm} and the bound \eqref{UBOUND}.  In an analogous way, we proceed for the bound of
$ \Delta_X^{ni}$  and $ \Delta_V^{ni}$, and the
proof of Lemma  \ref{stiff thm} is complete.
%
%
\end{proof}

\textbf{Proof of Theorem \ref{UA thm}.}\begin{proof}
Based on the above preparations,
 we are ready to  derive the error bounds of Theorem \ref{UA thm}.
Define the error functions by
\[
\begin{aligned}
&e_X^n:=X(t_n,\tau)- X^n,\  e_V^n:=V(t_n,\tau)- V^n, \  E_X^{ni}:=X(t_n+c_ih,\tau)-  X^{ni},\  E_V^{ni}:=V(t_n+c_ih,\tau)-  V^{ni}.\end{aligned}
\]
Subtracting the scheme of the method \eqref{cs ei-ful2} from\eqref{remainders} gives the error recursions
 \begin{equation*}
\begin{array}[c]{ll}%
E_X^{ni}&=\varphi_0(c_{i}h/\epsilon \partial_\tau)e_X^n+h\textstyle\sum\limits_{j=1}^{s}\bar{a}_{ij}(h/\epsilon \partial_\tau) \delta G^{nj}+{\Delta_X^{ni}},\\
E_V^{ni}&=\varphi_0(c_{i}h/\epsilon \partial_\tau)e_V^n+h\textstyle\sum\limits_{j=1}^{s}\bar{a}_{ij}(h/\epsilon \partial_\tau) \delta G^{nj}+{\Delta_V^{ni}},\\
e_X^{n+1}&=\varphi_0(h/\epsilon \partial_\tau)e_X^n+ h\textstyle\sum\limits_{j=1}^{s}\bar{b}_{j}(h/\epsilon \partial_\tau) \delta G^{nj}+ {\delta_X^{n+1}},\\
e_V^{n+1}&=\varphi_0(h/\epsilon \partial_\tau)e_V^n+  h\textstyle\sum\limits_{j=1}^{s}\bar{b}_{j}(h/\epsilon \partial_\tau) \delta G^{nj}+ {\delta_V^{n+1}},
\end{array}
\end{equation*}
where $\delta G^{nj}=G(X(t_n+c_jh,\tau), V(t_n+c_jh,\tau))-G(X^{nj}, V^{nj})$. This contributes with
\begin{equation*}
\begin{aligned}
& \normo{ e_{X}^{n+1}} \leq  \normo{e_{X}^{n}}+h  C
\sum\limits_{j=1}^{s}\normo{\delta G^{nj}} + C\epsilon ^2 h^{r+1},\\
& \normo{ e_{V}^{n+1}} \leq  \normo{e_{V}^{n}}+h  C
\sum\limits_{j=1}^{s}\normo{\delta G^{nj}} + C\epsilon ^2 h^{r+1}.
\end{aligned}\end{equation*}
It stems from $F$  that
$ \normo{\delta G^{nj}} \leq C\Big(\epsilon \normo{E_{X}^{nj}
}+\normo{E_{V}^{nj}}\Big), $
and based on which, it is further  arrived
\begin{equation}\label{convergence 3}
\begin{aligned}
& \normo{e_{X}^{n+1}} \leq  \normo{e_{X}^{n}}+h C
\sum\limits_{j=1}^{s}\Big(\epsilon \normo{E_{X}^{nj}
}+\normo{E_{V}^{nj}}\Big) + C\epsilon ^2 h^{r+1},\\
& \normo{e_{V}^{n+1}} \leq  \normo{e_{V}^{n}}+h C
\sum\limits_{j=1}^{s}\Big(\epsilon \normo{E_{X}^{nj}
}+\normo{E_{V}^{nj}}\Big) + C\epsilon ^2 h^{r+1}.
\end{aligned}\end{equation}
Similar result can
be obtained for $E_{X}^{ni}$ and $E_{V}^{ni}$ in an analogous way as follows: \begin{equation*}
\begin{aligned}
& \normo{E_{X}^{ni}} \leq  \normo{e_{X}^{n}}+h C
\sum\limits_{j=1}^{s}\Big(\epsilon \normo{E_{X}^{nj}
}+\normo{E_{V}^{nj}}\Big) + C\epsilon ^2 h^{r},\\
& \normo{E_{V}^{ni}} \leq  \normo{e_{V}^{n}}+h  C
\sum\limits_{j=1}^{s}\Big(\epsilon \normo{E_{X}^{nj}
}+\normo{E_{V}^{nj}}\Big) + C\epsilon ^2 h^{r}.
\end{aligned}\end{equation*}
  Based on the above results, one gets \begin{equation*}
\begin{aligned}
& \sum\limits_{i=1}^{s}\Big(\normo{E_{X}^{ni}} +\normo{E_{V}^{ni}}\Big) \leq   s\normo{e_{X}^{n}}+s\normo{e_{V}^{n}}+2s h C
\sum\limits_{j=1}^{s}\Big(\epsilon \normo{E_{X}^{nj}
}+\normo{E_{V}^{nj}}\Big) + C\epsilon ^2 h^{r}.
\end{aligned}\end{equation*}
 If the
stepsize $h$ satisfies $h\epsilon  \leq \frac{1}{4sC}$,  it is straightforward to show that $$
\sum\limits_{j=1}^{s}\Big(\normo{E_{X}^{nj}
}+\normo{E_{V}^{nj}}\Big)  \leq 2s\normo{e_{X}^{n}}+2s\normo{e_{V}^{n}}+C\epsilon ^2 h^{r}. $$ Inserting this into
\eqref{convergence 3} and using Gronwall inequality eventually leads to
\begin{equation*}
\begin{aligned} & \normo{e_{X}^{n+1}}\leq C\epsilon ^2 h^{r},\qquad  \ \ \ \normo{e_{V}^{n+1}}\leq C\epsilon ^2 h^{r}. \end{aligned}
\end{equation*}
This bound and the transformations of Section \ref{TRA} immediately complete the   proof  of Theorem \ref{UA thm}.
\end{proof}

 \section{Full-discretization and optimal accuracy}\label{sec:3}
  In the present section, by using the Fourier
pseudospectral method in $\tau$, we first  present the full-discretization for solving the CPD   \eqref{charged-particle sts} and then derive its optimal accuracy.

\subsection{Full-discretization}


To use the Fourier  method in the variable $\tau$, we introduce $\tau_l=\frac{2\pi}{N_\tau}l$ with an even positive integer $N_\tau$ and $l\in \mathcal{M}: =\{-N_\tau/2,-N_\tau/2+1,\ldots,N_\tau/2\}$.
 Then the
Fourier spectral method is proposed by considering the trigonometric
polynomials
$$X^{\mathcal{T}}(t,\tau)=\big(X_j^{\mathcal{T}}(t,\tau)\big)_{j=1,2},\qquad \
V^{\mathcal{T}}(t,\tau)=\big(V_j^{\mathcal{T}}(t,\tau)\big)_{j=1,2},$$
with
\begin{equation*}
\begin{array}[c]{ll}
 X_j^{\mathcal{T}}(t,\tau)=\sum\limits_{k\in \mathcal{T}}
 \widehat{X_{k,j}}(t)\mathrm{e}^{\mathrm{i} k \tau },\quad \ \  V_j^{\mathcal{T}}(t,\tau)=\sum\limits_{k\in \mathcal{T}}
 \widehat{V_{k,j}}(t) \mathrm{e}^{\mathrm{i} k \tau },\ \  (t,\tau)\in[0,T]\times
[-\pi,\pi],
\end{array}
\end{equation*}
such that
\begin{equation*}
\left\{
\begin{aligned}
  &\partial_t X^{\mathcal{T}}(t,\tau)+\frac{1}{\eps}\partial_\tau X^{\mathcal{T}}(t,\tau)
  =-  \tau  \varphi_1( -\tau J )F\big( X^{\mathcal{T}}(t,\tau)+ \tau  \varphi_1( \tau J )V^{\mathcal{T}}(t,\tau), \varphi_0( \tau J )V^{\mathcal{T}}(t,\tau)\big),\\
    &\partial_t V^{\mathcal{T}}(t,\tau)+\frac{1}{\eps}\partial_\tau V^{\mathcal{T}}(t,\tau)
    =\varphi_0( -\tau J )F\big( X^{\mathcal{T}}(t,\tau)+ \tau  \varphi_1( \tau J )V^{\mathcal{T}}(t,\tau), \varphi_0( \tau J )V^{\mathcal{T}}(t,\tau)\big),
\end{aligned}\right.
\end{equation*}
where $ \widehat{X_{k,j}}(t)$ and  $ \widehat{V_{k,j}}(t)$  are referred to the discrete Fourier coefficients of $ X_j^{\mathcal{M}}$ and $ V_j^{\mathcal{M}}$, respectively.
Collecting all the coefficients
in $D:=2 (N_{\tau}+1)$ dimensional
coefficient vectors $
\widehat{\mathbf{X}}(t) = (\widehat{X_{k,j}}(t)),\
\widehat{\mathbf{V}}(t) = (\widehat{V_{k,j}} (t))$ implies a system of ordinary differential equations (ODEs)
\begin{equation}\label{2scale Fourier-f}\left\{ \begin{aligned}
&\frac{d}{dt}\widehat{\mathbf{X}}(t)=\mathrm{i}\Omega\widehat{\mathbf{X}}(t)
+\mathcal{F} \Big(\mathbf{S}^{-}F\big(\mathcal{F}^{-1}\widehat{\mathbf{X}}(t)+ \mathbf{S}^{+}\mathcal{F}^{-1}\widehat{\mathbf{V}}(t), \mathbf{C}^{+}\mathcal{F}^{-1}\widehat{\mathbf{V}}(t)\big) \Big),\\
&\frac{d}{dt}\widehat{\mathbf{V}}(t)=\mathrm{i}\Omega\widehat{\mathbf{V}}(t)
+ \mathcal{F}\Big(\mathbf{C}^{-}F\big(\mathcal{F}^{-1}\widehat{\mathbf{X}}(t)+ \mathbf{S}^{+}\mathcal{F}^{-1}\widehat{\mathbf{V}}(t), \mathbf{C}^{+}\mathcal{F}^{-1}\widehat{\mathbf{V}}(t)\big) \Big),
 \end{aligned}\right.
\end{equation}
where  $\mathcal{F}$
 is the discrete Fast Fourier Transform (FFT),
$\mathbf{S}^{\pm}=\textmd{diag}\big(\pm\tau_l  \varphi_1(  \pm \tau_l J )\big)_{l=0,1,\ldots,N_\tau},\ \
\mathbf{C}^{\pm}=\textmd{diag}\big(\varphi_0(\pm\tau_l  J )\big)_{l=0,1,\ldots,N_\tau}$,  and $\Omega=\textmd{diag}(\Omega_1,\Omega_2)$ with
$\Omega_1=\Omega_2:=\frac{1}{\eps}\textmd{diag}\big(\frac{N_{\tau}}{2},
  \frac{N_{\tau}}{2}-1,\ldots,-\frac{N_{\tau}}{2}\big).$

The full-discretization of \eqref{charged-particle sts}  is stated as follows.
\begin{defi}\label{dIUA-PE-F}(\textbf{Fully discrete scheme.})
The initial data  of \eqref{2scale} is derived from \eqref{inv} with $j=4$ and we denote it as $[X^{0};V^{0}]= U^{[4]}(\tau)$.
Choose a time step size $h$ and a positive even number $N_{\tau}$.  The full-discretization of \eqref{charged-particle sts} is defined as follows.
\begin{itemize}
  \item The first step is to 
  compute the initial value of \eqref{2scale Fourier-f} by
  $[\widehat{X^{0}};\widehat{V^{0}}]=[\mathcal{F}( X^{0});\mathcal{F}(V^{0})]$.

   \item  For solving the ODEs  \eqref{2scale Fourier-f} with the initial value $[\widehat{X^{0}};\widehat{V^{0}}]$,
we consider the same $s$-stage two-scale exponential integrator as semi-discretization, that is for $M=\mathrm{i}\Omega$  \begin{equation*}
\begin{array}[c]{ll}%
\widehat{X^{ni}}&=\varphi_0(c_{i}hM)\widehat{X^{n}}+h\textstyle\sum\limits_{j=1}^{s}\bar{a}_{ij}(hM)\mathcal{F} \Big(\mathbf{S}^{-}F\big(\mathcal{F}^{-1}\widehat{X^{nj}} + \mathbf{S}^{+}\mathcal{F}^{-1}\widehat{V^{nj}}, \mathbf{C}^{+}\mathcal{F}^{-1}\widehat{V^{nj}}\big) \Big),\
i=1,2,\ldots,s,\\
\widehat{V^{ni}}&=\varphi_0(c_{i}hM)\widehat{V^{n}}+  h\textstyle\sum\limits_{j=1}^{s}\bar{a}_{ij}(hM)\mathcal{F} \Big(\mathbf{C}^{-}F\big(\mathcal{F}^{-1}\widehat{X^{nj}} + \mathbf{S}^{+}\mathcal{F}^{-1}\widehat{V^{nj}}, \mathbf{C}^{+}\mathcal{F}^{-1}\widehat{V^{nj}}\big) \Big),\
i=1,2,\ldots,s,\\
\widehat{X^{n+1}}&=\varphi_0(hM)\widehat{X^{n}}+ h\textstyle\sum\limits_{j=1}^{s}\bar{b}_{j}(hM)\mathcal{F} \Big(\mathbf{S}^{-}F\big(\mathcal{F}^{-1}\widehat{X^{nj}} + \mathbf{S}^{+}\mathcal{F}^{-1}\widehat{V^{nj}}, \mathbf{C}^{+}\mathcal{F}^{-1}\widehat{V^{nj}}\big) \Big),\\
\widehat{V^{n+1}}&=\varphi_0(hM)\widehat{V^{n}}+  h\textstyle\sum\limits_{j=1}^{s}\bar{b}_{j}(hM)\mathcal{F} \Big(\mathbf{C}^{-}F\big(\mathcal{F}^{-1}\widehat{X^{nj}} + \mathbf{S}^{+}\mathcal{F}^{-1}\widehat{V^{nj}}, \mathbf{C}^{+}\mathcal{F}^{-1}\widehat{V^{nj}}\big) \Big).
\end{array}\end{equation*}

  \item  The full-discretization $x^{n+1}\approx x(t_{n+1})$ and $v^{n+1}\approx v(t_{n+1})$  of  \eqref{charged-particle sts}  is formulated as  \begin{equation*} \begin{aligned}
&x^{n+1}=  \mathbf{X}^{n+1}+  \frac{t_{n+1}}{\epsilon }    \varphi_1(  t_{n+1} J/\epsilon )   \mathbf{V}^{n+1},\ \ v^{n+1}=\frac{1}{\epsilon} \varphi_0(  t_{n+1} J/\epsilon )   \mathbf{V}^{n+1},
 \end{aligned}
\end{equation*}
{where} $\mathbf{X}^{n+1}$ and $\mathbf{V}^{n+1}$ are obtained by the Fourier
pseudospectral method
\begin{equation*}\begin{aligned}
\mathbf{X}^{n+1}=\sum\limits_{\ell\in \mathcal{M}}(\widehat{X^{n+1}})_{\ell}\fe^{\mathrm{i}\ell t_{n+1}/\eps},\qquad \ \mathbf{V}^{n+1}=\sum\limits_{\ell\in \mathcal{M}}(\widehat{V^{n+1}})_{\ell}\fe^{\mathrm{i}\ell  t_{n+1}/\eps}.
 \end{aligned}
\end{equation*}
\end{itemize}
\end{defi}
\subsection{Optimal accuracy}

%
%
%
%

\begin{theo} \label{UA thm2} (\textbf{Optimal accuracy})
For a smooth periodic function $\vartheta(\tau)$ on $\bT$,
denote the Soblev space $\mathcal{H}^{m}(\bT)=\{\vartheta(\tau)\in\mathcal{H}^{m}:\partial^l_{\tau}\vartheta(0)=\partial^l_{\tau}\vartheta(2\pi),l=0,1,\ldots,m \}$.
Assume that the exact solution $X(t,\tau)$ and $V(t,\tau)$ of the system  \eqref{2scale}
satisfy that $X(t,\tau), V(t,\tau)\in \mathbb{C}^r([0,T],\mathcal{H}^{m_0}(\bT))$ with $m_0\geq0.$
Under the conditions  of Theorem \ref{UA thm},   the global error of
the fully discrete scheme
is bounded as
\begin{equation*}
\begin{aligned}
 \normo{X(t_n,\tau)-\mathbf{X}^n}+\normo{\eps V(t_n,\tau)-\eps\mathbf{V}^n} \leq C \big(\eps^2 h^r+ (2\pi/N_{\tau})^{m_0}\big),\quad 0\leq n\leq T/h,\\
\end{aligned}
\end{equation*}
where $C$ is a generic constant independent of $n, h, N_{\tau}, \epsilon$.
\end{theo}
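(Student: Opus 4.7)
The plan is to decompose the global error via the triangle inequality, introducing as an intermediate reference the semi-discrete-in-$\tau$ (but exact-in-$t$) trigonometric solution $X^{\mathcal{T}}(t,\tau), V^{\mathcal{T}}(t,\tau)$ of the pseudospectral projection \eqref{2scale Fourier-f}:
\begin{equation*}
\normo{X(t_n,\tau) - \mathbf{X}^n} \leq \normo{X(t_n,\tau) - X^{\mathcal{T}}(t_n,\tau)} + \normo{X^{\mathcal{T}}(t_n,\tau) - \mathbf{X}^n},
\end{equation*}
and analogously for $V$. The first summand is a pure Fourier spectral error, and the second is the time-stepping error of the exponential integrator run in Fourier space.

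For the spectral part, I would invoke the regularity assumption $X, V \in \mathbb{C}^r([0,T], \mathcal{H}^{m_0}(\bT))$, combined with the uniform-in-$\epsilon$ control supplied by Lemma~\ref{LBS}, so that standard Fourier pseudospectral truncation and interpolation estimates give
\begin{equation*}
\normo{X(t,\tau) - X^{\mathcal{T}}(t,\tau)} + \normo{V(t,\tau) - V^{\mathcal{T}}(t,\tau)} \leq C (2\pi/N_\tau)^{m_0}
\end{equation*}
uniformly on $[0,T]$, with $C$ independent of $\epsilon$ and $h$.

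For the temporal part, I would exploit the key observation that on the subspace of trigonometric polynomials of degree at most $N_\tau/2$ the operator $\mathrm{i}\Omega$ is precisely the Fourier-side representation of $\partial_\tau/\epsilon$, so the scheme in Definition~\ref{dIUA-PE-F} is structurally the restriction of the semi-discretization \eqref{cs ei-ful2} to that finite-dimensional subspace. Consequently the four auxiliary results underlying Theorem~\ref{UA thm} transfer to the projected system with essentially no change: a priori bounds on $X^{\mathcal{T}}$ and $V^{\mathcal{T}}/\epsilon$, bounds of order $O(\epsilon)$ on the $t$-derivatives of order $k\geq 1$, boundedness of the numerical iterates (as in the boundedness statement of Lemma~\ref{stiff thm}), and local remainders of size $O(\epsilon^2 h^{r+1})$ obtained from the same stiff-order conditions of Table~\ref{tab1}. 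A Gronwall recursion identical to the one closing the proof of Theorem~\ref{UA thm} then yields $\normo{X^{\mathcal{T}}(t_n,\tau) - \mathbf{X}^n} + \normo{V^{\mathcal{T}}(t_n,\tau) - \mathbf{V}^n} \leq C \epsilon^2 h^r$, and combining with the spectral bound gives the stated estimate.

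The main obstacle is verifying that the pseudospectral projection preserves the two properties on which the semi-discrete proof rests: the $O(\epsilon)$ magnitude of $F$ on the projected state, and the uniform boundedness (in $h, \epsilon, N_\tau$) of the operator coefficients $\bar{a}_{ij}(hM), \bar{b}_j(hM)$ with $M=\mathrm{i}\Omega$. The latter is automatic because $M$ is skew-Hermitian, so the $\varphi_j(hM)$ have spectral norm bounded by $\normo{\varphi_j}$ on the imaginary axis; the former requires that trigonometric interpolation be uniformly bounded on $\mathcal{H}^{m_0}(\bT)$ (which holds for $m_0 > 1/2$), so that the Lipschitz bounds \eqref{ASSU1} and the $O(\epsilon)$ bound of Lemma~\ref{lem1} survive under projection. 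Once these two points are in place the recursion closes and the triangle-inequality decomposition delivers the result.
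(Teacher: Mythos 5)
Your proposal is correct and follows essentially the same route as the paper: a triangle-inequality decomposition isolating a spectral error of size $C(2\pi/N_\tau)^{m_0}$ from a temporal error of size $C\eps^2 h^r$, the latter obtained by transferring the remainder bounds and Gronwall recursion of Theorem~\ref{UA thm} to the Fourier-space system with $M=\mathrm{i}\Omega$. The only cosmetic difference is your choice of intermediate reference (the exact-in-time solution of the pseudospectral system \eqref{2scale Fourier-f} rather than the paper's ``intermediate algorithm'' built on the Galerkin-truncated system \eqref{2scale Fourier-f-f} and the projection $P_{\mathcal{M}}$ of the exact solution), and you in fact flag more carefully than the paper the two points on which the transfer rests.
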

\begin{proof}To prove this result, we introduce an intermediate algorithm (IA) and by which,
the conclusions of this theorem are  converted to the estimations for IA.
To this end,  consider the following trigonometric
polynomials
$$X^{\mathcal{M}}(t,\tau)=\big(X_j^{\mathcal{M}}(t,\tau)\big)_{j=1,2},\quad \quad
V^{\mathcal{M}}(t,\tau)=\big(V_j^{\mathcal{M}}(t,\tau)\big)_{j=1,2},$$
with
\begin{equation*}
\begin{array}[c]{ll}
 X_j^{\mathcal{M}}(t,\tau)=\sum\limits_{k\in \mathcal{M}}
 \widetilde{X_{k,j}}(t)\mathrm{e}^{\mathrm{i} k \tau },\ \ \ \  V_j^{\mathcal{M}}(t,\tau)=\sum\limits_{k\in \mathcal{M}}
 \widetilde{V_{k,j}}(t)\mathrm{e}^{\mathrm{i} k \tau },\ \ \  (t,\tau)\in[0,T]\times
[0,2\pi],
\end{array}
\end{equation*}
such that
\begin{equation*}
\begin{array}[c]{ll}
  &\partial_tX^{\mathcal{M}}(t,\tau)+\frac{1}{\epsilon }\partial_\tau X^{\mathcal{M}}(t,\tau)=-  \tau  \varphi_1( -\tau J ) F\big( X^{\mathcal{M}}(t,\tau)+ \tau  \varphi_1( \tau J )V^{\mathcal{M}}(t,\tau), \varphi_0( \tau J )V^{\mathcal{M}}(t,\tau)\big),\\
    &\partial_tV^{\mathcal{M}}(t,\tau)+\frac{1}{\epsilon }\partial_\tau V^{\mathcal{M}}(t,\tau)=\varphi_0( -\tau J )F\big( X^{\mathcal{M}}(t,\tau)+ \tau  \varphi_1( \tau J )V^{\mathcal{M}}(t,\tau), \varphi_0( \tau J )V^{\mathcal{M}}(t,\tau)\big).
\end{array}
\end{equation*}
Here  $\widetilde{X_{k,j}}$ and $\widetilde{V_{k,j}}$
are the Fourier transform
coefficients of the periodic functions $ X_j^{\mathcal{M}}$  and $ V_j^{\mathcal{M}}$, respectively.
{According to the Fourier
functions' orthogonality and collecting all the $\widetilde{X_{k,j}},\
\widetilde{V_{k,j}}$ in   $(N_\tau+1)$-periodic coefficient vectors $
\widetilde{\mathbf{X}}(t)= (\widetilde{X_{k,j}}(t)),\
\widetilde{\mathbf{V}}(t) = (\widetilde{V_{k,j} }(t))$, one gets}
\begin{equation}\label{2scale Fourier-f-f}\begin{aligned}
&\frac{d}{dt}\widetilde{\mathbf{X}}(t)=\mathrm{i}\Omega\widetilde{\mathbf{X}}(t)
+\mathcal{F} \Big(\mathbf{S}^{-}F\big(\mathcal{F}^{-1}\widetilde{\mathbf{X}}(t)+ \mathbf{S}^{+}\mathcal{F}^{-1}\widetilde{\mathbf{V}}(t), \mathbf{C}^{+}\mathcal{F}^{-1}\widetilde{\mathbf{V}}(t)\big) \Big),\\
&\frac{d}{dt}\widetilde{\mathbf{V}}(t)=\mathrm{i}\Omega\widetilde{\mathbf{V}}(t)
+ \mathcal{F}\Big(\mathbf{C}^{-}F\big(\mathcal{F}^{-1}\widetilde{\mathbf{X}}(t)+ \mathbf{S}^{+}\mathcal{F}^{-1}\widetilde{\mathbf{V}}(t), \mathbf{C}^{+}\mathcal{F}^{-1}\widetilde{\mathbf{V}}(t)\big) \Big).
 \end{aligned}
\end{equation}
Then  the intermediate algorithm (IA)  is defined by
  \begin{equation*}
\begin{aligned}
&X_{\mathcal{M},j}^{ni}(\tau)=\sum\limits_{k\in \mathcal{M}}
\widetilde{X^{ni}_{k,j}}\mathrm{e}^{\mathrm{i} k \tau },\quad \  \ V_{\mathcal{M},j}^{ni}(\tau)=\sum\limits_{k\in \mathcal{M}}
\widetilde{V^{ni}_{k,j}}\mathrm{e}^{\mathrm{i} k \tau },\ \ \ \ \ i=1,2,\ldots,s,\\
&X_{\mathcal{M},j}^{n+1}(\tau)=\sum\limits_{k\in \mathcal{M}}
\widetilde{X^{n+1}_{k,j}}\mathrm{e}^{\mathrm{i} k \tau },\quad V_{\mathcal{M},j}^{n+1}(\tau)=\sum\limits_{k\in \mathcal{M}}
\widetilde{V^{n+1}_{k,j}}\mathrm{e}^{\mathrm{i} k \tau },\ \ \  n=0,1,\ldots, T/h-1,
\end{aligned}
\end{equation*}
 where the following $s$-stage exponential integrator is applied  to  solving \eqref{2scale Fourier-f-f}:
\begin{equation}
\begin{array}[c]{ll}%
\widetilde{X^{ni}}&=\varphi_0(c_{i}hM)\widetilde{X^{n}}+h\textstyle\sum\limits_{j=1}^{s}\bar{a}_{ij}(hM)\mathcal{F} \Big(\mathbf{S}^{-}F\big(\mathcal{F}^{-1}\widetilde{X^{nj}} + \mathbf{S}^{+}\mathcal{F}^{-1}\widetilde{V^{nj}}, \mathbf{C}^{+}\mathcal{F}^{-1}\widetilde{V^{nj}}\big) \Big),\
i=1,2,\ldots,s,\\
\widetilde{V^{ni}}&=\varphi_0(c_{i}hM)\widetilde{V^{n}}+  h\textstyle\sum\limits_{j=1}^{s}\bar{a}_{ij}(hM)\mathcal{F} \Big(\mathbf{C}^{-}F\big(\mathcal{F}^{-1}\widetilde{X^{nj}} + \mathbf{S}^{+}\mathcal{F}^{-1}\widetilde{V^{nj}}, \mathbf{C}^{+}\mathcal{F}^{-1}\widetilde{V^{nj}}\big) \Big),\
i=1,2,\ldots,s,\\
\widetilde{X^{n+1}}&=\varphi_0(hM)\widetilde{X^{n}}+ h\textstyle\sum\limits_{j=1}^{s}\bar{b}_{j}(hM)\mathcal{F} \Big(\mathbf{S}^{-}F\big(\mathcal{F}^{-1}\widetilde{X^{nj}} + \mathbf{S}^{+}\mathcal{F}^{-1}\widetilde{V^{nj}}, \mathbf{C}^{+}\mathcal{F}^{-1}\widetilde{V^{nj}}\big) \Big),\\
\widetilde{V^{n+1}}&=\varphi_0(hM)\widetilde{V^{n}}+  h\textstyle\sum\limits_{j=1}^{s}\bar{b}_{j}(hM)\mathcal{F} \Big(\mathbf{C}^{-}F\big(\mathcal{F}^{-1}\widetilde{X^{nj}} + \mathbf{S}^{+}\mathcal{F}^{-1}\widetilde{V^{nj}}, \mathbf{C}^{+}\mathcal{F}^{-1}\widetilde{V^{nj}}\big) \Big).
\end{array}
 \label{erk dingyi}%
\end{equation}

To study the accuracy of the fully discrete scheme, consider the standard projection operator  $P_{\mathcal{M}} : L^2([-\pi,\pi]) \rightarrow
Y_{\mathcal{M}}:=\textmd{span}\{e^{ \mathrm{i}   k \tau },\ k\in \mathcal{M},\ \tau \in[-\pi,\pi]\}
$ as
$(P_{\mathcal{M}}v)(\tau)=\sum\limits_{k\in \mathcal{M}}\widetilde{v_k} e^{ \mathrm{i}k \tau }.$
With the notations $\mathbf{X}^{ni}=\sum\limits_{\ell\in \mathcal{M}}(\widehat{X^{ni}})_{\ell}\fe^{\mathrm{i}\ell (t_{n}+c_ih)/\eps}$ and $\mathbf{V}^{ni}=\sum\limits_{\ell\in \mathcal{M}}(\widehat{V^{ni}})_{\ell}\fe^{\mathrm{i}\ell (t_{n}+c_ih)/\eps}$,
define the error functions of our fully discrete scheme by
\[
\begin{aligned}
&e_X^n(\tau):=X(t_n,\tau)-\mathbf{X}^n,\quad \ \ \ E_X^{ni}(\tau):=X(t_n+c_ih,\tau)-\mathbf{X}^{ni},\\
&e_V^n(\tau):=V(t_n,\tau)-\mathbf{V}^n,\quad \ \ \ \   E_V^{ni}(\tau):=V(t_n+c_ih,\tau)-\mathbf{V}^{ni},\end{aligned}
\]
and the projected errors of   the intermediate algorithm as
\[
\begin{aligned}
&e_{\mathcal{M},X}^n(\tau):=P_{\mathcal{M}} X(t_n,\tau)-X_{\mathcal{M}}^n,\quad \ \ \ E_{\mathcal{M},X}^{ni}(\tau):=P_{\mathcal{M}} X(t_n+c_ih,\tau)-X_{\mathcal{M}}^{ni},\\
&e_{\mathcal{M},V}^n(\tau):=P_{\mathcal{M}} V(t_n,\tau)-V_{\mathcal{M}}^n,\quad \ \ \ \ E_{\mathcal{M},V}^{ni}(\tau):=P_{\mathcal{M}} V(t_n+c_ih,\tau)-V_{\mathcal{M}}^{ni}.\\\end{aligned}
\] Based on the estimates on projection error \cite{Shen} and the
triangle inequality, it follows that
\[
\begin{aligned}  \normo{e_X^n}&\leq  \normo{e_{\mathcal{M},X}^n}+\normo{ X_{\mathcal{M}}^n-\mathbf{X}^n}+ \normo{X(t_n,\tau)-P_{\mathcal{M}} X(t_n,\tau)}  \leq \normo{e_{\mathcal{M},X}^n}   +C (2\pi/N_{\tau})^{m_0},\\
\normo{E_X^{ni}}&\leq  \normo{E_{\mathcal{M},X}^{ni}}+ \normo{ X_{\mathcal{M}}^{ni}-\mathbf{X}^{ni}}+
\normo{X(t_n+c_i h,\tau)-P_{\mathcal{M}} X(t_n+c_i h,\tau)} \\&\leq \normo{E_{\mathcal{M},X}^{ni}}+ C(2\pi/N_{\tau})^{m_0}.
\end{aligned}
\]
Similar results can be derived for $e_V^n,\ E_V^{ni}$.
Therefore, the estimations for $e_X^n, e_V^n$ and  $E_X^{ni},\ E_V^{ni}$  can be turned to estimate the estimations for  $e_{\mathcal{M},X}^n, e_{\mathcal{M},V}^n$ and $E_{\mathcal{M},X}^{ni},\ E_{\mathcal{M},V}^{ni}$.

The error system of the intermediate algorithm is given by
  \begin{equation*}
\begin{aligned}
 &e_{\mathcal{M},X}^{n+1}(\tau)=\sum\limits_{k\in \mathcal{M}}
\big(\widetilde{e_{\mathcal{M},X}^{n+1}}\big)_k\mathrm{e}^{\mathrm{i} k \tau },\quad\quad
E_{\mathcal{M},X}^{ni}(\tau)=\sum\limits_{k\in \mathcal{M}}
\big(\widetilde{E_{\mathcal{M},X}^{ni}}\big)_k\mathrm{e}^{\mathrm{i} k \tau },\\
 &e_{\mathcal{M},V}^{n+1}(\tau)=\sum\limits_{k\in \mathcal{M}}
\big(\widetilde{e_{\mathcal{M},V}^{n+1}}\big)_k\mathrm{e}^{\mathrm{i} k \tau },\quad\quad
E_{\mathcal{M},V}^{ni}(\tau)=\sum\limits_{k\in \mathcal{M}}
\big(\widetilde{E_{\mathcal{M},V}^{ni}}\big)_k\mathrm{e}^{\mathrm{i} k \tau },
\end{aligned}
\end{equation*}
where
\begin{equation*}
\begin{aligned}
\widetilde{e_{\mathcal{M},X}^{n+1}}&=\varphi_0(hM)\widetilde{e_{\mathcal{M},X}^{n}}+h \sum\limits_{j=1}^{s}\bar{b}_{j}(hM) \Delta\widetilde{f^{nj}_X}+\widetilde{\delta_X^{n+1}},\
\widetilde{E_{\mathcal{M},X}^{ni}}=\varphi_0(c_ihM)\widetilde{e_{\mathcal{M},X}^{n}}+ h \textstyle\sum\limits_{j=1}^{s}\bar{a}_{ij}(hM) \Delta\widetilde{f^{nj}_X}+\widetilde{\Delta_X^{ni}},\\
\widetilde{e_{\mathcal{M},V}^{n+1}}&=\varphi_0(hM)\widetilde{e_{\mathcal{M},V}^{n}}+h \sum\limits_{j=1}^{s}\bar{b}_{j}(hM) \Delta\widetilde{f^{nj}_V}+\widetilde{\delta_V^{n+1}},\
\widetilde{E_{\mathcal{M},V}^{ni}}=\varphi_0(c_ihM)\widetilde{e_{\mathcal{M},V}^{n}}+ h \textstyle\sum\limits_{j=1}^{s}\bar{a}_{ij}(hM) \Delta\widetilde{f^{nj}_V}+\widetilde{\Delta_V^{ni}},\\
\end{aligned}
\end{equation*}
{and}
\begin{equation*}
\begin{aligned} &\widetilde{G}( {X}, {V})=F\big(\mathcal{F}^{-1}X + \mathbf{S}^{+}\mathcal{F}^{-1}V, \mathbf{C}^{+}\mathcal{F}^{-1}V\big) ,\\
&\Delta\widetilde{f^{nj}_X}=\mathcal{F}  \mathbf{S}^{-} \big(\widetilde{G}(P_{\mathcal{M}} X(t_n+c_jh,\tau),P_{\mathcal{M}} V(t_n+c_jh,\tau)) -\widetilde{G}(X_{\mathcal{M}}^{nj},V_{\mathcal{M}}^{nj})\big),\\
&\Delta\widetilde{f^{nj}_V}=  \mathcal{F}  \mathbf{C}^{-} \big(\widetilde{G}(P_{\mathcal{M}} X(t_n+c_jh,\tau),P_{\mathcal{M}} V(t_n+c_jh,\tau)) -\widetilde{G}(X_{\mathcal{M}}^{nj},V_{\mathcal{M}}^{nj})\big).
\end{aligned}
\end{equation*}
 Here the remainders
$\widetilde{\delta_X^{n+1}},\ \widetilde{\Delta_X^{ni}}$  and
$\widetilde{\delta_V^{n+1}},\ \widetilde{\Delta_V^{ni}}$ are determined by  inserting the exact solution of \eqref{2scale Fourier-f-f}
into the numerical approximation  \eqref{erk dingyi}, {i.e.,}
\begin{equation*}
\begin{array}[c]{ll}%
\widetilde{\mathbf{X}(t_n+c_ih)}&=\varphi_0(c_{i}hM)\widetilde{\mathbf{X}(t_n)}+h\textstyle\sum\limits_{j=1}^{s}\bar{a}_{ij}(hM)
\mathcal{F}  \mathbf{S}^{-} \widetilde{G}(\widetilde{\mathbf{X}(t_n+c_jh)},\widetilde{\mathbf{V}(t_n+c_jh)})+ \widetilde{\Delta_X^{ni}},\\
\widetilde{\mathbf{V}(t_n+c_ih)}&=\varphi_0(c_{i}hM)\widetilde{\mathbf{V}(t_n)}+  h\textstyle\sum\limits_{j=1}^{s}\bar{a}_{ij}(hM)
\mathcal{F}  \mathbf{C}^{-} \widetilde{G}(\widetilde{\mathbf{X}(t_n+c_jh)},\widetilde{\mathbf{V}(t_n+c_jh)})+ \widetilde{\Delta_V^{ni}}, \\
\widetilde{\mathbf{X}(t_n+h)}&=\varphi_0(hM)\widetilde{\mathbf{X}(t_n)}+ h\textstyle\sum\limits_{j=1}^{s}\bar{b}_{j}(hM)
\mathcal{F}  \mathbf{S}^{-} \widetilde{G}(\widetilde{\mathbf{X}(t_n+c_jh)},\widetilde{\mathbf{V}(t_n+c_jh)})+\widetilde{\delta_X^{n+1}},\\
\widetilde{\mathbf{V}(t_n+h)}&=\varphi_0(hM)\widetilde{\mathbf{V}(t_n)}+ h\textstyle\sum\limits_{j=1}^{s}\bar{b}_{j}(hM)
\mathcal{F}  \mathbf{C}^{-} \widetilde{G}(\widetilde{\mathbf{X}(t_n+c_jh)},\widetilde{\mathbf{V}(t_n+c_jh)})+\widetilde{\delta_V^{n+1}}.
\end{array}
\end{equation*}
Following the same arguments of Lemma \ref{LBS}, the bounds of these remainders are derived as
\begin{equation*}
\begin{aligned} & \normm{\widetilde{\Delta_X^{ni}}}\leq C \epsilon ^2 h^{r},\  \normm{\widetilde{\Delta_V^{ni}}}\leq C \epsilon ^2
h^{r}, \  \normm{\widetilde{\delta_X^{n+1}}}\leq C \epsilon ^2 h^{r+1},\  \normm{ \widetilde{\delta_V^{n+1}}}\leq C \epsilon ^2
h^{r+1}.\end{aligned} 
\end{equation*}
Based on the foregoing estimates, we deduce that
\begin{equation*}
\begin{aligned}
&\normo{ e_{\mathcal{M},X}^{n+1}} \leq\normo{ e_{\mathcal{M},X}^{n}}+h  C
\sum\limits_{j=1}^{s}\normo{\Delta\widetilde{f^{nj}_X}} + C\eps^2 h^{r+1},\\
& \normo{ e_{\mathcal{M},V}^{n+1}} \leq\normo{ e_{\mathcal{M},V}^{n}}+h C
\sum\limits_{j=1}^{s}\normo{\Delta\widetilde{f^{nj}_V}} + C\eps^2 h^{r+1}.
\end{aligned}\end{equation*}
The proof is, then, concluded by   using the same arguments as Theorem \ref{UA thm}.
\end{proof}

\section{Practical integrators and numerical tests}\label{sec:4}
\subsection{Some practical integrators}
Before the above discretization is applied in practical computations,  the coefficients $c_i$, $\bar{a}_{ij}(h/\epsilon \partial_\tau)$ and
$\bar{b}_{i}(h/\epsilon \partial_\tau)$ appearing in \eqref{cs ei-ful2} should be determined, which is derived in this present section.

\textbf{Second order integrator}.
  We first consider second order integrators which can be realized by one-stage schemes, i.e., $s=1$. The first method is obtained  by considering
$$c_1=\frac{1}{2}, \quad\bar{b}_{1}(h/\epsilon \partial_\tau)=   \varphi_1(h/\epsilon \partial_\tau),\quad
\bar{a}_{11}(h/\epsilon \partial_\tau)=\varphi_1(c_1h/\epsilon \partial_\tau) ,$$
which yields an implicit integrator. We shall refer to it as   \textbf{IO2}. To get an explicit scheme, we modify the scheme \eqref{cs ei-ful2} of IO2 as
\begin{equation*}
\begin{array}[c]{ll}%
X^{n1}&=\varphi_0(c_{1}h/\epsilon \partial_\tau)X^{n}-h \bar{a}_{11}(h/\epsilon \partial_\tau)\tau  \varphi_1( -\tau J )F\big( X^{n} +\tau  \varphi_1( \tau J ) V^{n},\varphi_0( \tau J )V^{n}\big) ,\\
V^{n1}&=\varphi_0(c_{1}h/\epsilon \partial_\tau)V^{n}+h \bar{a}_{11}(h/\epsilon \partial_\tau)\varphi_0(- \tau J )F\big( X^{n} +\tau  \varphi_1( \tau J ) V^{n},\varphi_0( \tau J )V^{n}\big),\\
X^{n+1}&=\varphi_0(h/\epsilon \partial_\tau)X^{n}- h\bar{b}_{1}(h/\epsilon \partial_\tau)\tau  \varphi_1( -\tau J )F\big( X^{n1} +\tau  \varphi_1( \tau J ) V^{n1},\varphi_0( \tau J )V^{n1}\big),\\
V^{n+1}&=\varphi_0(h/\epsilon \partial_\tau)V^{n}+  h\bar{b}_{1}(h/\epsilon \partial_\tau)\varphi_0(- \tau J )F\big( X^{n1} +\tau  \varphi_1( \tau J ) V^{n1},\varphi_0( \tau J )V^{n1}\big).
\end{array}\end{equation*}
This explicit  second order method is denoted by  \textbf{EO2}. We note that for these two methods,    the second order  initial data, i.e.,  \eqref{inv} with $k=2$, is enough.

\textbf{Fourth order integrator}.
We now turn to the fourth order integrators with the fourth order  initial data which is obtained by \eqref{inv} with $k=4$.
This can be achieved by three-stage implicit integrators. Solving
$\psi_{i}(h/\epsilon \partial_\tau)=0$ and $\psi_{j,i}(h/\epsilon \partial_\tau)=0$ for $i,j=1,2,3,
$ and choosing $c_1=1,\   c_2=1/2,\  c_3=0$ yields
  \begin{equation*}
\begin{aligned} &\bar{a}_{31}(\Upsilon)=\bar{a}_{32}(\Upsilon)=\bar{a}_{33}(\Upsilon)=0,\qquad \qquad  \quad \ \  \ \
\bar{a}_{21}(\Upsilon)=-\frac{1}{4} \varphi_2(c_2\Upsilon) + \frac{1}{2}\varphi_3(c_2\Upsilon) ,\\
&\bar{a}_{22}(\Upsilon)=\varphi_2(c_2\Upsilon) -\varphi_3(c_2\Upsilon),\qquad   \qquad   \qquad  \quad \bar{a}_{23}(\Upsilon)=\frac{1}{2}\varphi_1(c_2\Upsilon)-\frac{3}{4}\varphi_2(c_2\Upsilon) +\frac{1}{2}\varphi_3(\Upsilon),\\
&\bar{a}_{11}(\Upsilon)=\bar{b}_{1}(\Upsilon)= 4\varphi_3(\Upsilon)  - \varphi_2(\Upsilon) ,\qquad  \quad \quad \ \
 \bar{a}_{12}(\Upsilon)=\bar{b}_{2}(\Upsilon)=4\varphi_2(\Upsilon) -8\varphi_3(\Upsilon),\\
 &\bar{a}_{13}(\Upsilon)=\bar{b}_{3}(\Upsilon)=\varphi_1(\Upsilon)-3\varphi_2(\Upsilon) +4\varphi_3(\Upsilon),
\end{aligned}
\end{equation*}
with $\Upsilon=h/\epsilon \partial_\tau.$
 It can be checked that these coefficients satisfy all the fourth stiff order conditions presented in Table \ref{tab1}.
 This implicit integrator of order four is referred as \textbf{IO4}. For explicit  examples, we need to consider $s=5$ and  choose the  coefficients (\cite{Ostermann06})
 \begin{equation*}
\begin{array}
[c]{ll}%
&c_1=0,\ \ \qquad\qquad\quad\ c_2=c_3=c_5=\frac{1}{2}, \qquad\  c_4=1,\\
&a_{2,1}=\frac{1}{2}\varphi_{1,2},\qquad  \ \quad a_{3,1}=\frac{1}{2}\varphi_{1,3}-\varphi_{2,3},
\quad \ a_{3,2}=\varphi_{2,3},\\
&a_{4,1}= \varphi_{1,4}-2\varphi_{2,4},\ \ a_{4,2}= a_{4,3}=\varphi_{2,4},\qquad
a_{5,1}= \frac{1}{2}\varphi_{1,5}-2a_{5,2}-a_{5,4},\\ &a_{5,2}=  \frac{1}{2}\varphi_{2,5}-\varphi_{3,4}+\frac{1}{2}\varphi_{2,4}-\frac{1}{2}\varphi_{3,5},\qquad
\ \ \ \ a_{5,3}=  a_{5,2},\ \ a_{5,4}=  \frac{1}{2}\varphi_{2,5}-\varphi_{5,2},\\
&b_{1}=\varphi_{1}-3\varphi_{2}+4\varphi_{3},\ \ b_{2}=b_{3}=0,\qquad\ \ \ \ \ \ b_{4}=-\varphi_{2}+4\varphi_{3},\ \ b_{5}=4\varphi_{2}-8\varphi_{3},\\
\end{array}
\end{equation*}
where $\varphi_{i,j}=\varphi_{i,j}(\Upsilon)=\varphi_{i}(c_j\Upsilon).$
 This integrator is referred as \textbf{EO4}.

We end this section by noting that, with the definition of symmetric methods \cite{hairer2006}, it can be verified that the above two explicit schemes (EO2 and EO4) are not symmetric but the implicit ones (IO2 and IO4) are symmetric.

\subsection{Numerical experiment}\label{subNT}
\begin{figure}[t!]
$$\begin{array}{cc}
\psfig{figure=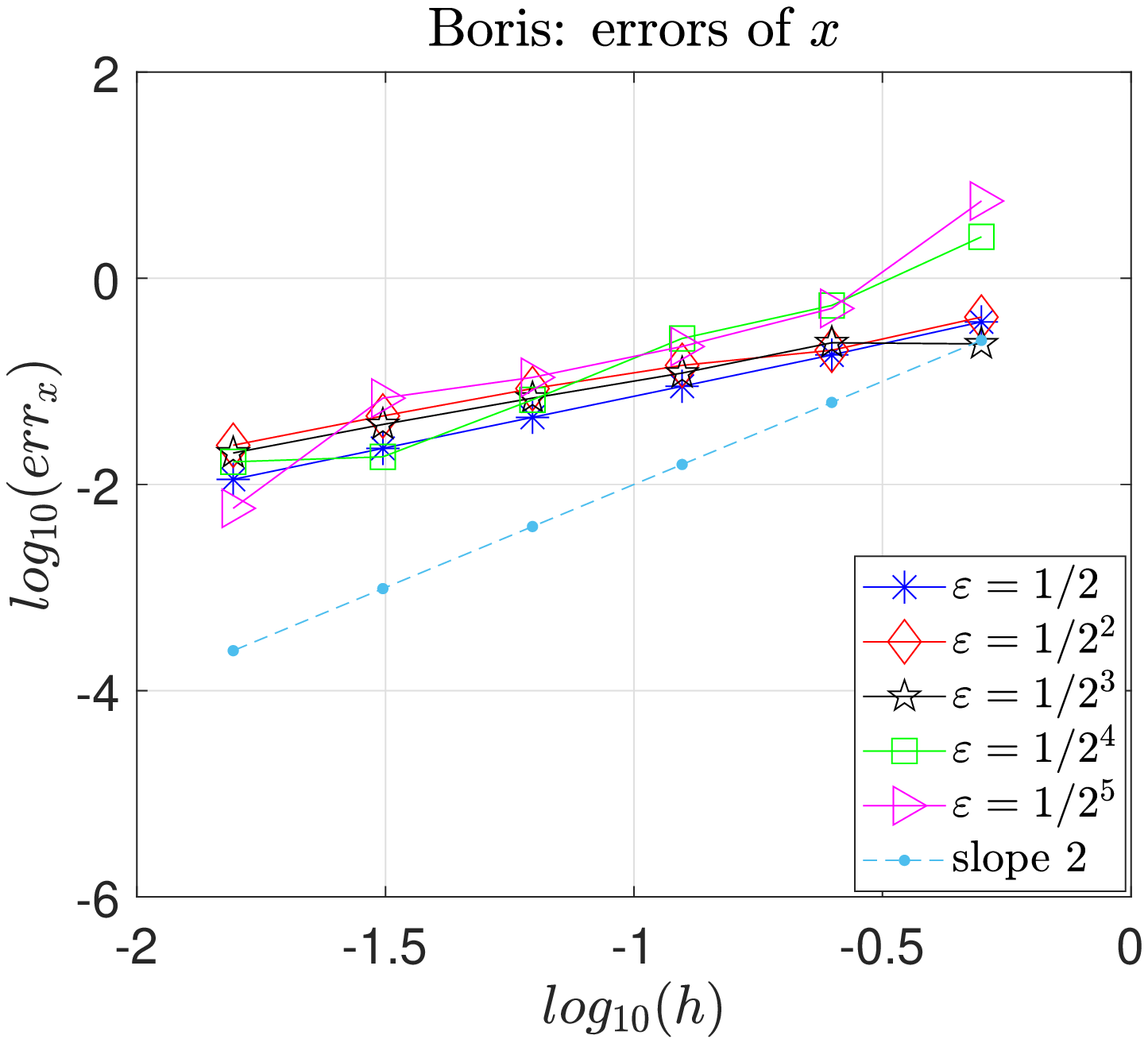,height=4.0cm,width=4.8cm}
\psfig{figure=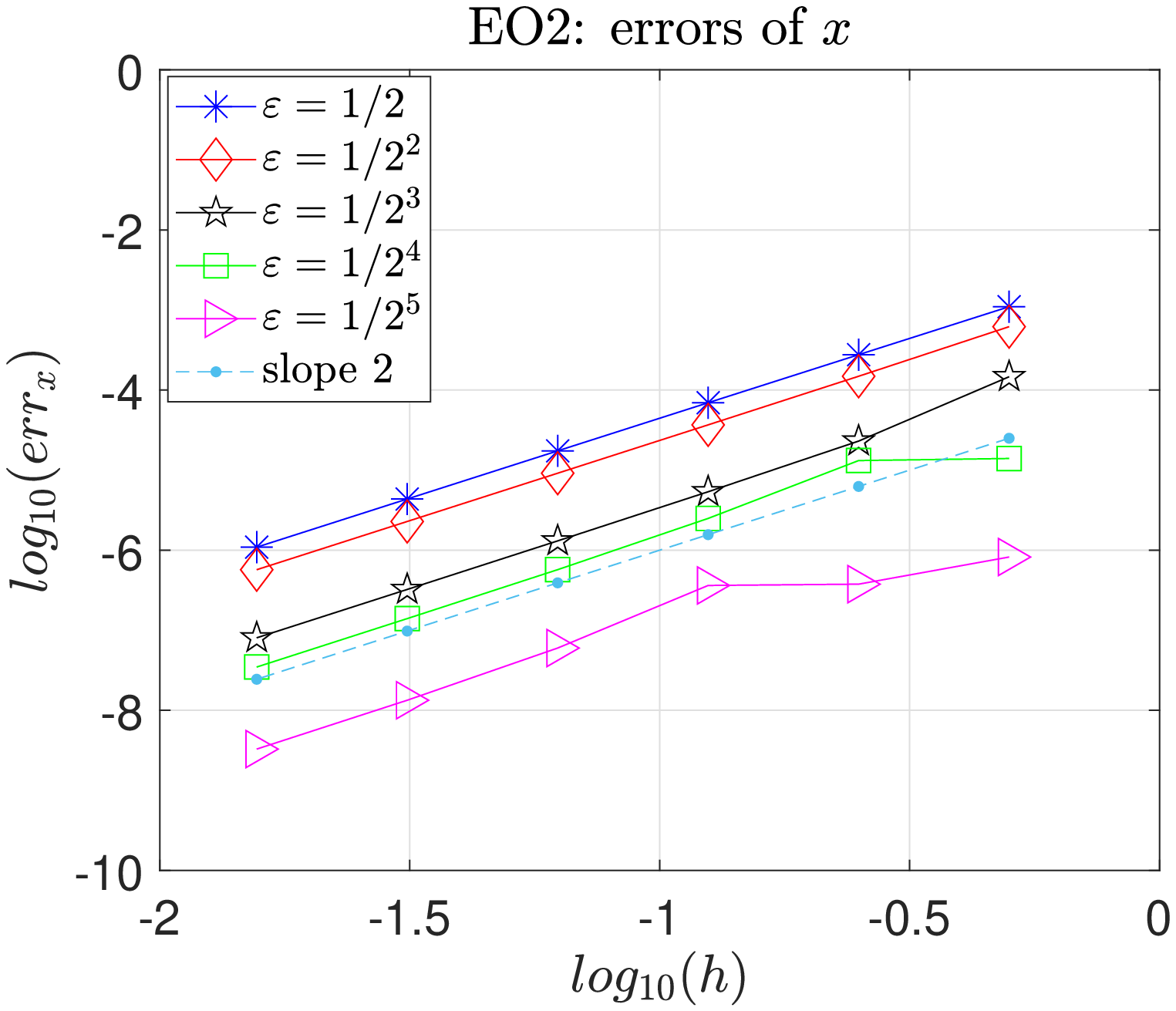,height=4.0cm,width=4.8cm}
\psfig{figure=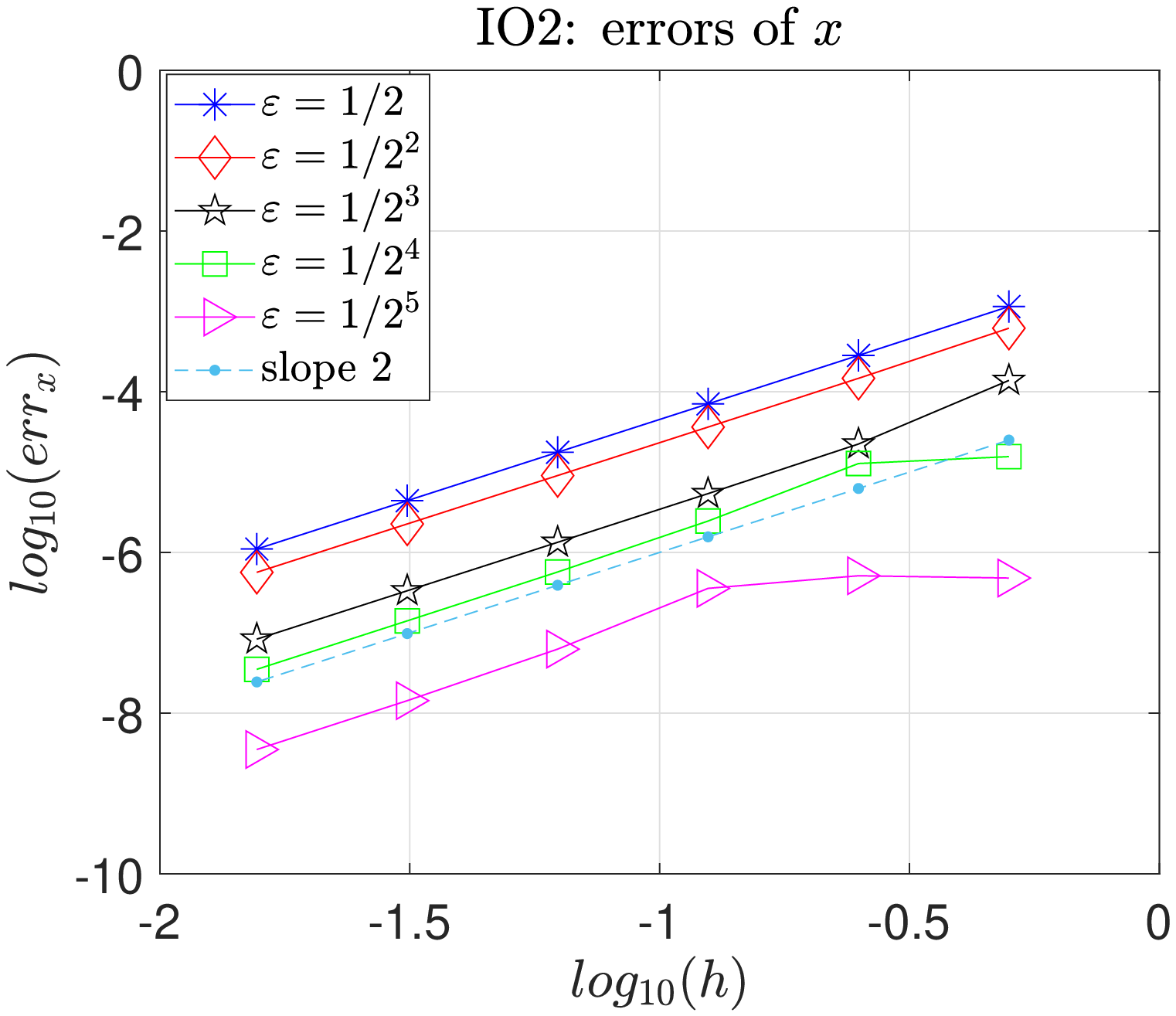,height=4.0cm,width=4.8cm}\\
\psfig{figure=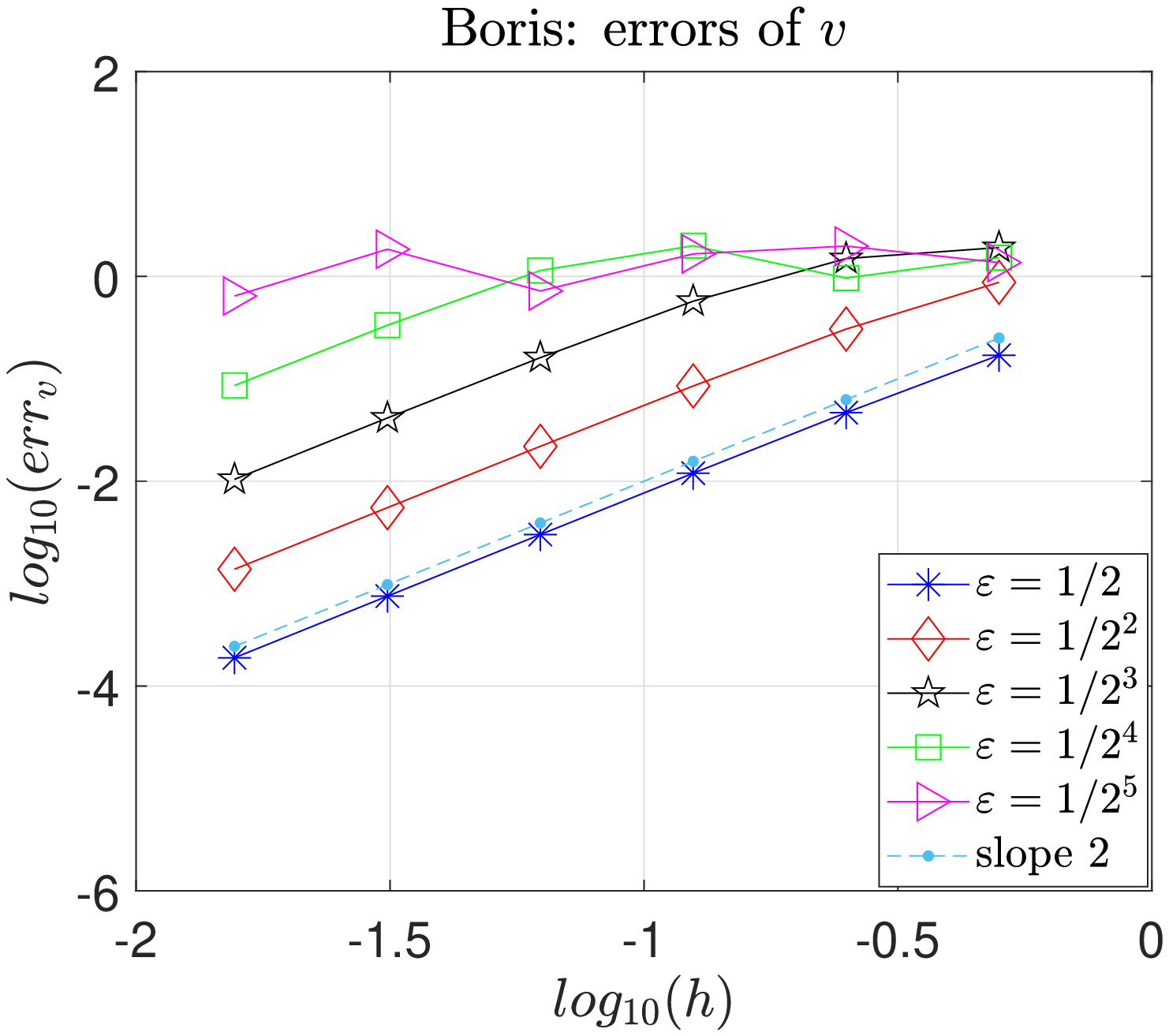,height=4.0cm,width=4.8cm}
\psfig{figure=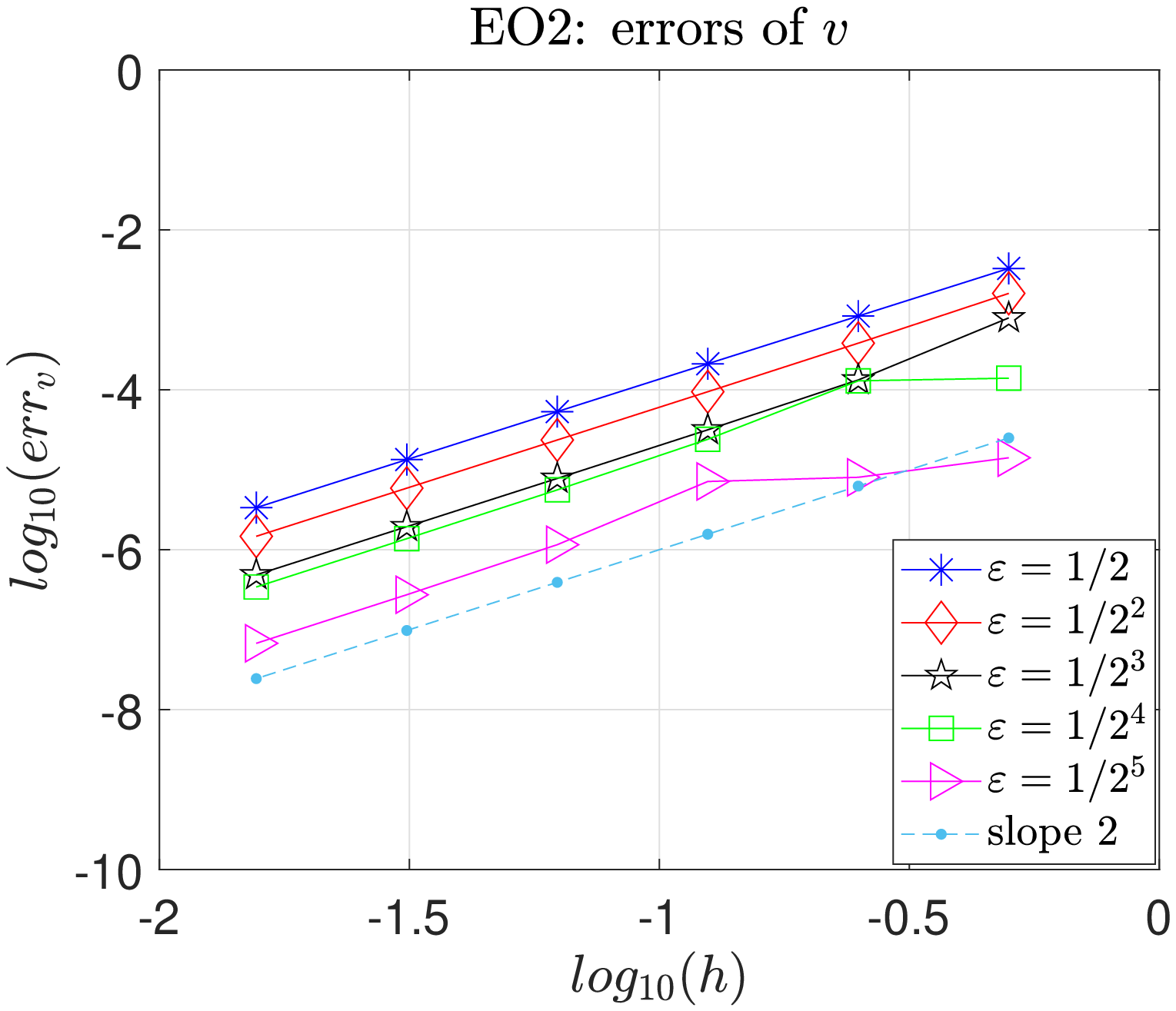,height=4.0cm,width=4.8cm}
\psfig{figure=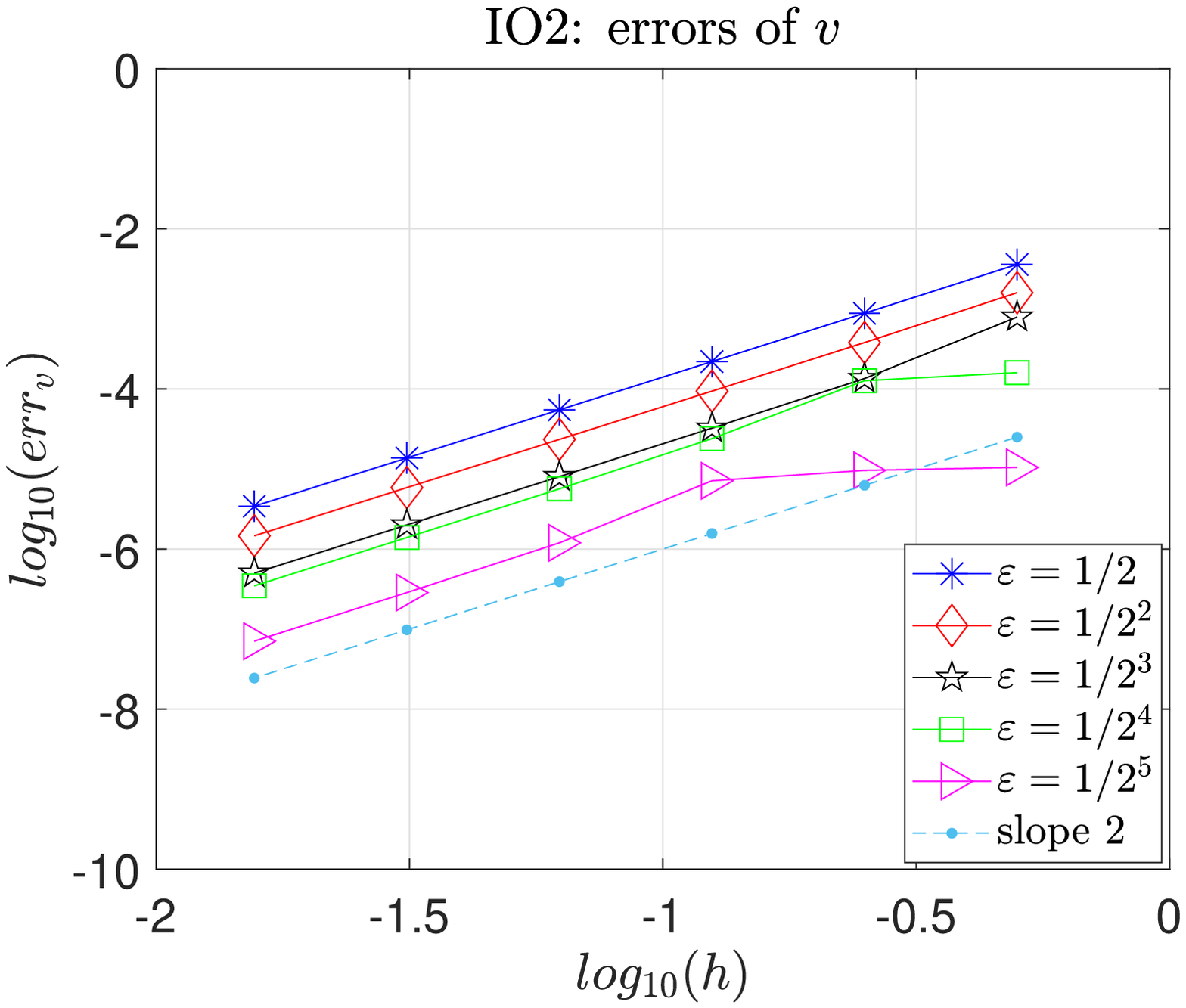,height=4.0cm,width=4.8cm}\\
\psfig{figure=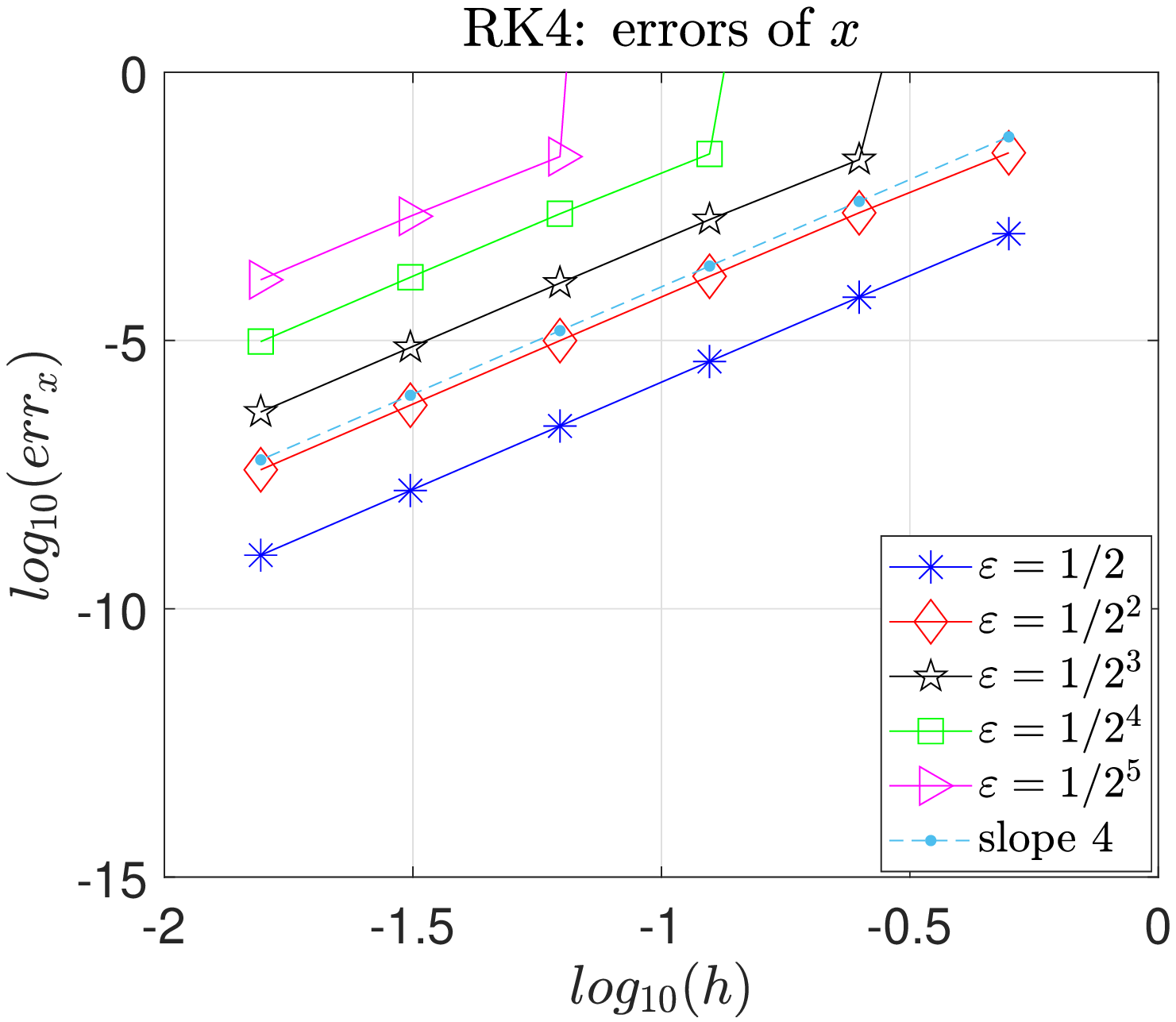,height=4.0cm,width=4.8cm}
\psfig{figure=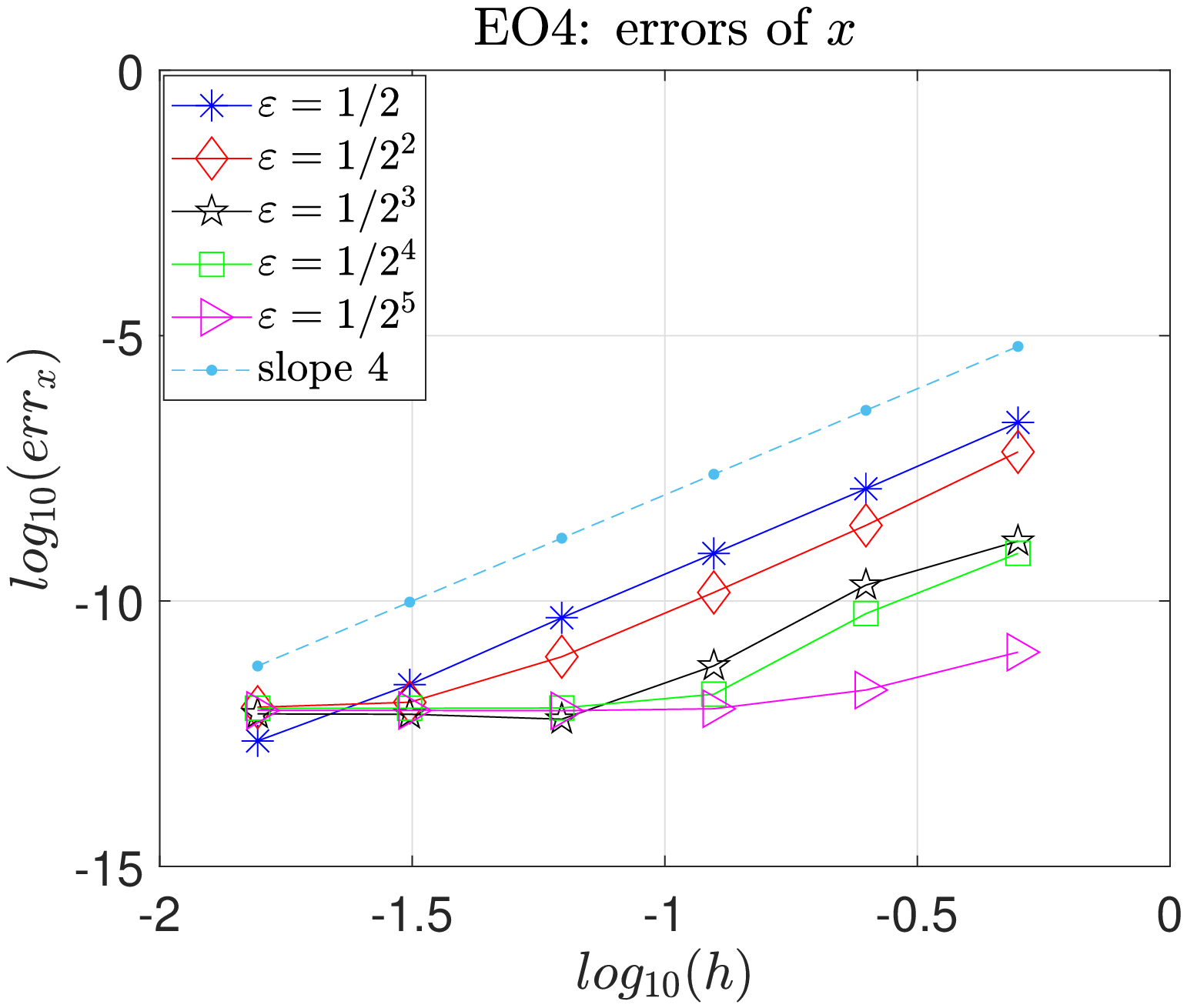,height=4.0cm,width=4.8cm}
\psfig{figure=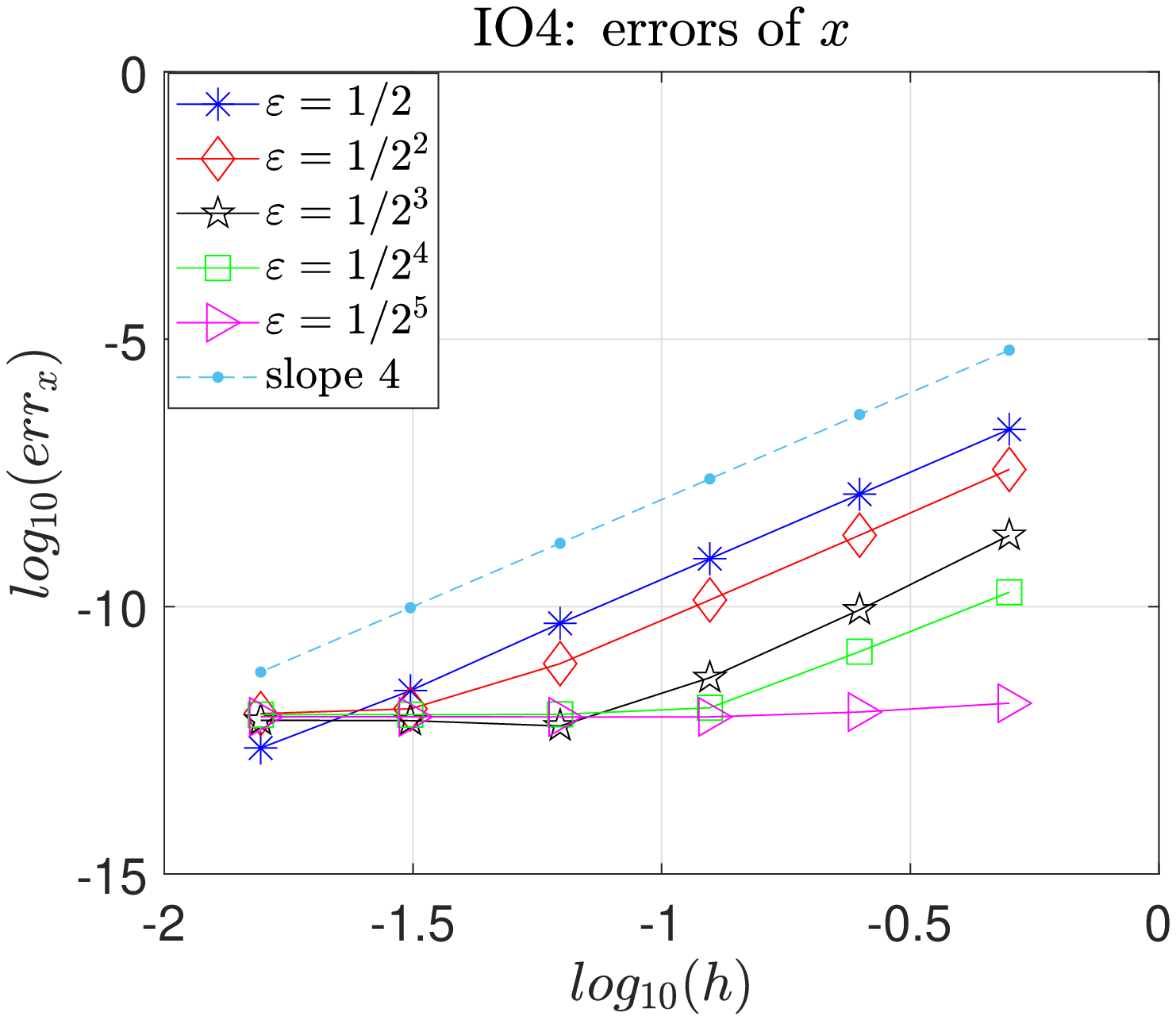,height=4.0cm,width=4.8cm}\\
\psfig{figure=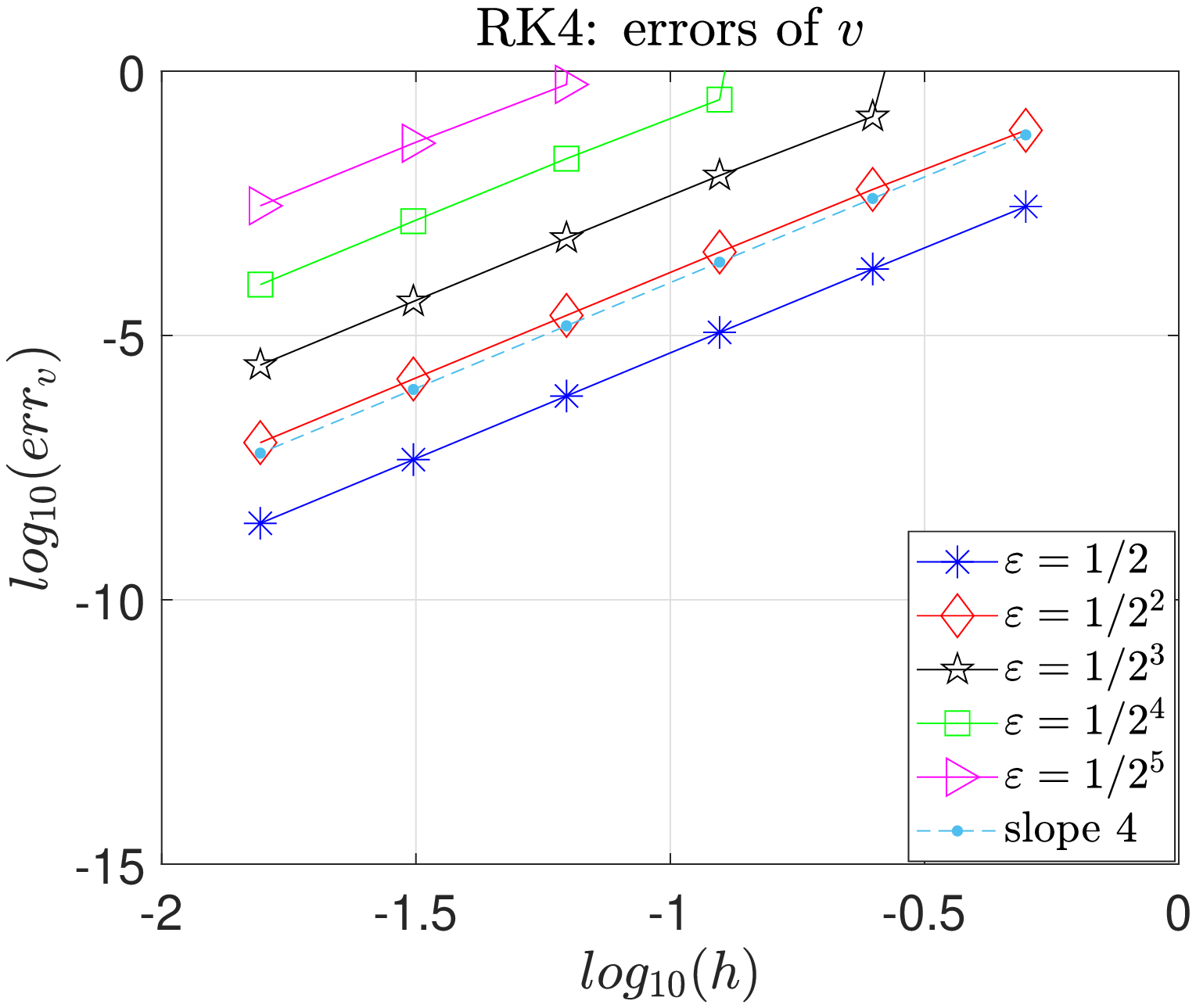,height=4.0cm,width=4.8cm}
\psfig{figure=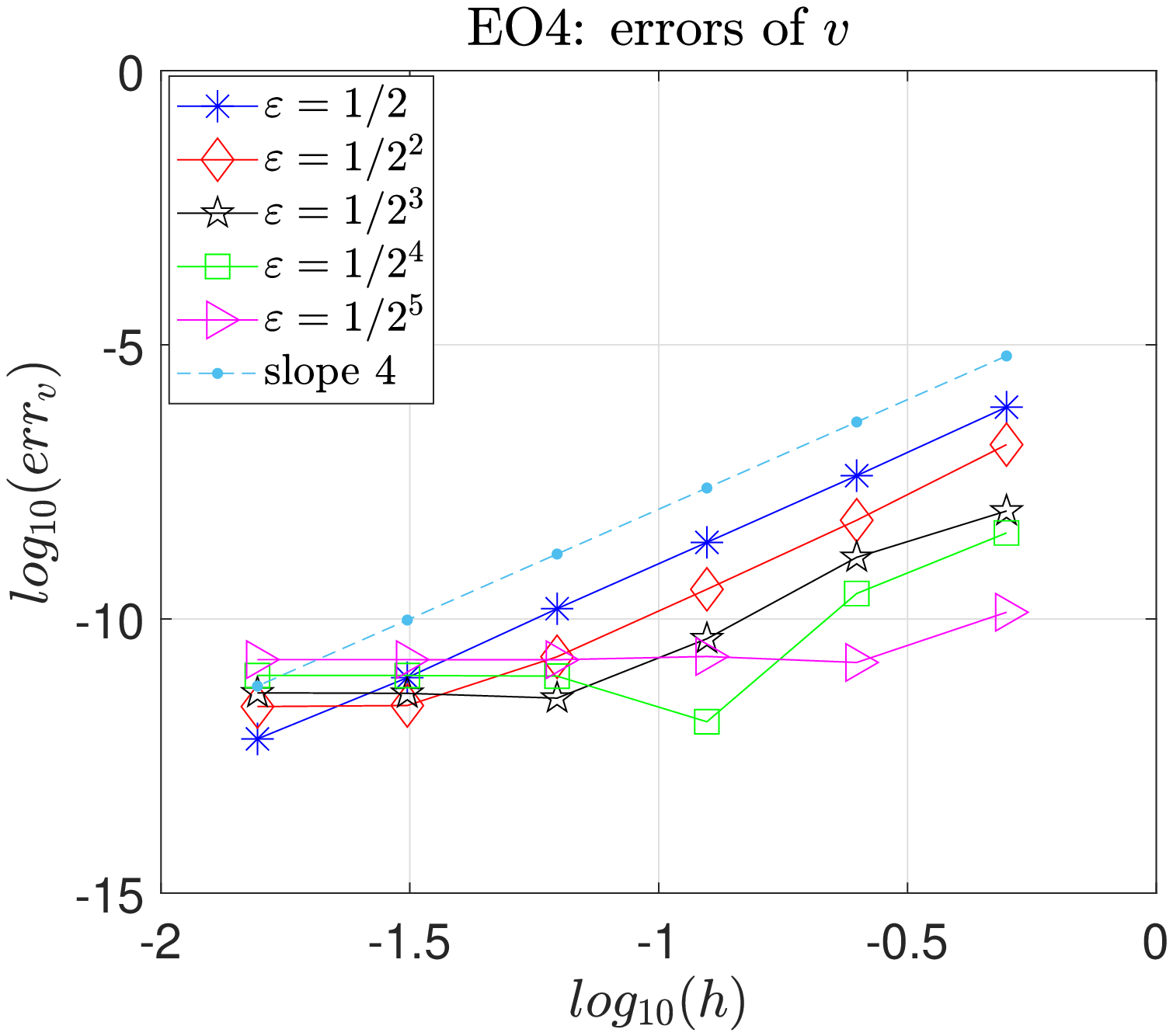,height=4.0cm,width=4.8cm}
\psfig{figure=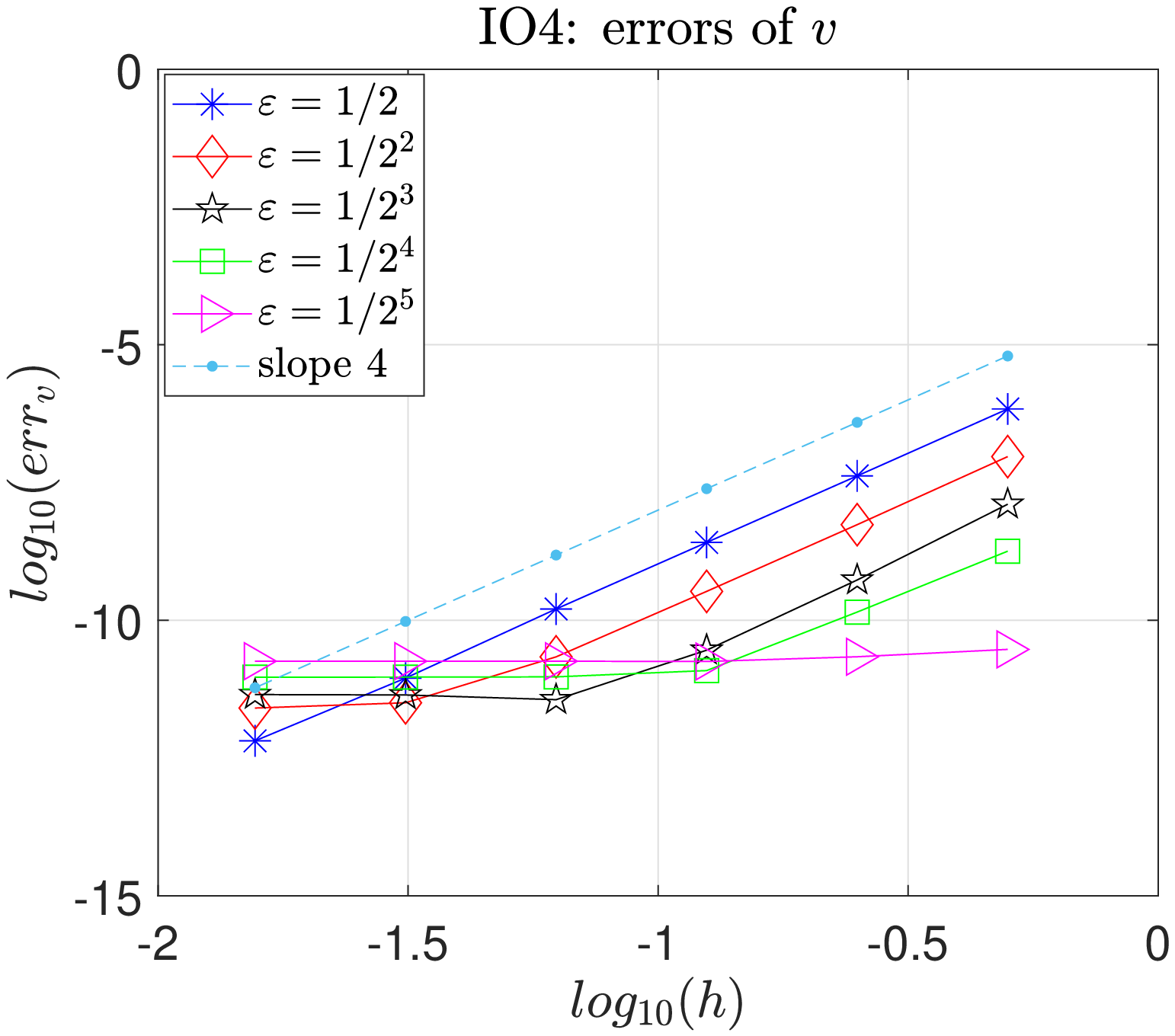,height=4.0cm,width=4.8cm}
\end{array}$$
\caption{The errors (\ref{errx})  at $t=1$ of the second-order schemes (top two rows) and fourth-order schemes (below two rows)  with $h=1/2^k$ for $k=1,2,\ldots,6$ under different $\eps$. }\label{fig21}
\end{figure}

\begin{figure}[h!]
$$\begin{array}{cc}
\psfig{figure=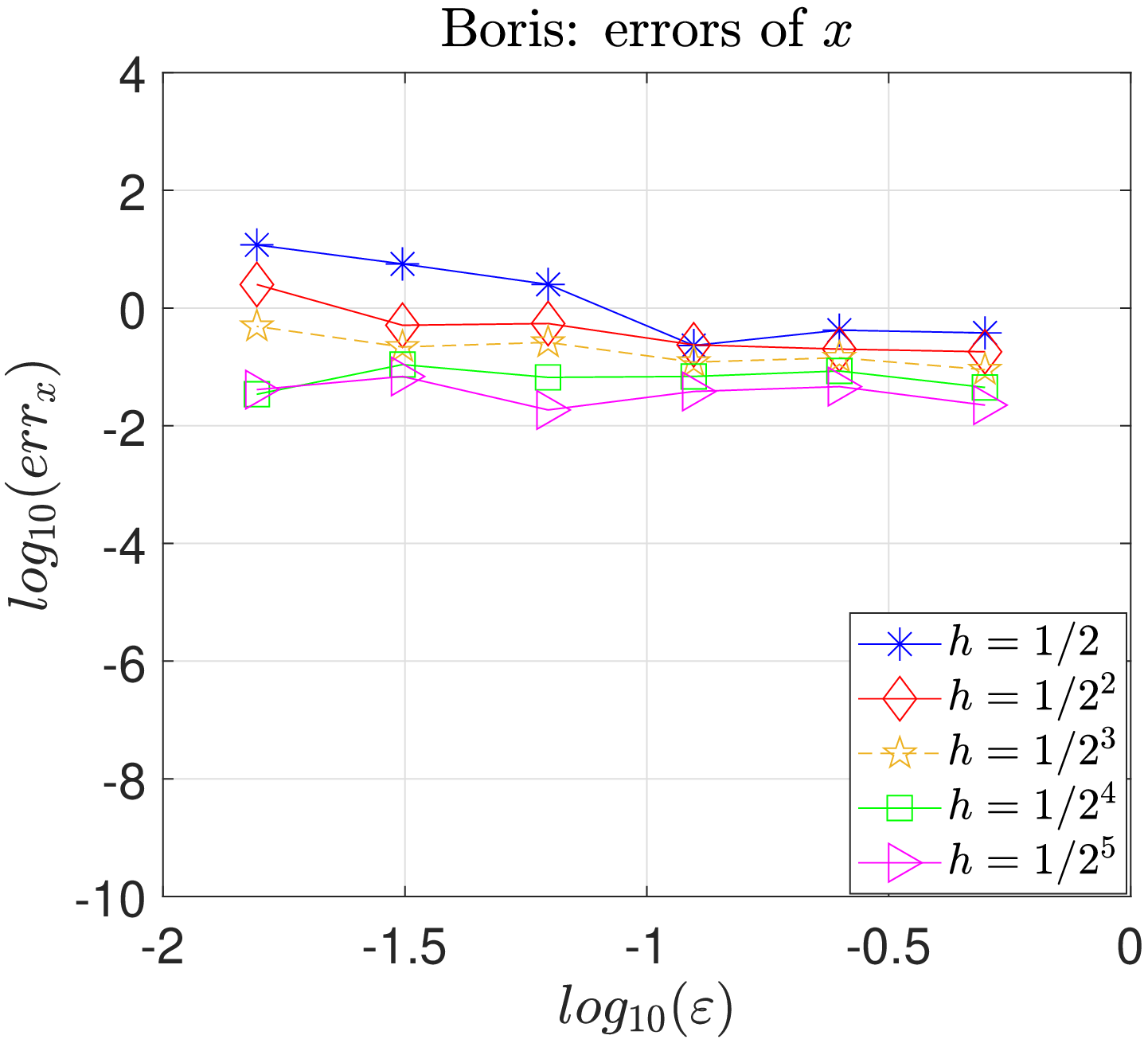,height=4.0cm,width=4.8cm}
\psfig{figure=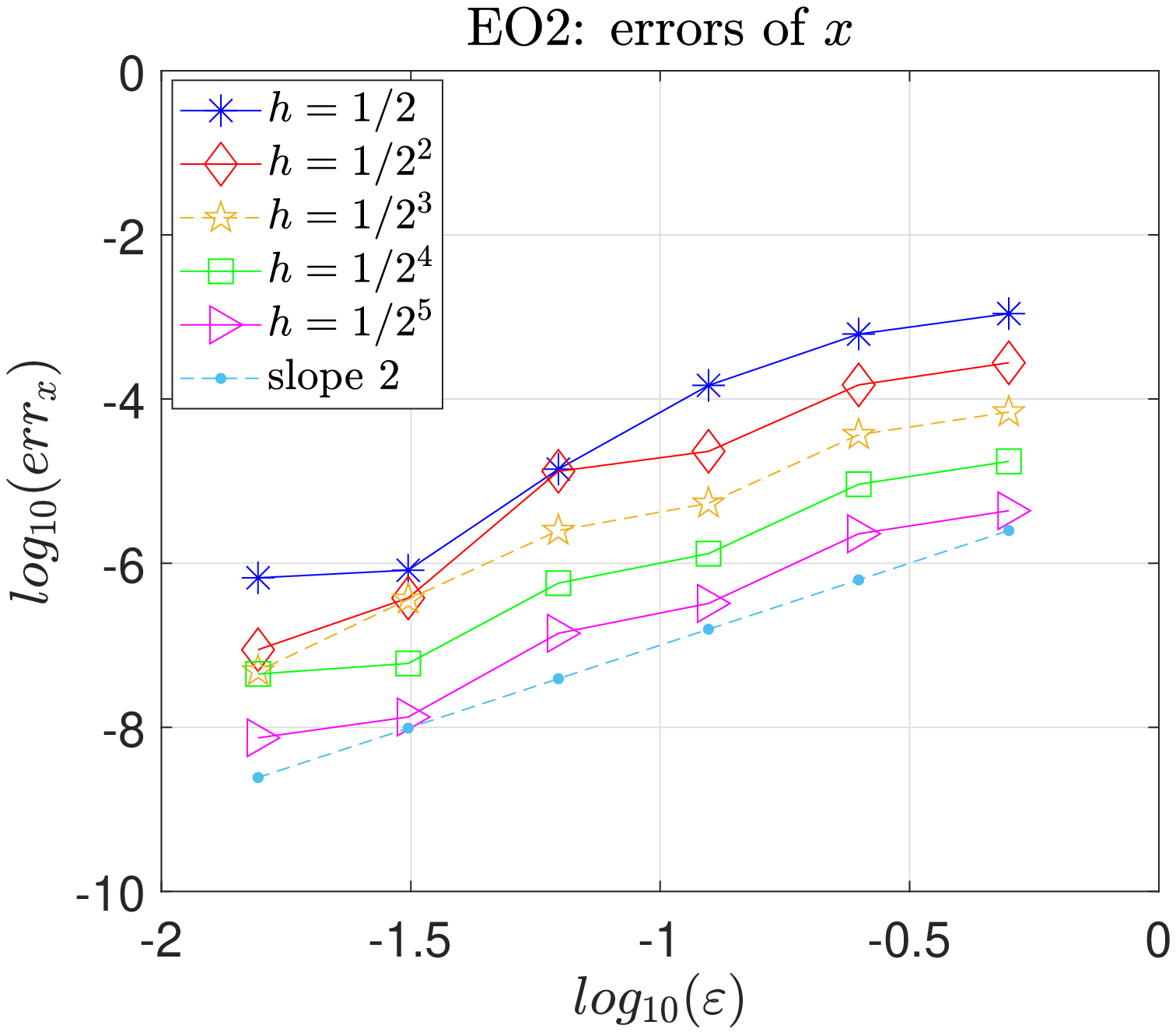,height=4.0cm,width=4.8cm}
\psfig{figure=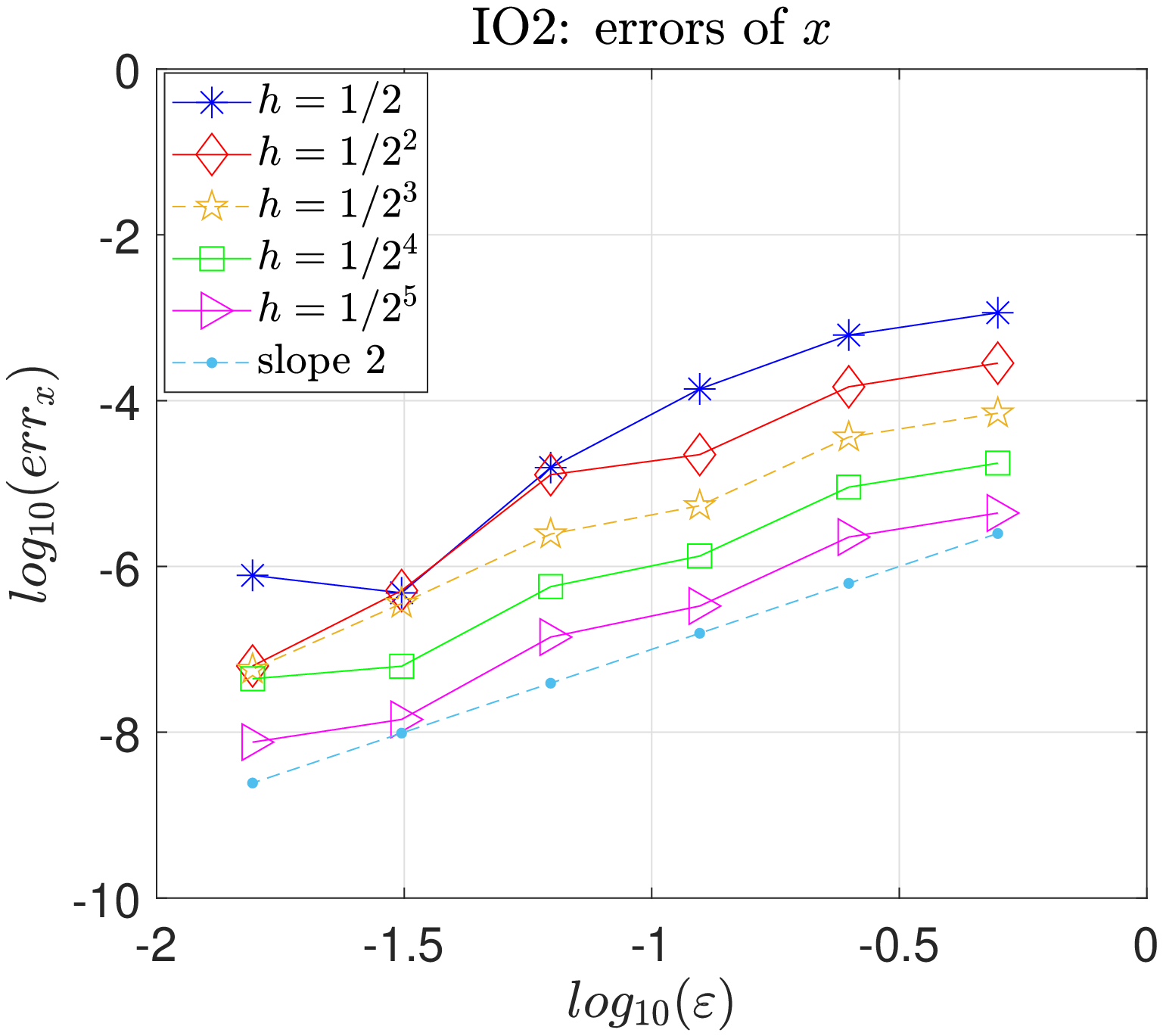,height=4.0cm,width=4.8cm}\\
\psfig{figure=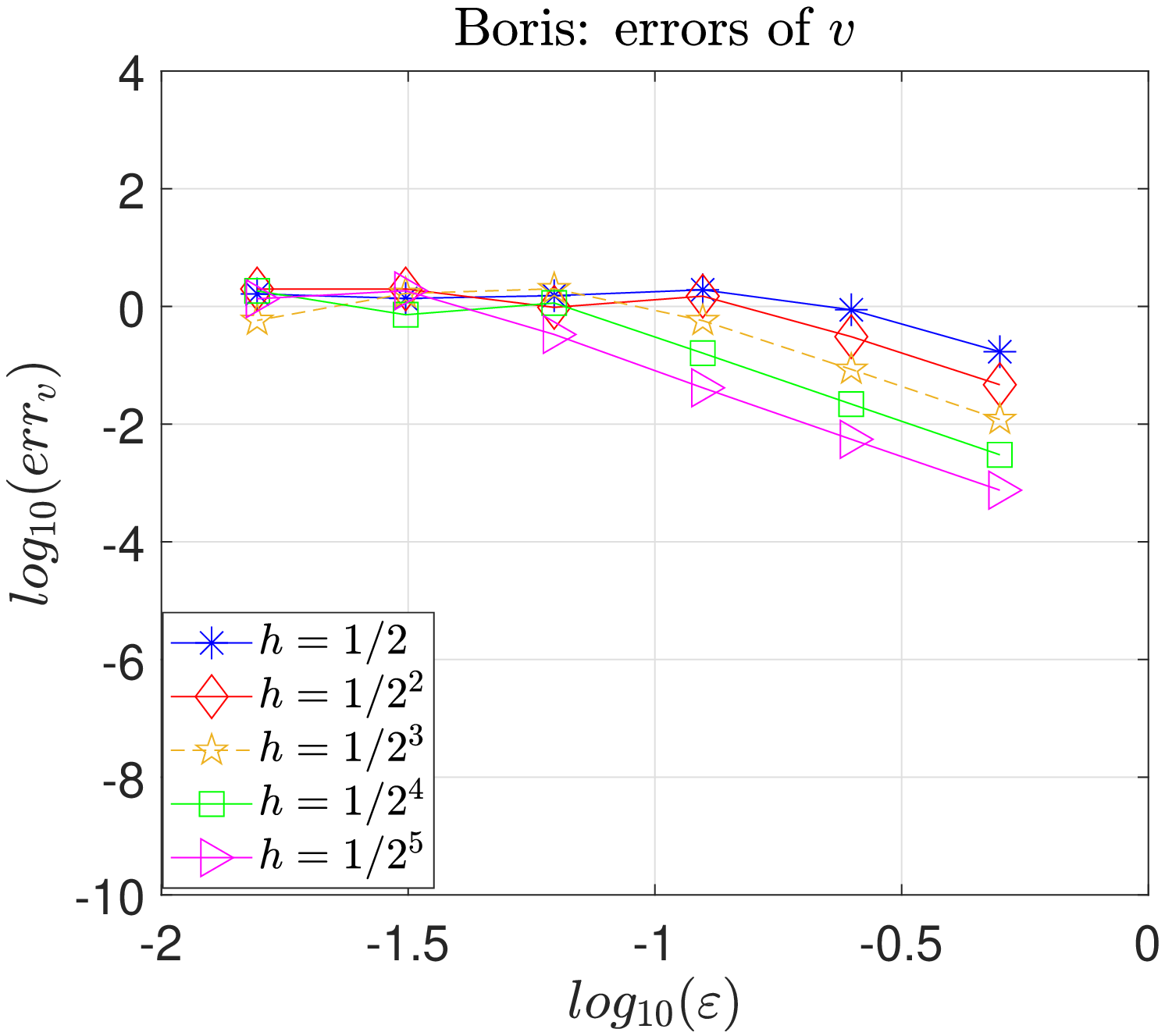,height=4.0cm,width=4.8cm}
\psfig{figure=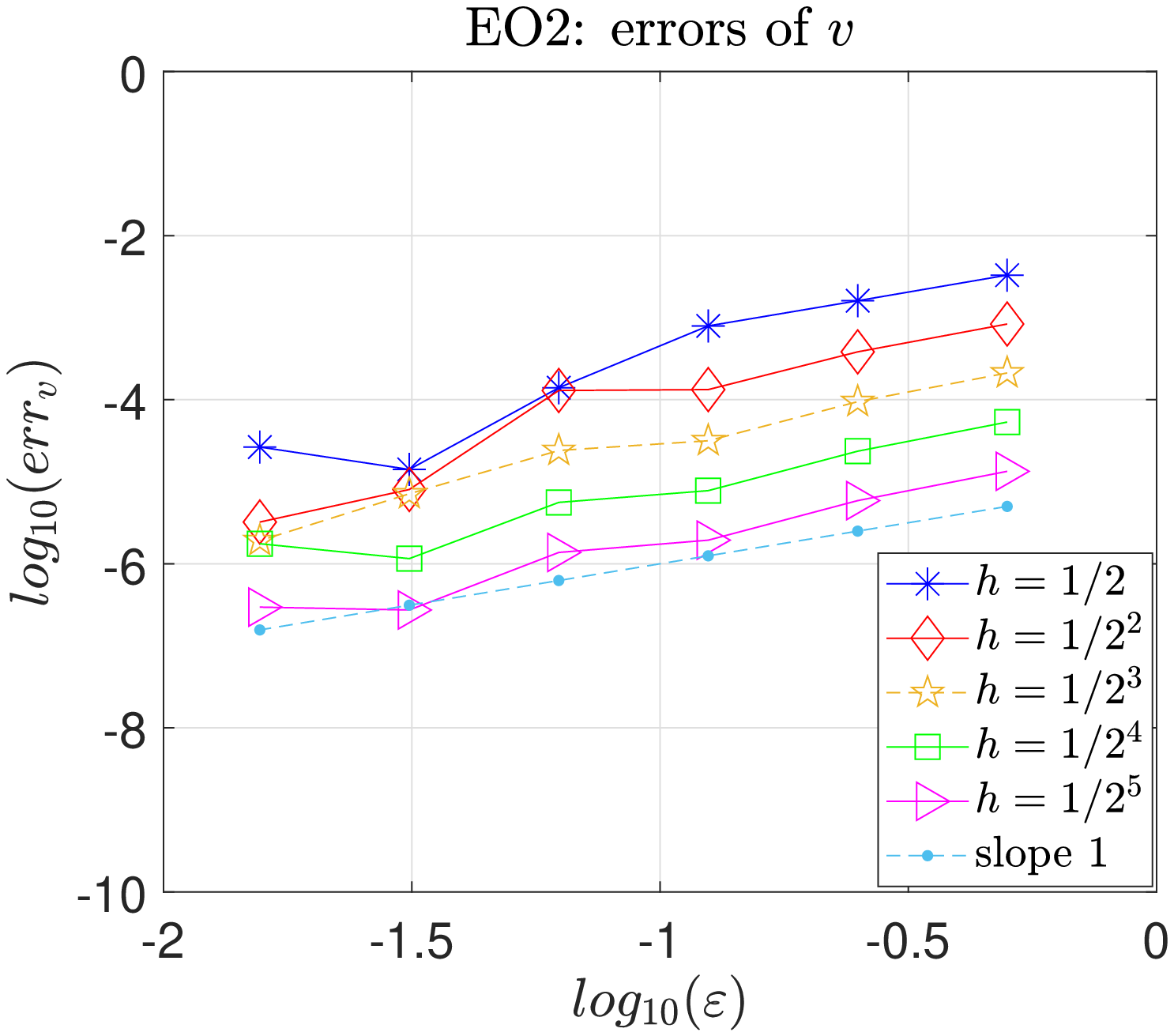,height=4.0cm,width=4.8cm}
\psfig{figure=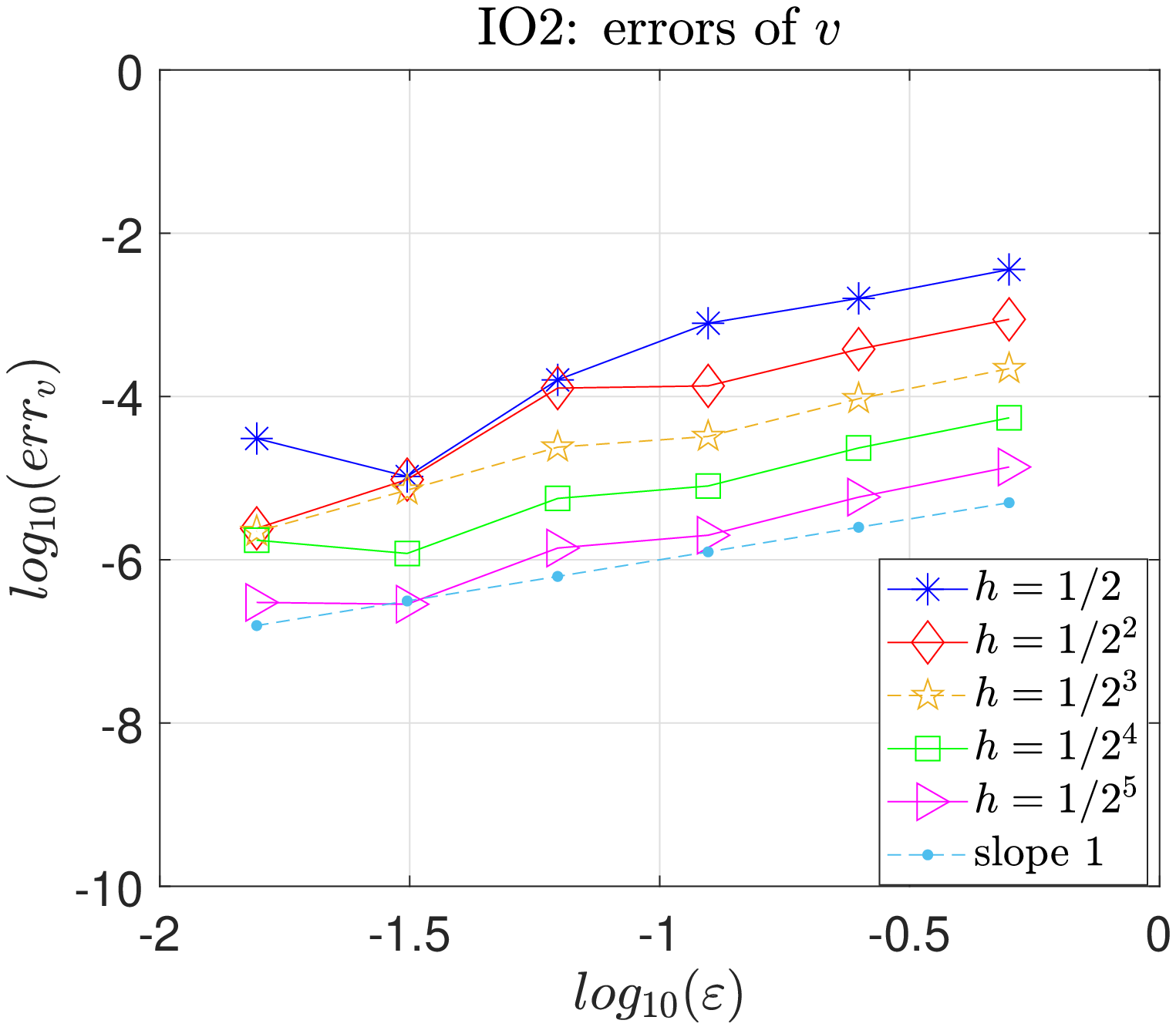,height=4.0cm,width=4.8cm}\\
\psfig{figure=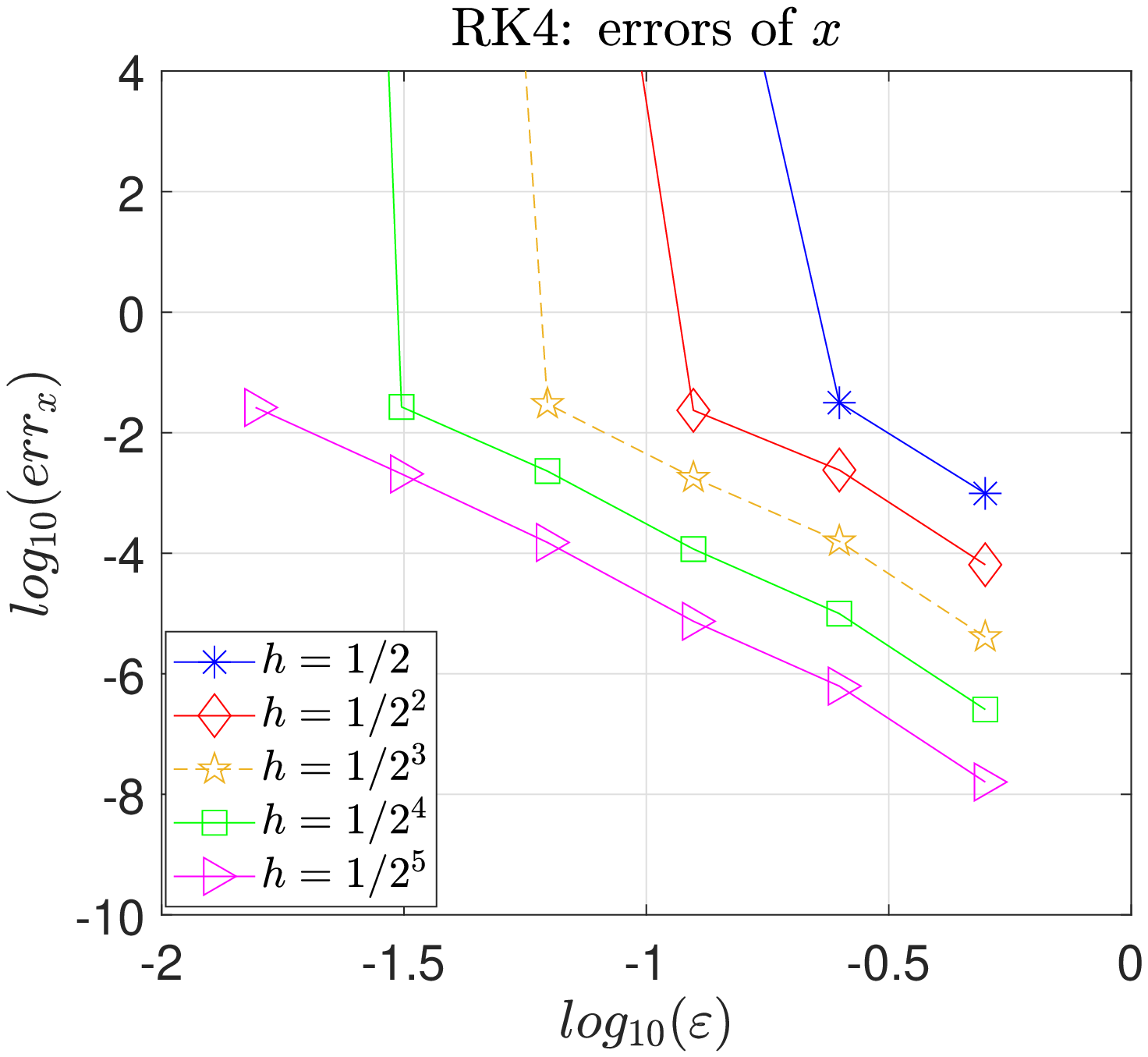,height=4.0cm,width=4.8cm}
\psfig{figure=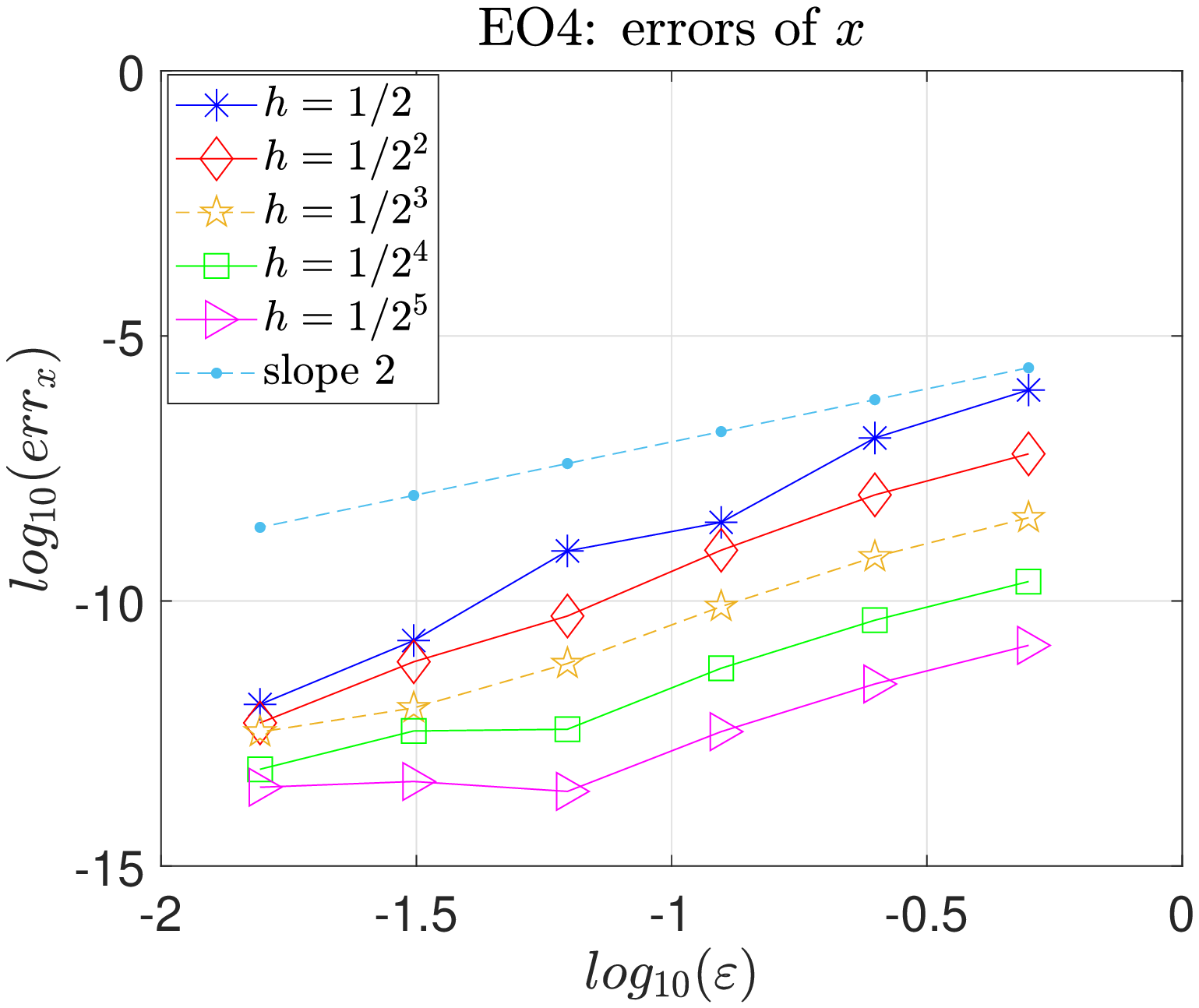,height=4.0cm,width=4.8cm}
\psfig{figure=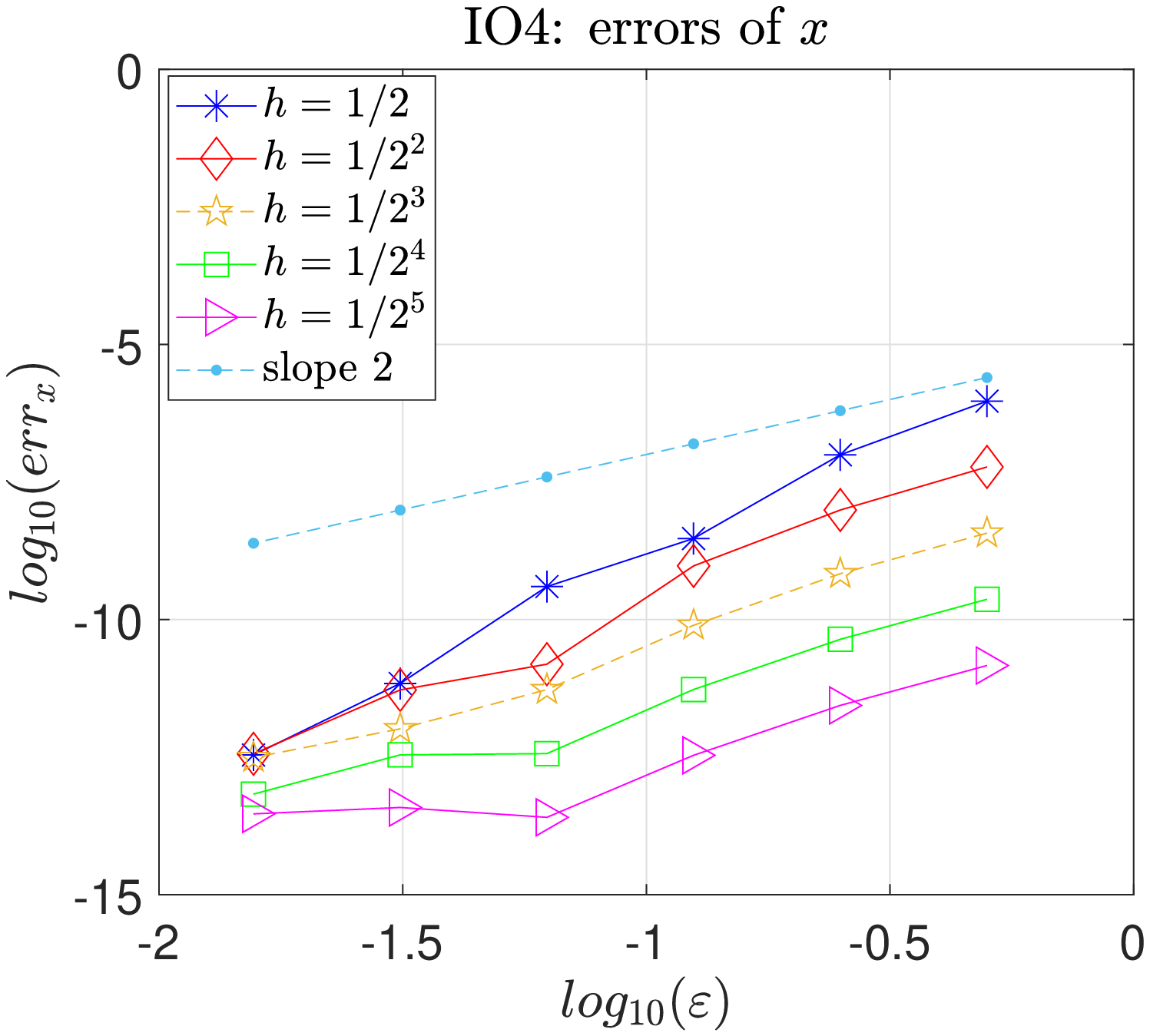,height=4.0cm,width=4.8cm}\\
\psfig{figure=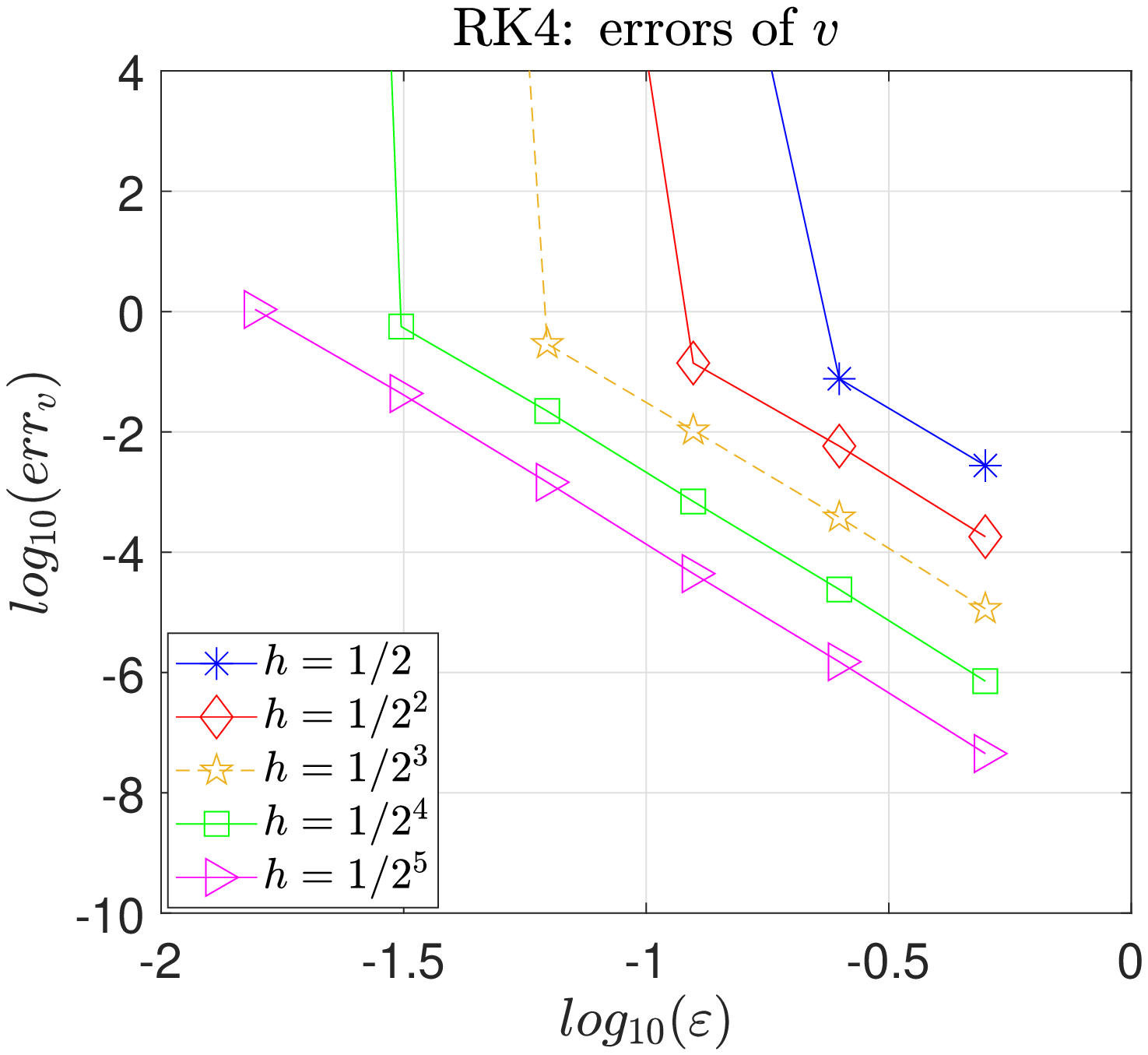,height=4.0cm,width=4.8cm}
\psfig{figure=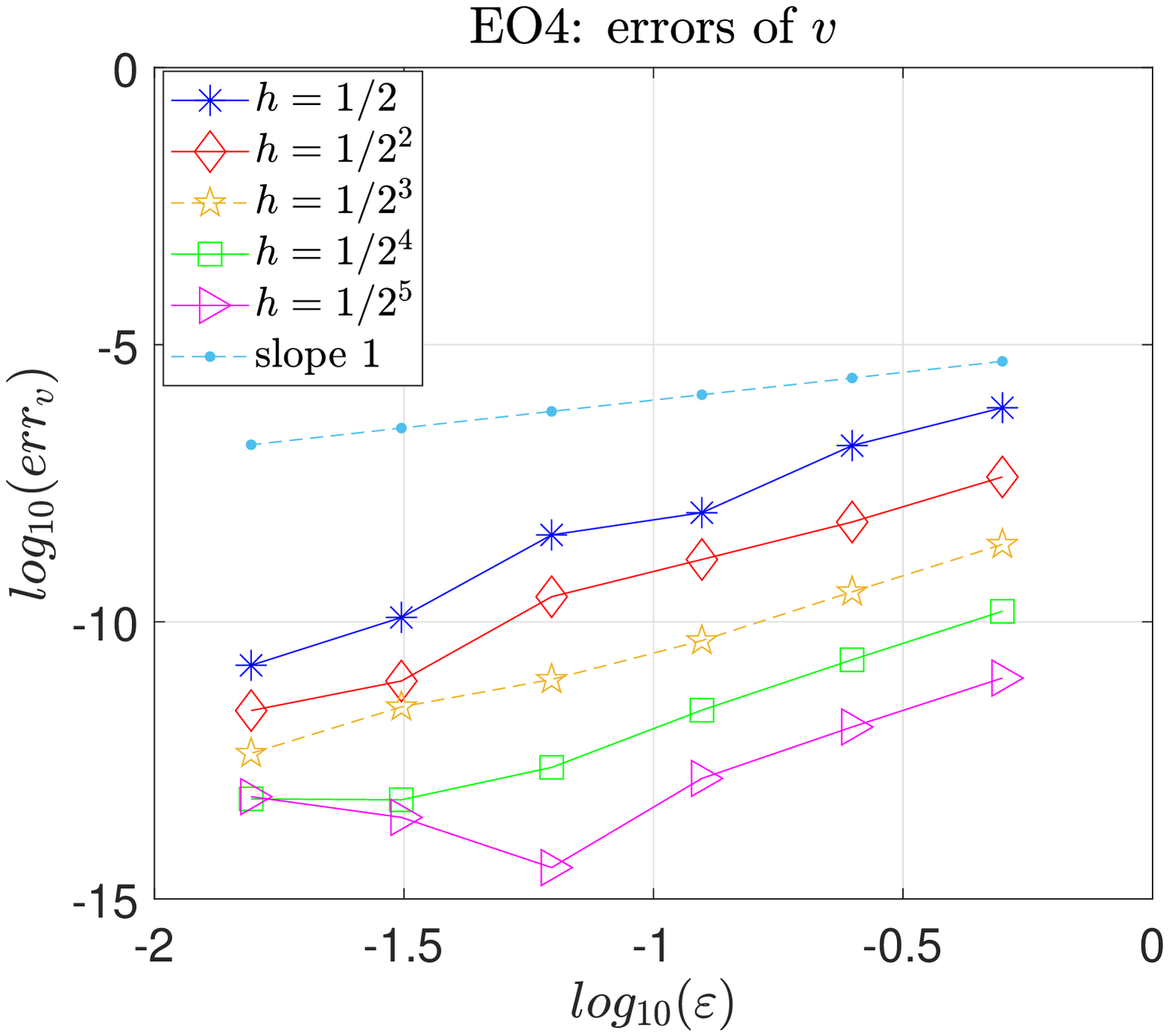,height=4.0cm,width=4.8cm}
\psfig{figure=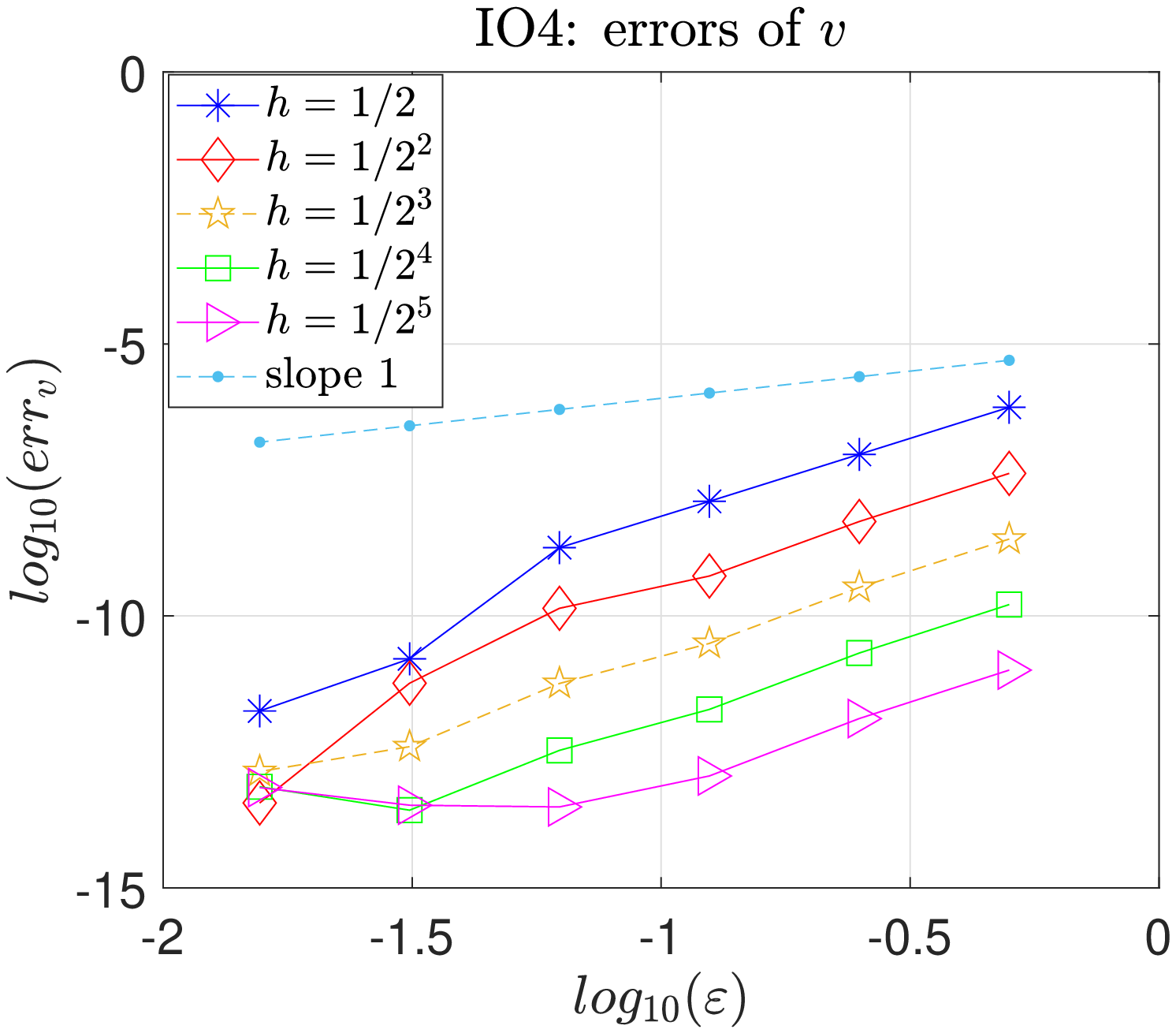,height=4.0cm,width=4.8cm}
\end{array}$$
\caption{The errors (\ref{errx})   at $t=1$ of the second-order schemes (top two rows) and fourth-order schemes (below two rows)  with $\eps=1/2^k$ for $k=1,2,\ldots,6$ under different $h$.}\label{fig22}
\end{figure}
Let us illustrate the performance of our schemes with a single particle in two space dimensions under a strong magnetic field \cite{VP1}:
$$
  \dot{x}(t)=v(t), \ \ \dot{v}(t)=\frac{b(x)}{\eps}Jv(t)+g(x(t)),  \quad t>0,
$$
where $g(x)=(\cos(q(1)/2)\sin(q(2))/2,\sin(q(1)/2)\cos(q(2)))^\intercal$ and $b(x)=1+\sin(q(1))\sin(q(2))$.
This is a reduced model from the three dimensional CPD case \ when the external magnetic field has a fixed direction and is homogenous in space. We choose the initial value $x(0)=(0.1,0.1)^\intercal,\,v(0)=(0.2,0.1)^\intercal$, and fix $N_\tau=2^6$ in the computations. For comparison, we choose   the well known Boris method (with second-order accuracy) denoted by Boris and a Runge-Kutta method (the fourth order Gauss-Legendre method) denoted by RK4.
Figure \ref{fig21} displays the numerical errors    \begin{equation}\label{errx}err_x:=\frac{\norm{x^n-x(t_n)}}{\norm{x(t_n)}},\ \ \ \ err_v:=\frac{\norm{v^n-v(t_n)}}{\norm{v(t_n)}}.\end{equation} against $h$ for different $\eps$. These results show that Boris, EO2, IO2 perform second order and  RK4, EO4, IO4 display fourth order.  In order to show the influence of $\eps$ on the accuracy, we present the errors $err_x$ and $err_v$ against $\eps$ for different $h$ in Figure \ref{fig22}. In the light of these results, we have the following observations.
  The four integrators  formulated in this paper have   improved uniformly high  accuracy  in both  position and velocity,  and when $\eps$  decreases, the accuracy is improved.  However, for the methods Boris and RK4, they do not have such optimal accuracy. The accuracy of these two methods becomes worse as $\eps$  decreases.

\section{Application to the three dimensional CPD}\label{sec:5}
 For the three dimensional CPD \eqref{charged-particle 3d}, it is noted that we cannot take the approach given for the  two dimensional case, since in that way,
$s  \varphi_1(s B(x(0) ))$ is no longer  periodic and thus two-scale exponential  integrator cannot be used. For the uniformly accurate (UA) methods for solving three dimensional CPD, some novel algorithms have been recently proposed in
\cite{Zhao}.
Based on the approach given in this paper for the two dimensional case, we can formulate a kind of UA methods with more simple scheme for the  three dimensional CPD \eqref{charged-particle 3d} in maximal ordering case. We state the application as follows.

For the three dimensional CPD \eqref{charged-particle 3d} in maximal ordering case  \cite{scaling1,lubich19,scaling2}, i.e., $B=B(\eps x):=(b_1(\eps x),b_2(\eps x),b_3(\eps x))^\intercal\in \RR^3$, we first rewrite it as
\begin{equation}\label{charged-particle2 3d}
  \dot{x}(t)=v(t), \ \ \dot{v} (t)= \frac{\widehat B_0}{\eps} v(t)+F(x(t),v(t)),\ \ 0<t\leq T, \ \
     x(0)=x_0\in\RR^3,\quad v(0)=v_0\in\RR^3,
\end{equation}
where $\widehat B_0=\widehat B(\eps x(0))$ with $\widehat B (\eps x)= \begin{pmatrix}
                     0 & b_3(\eps x) & -b_2(\eps x) \\
                     -b_3(\eps x) & 0 & b_1(\eps x) \\
                     b_2(\eps x) & -b_1(\eps x) & 0 \\
                  \end{pmatrix}
$ and $F(x(t),v(t))=\frac{\widehat B(\eps x(t))-\widehat B_0}{\eps} v(t) +E(x(t)).$
With the maximal ordering property, it is worth noticing that $F(x(t),v(t))$ is uniformly bounded w.r.t. $\eps$.
We introduce the filtered variable $w(t)=\varphi_0(-t \widehat B_0/\eps)v(t)$, then (\ref{charged-particle2 3d}) reads:
\begin{equation}\label{H2 model problem}  \dot{x}(t)=\varphi_0(t \widehat B_0/\eps)w(t), \quad
\dot{w}(t)= \varphi_0(-t \widehat B_0/\eps) F\big(x(t),\varphi_0(t \widehat B_0/\eps)w(t)\big),\quad \ x(0)=x_0,\quad
   w(0)=   v_0.
\end{equation}
With the help of $\widehat B_0$,  $\fe^{\pm t \widehat B_0/\eps}$ is periodic in $t/\eps$ on  $[0,2\pi]$.
By isolating the fast time variable $t/\epsilon $ as another variable $\tau$ and denoting
 $X(t,\tau)=x(t),\ W(t,\tau)=w(t),$
the two-scale system of \eqref{H2 model problem} takes the form
\begin{equation}\label{2scalenew}\left\{\begin{split}
  &\partial_tX(t,\tau)+\frac{1}{\epsilon }\partial_\tau X(t,\tau)=\varphi_0( \tau \widehat B_0 )W(t,\tau),\\
    &\partial_tW(t,\tau)+\frac{1}{\epsilon }\partial_\tau W(t,\tau)=\varphi_0( -\tau \widehat B_0 )F\big( X(t,\tau), \varphi_0( \tau \widehat B_0 )W(t,\tau)\big).
    \end{split}\right.
\end{equation}
 The initial data $[X^{0};W^{0}]:=[X(0,\tau);W(0,\tau)]$ for
(\ref{2scalenew}) is obtained by \eqref{inv} with the replacement $f_\tau$ \eqref{ftau} of
$$ f_\tau([X;W])=\left(
                      \begin{array}{c}
                      \varphi_0( \tau \widehat B_0 )W\\
                      \varphi_0( -\tau \widehat B_0 )F( X, \varphi_0( \tau \widehat B_0)W)\\
                      \end{array}
                    \right).$$

We now obtain the semi-discretization of the three dimensional CPD \eqref{charged-particle 3d} in maximal ordering case.

\begin{defi}\label{3Dmethod}
For the  three dimensional CPD  \eqref{charged-particle 3d} in maximal ordering case,   choose  a  time step $h$. Then for solving the  equation  \eqref{2scalenew} with the initial value $[X^{0};W^{0}]$,
consider an $s$-stage two-scale exponential integrator   \begin{equation*}
\begin{array}[c]{ll}%
X^{ni}&=\varphi_0(c_{i}h/\epsilon \partial_\tau)X^{n}+h\textstyle\sum\limits_{j=1}^{s}\bar{a}_{ij}(h/\epsilon \partial_\tau)  \varphi_0(\tau \widehat B_0 )W^{nj} ,\qquad\qquad\qquad\qquad\qquad
i=1,2,\ldots,s,\\
W^{ni}&=\varphi_0(c_{i}h/\epsilon \partial_\tau)W^{n}+h\textstyle\sum\limits_{j=1}^{s}\bar{a}_{ij}(h/\epsilon \partial_\tau)\varphi_0(- \tau \widehat B_0 )F\big( X^{nj},\varphi_0( \tau\widehat B_0 )W^{nj}\big),\quad\ \
i=1,2,\ldots,s,\\
X^{n+1}&=\varphi_0(h/\epsilon \partial_\tau)X^{n}+ h\textstyle\sum\limits_{j=1}^{s}\bar{b}_{j}(h/\epsilon \partial_\tau) \varphi_0(\tau \widehat B_0 )W^{nj},\\
W^{n+1}&=\varphi_0(h/\epsilon \partial_\tau)W^{n}+  h\textstyle\sum\limits_{j=1}^{s}\bar{b}_{j}(h/\epsilon \partial_\tau)\varphi_0(- \tau \widehat B_0 )F\big( X^{nj},\varphi_0( \tau \widehat B_0 )W^{nj}\big),
\end{array}\end{equation*}
with the coefficients $c_i\in[0,1]$, $\bar{a}_{ij}(h/\epsilon \partial_\tau)$ and
$\bar{b}_{j}(h/\epsilon \partial_\tau)$.
 The numerical solution $x^{n+1}\approx x(t_{n+1})$ and $v^{n+1}\approx v(t_{n+1})$ of \eqref{charged-particle 3d} is given by  \begin{equation*} \begin{aligned}
&x^{n+1}=  X^{n+1},\qquad \
 v^{n+1}=  \varphi_0(  t_{n+1} \widehat B_0/\epsilon )  W^{n+1}.
 \end{aligned}
\end{equation*}

\end{defi}
Based on the Fourier pseudospectral method in $\tau$,  the full-discretization  can be formulated by using the same way as that of two dimensional CPD. For simplicity, we do not go further on this point here.
For the semi-discretization, it has a uniform accuracy and we state it as follows.
\begin{theo} Under the conditions of Theorem \ref{UA thm}, for the final numerical solution   $x^n,  v^n$  produced
by the method  given in Definition \ref{3Dmethod}, the global error is \begin{equation*}
\begin{aligned}
\norm{x^n-x(t_n)}+\norm{  v^n-  v(t_n)} \leq C  h^r,\qquad 0\leq n\leq T/h,\\
\end{aligned}
\end{equation*}
where $C$ is independent of $n, h, \epsilon$.
\end{theo}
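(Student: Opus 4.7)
The plan is to mirror the structure of the proof of Theorem~\ref{UA thm}, adapting each step to the three-dimensional maximal ordering setting. The key structural observations are that, under the maximal ordering scaling $B=B(\eps x)$, the nonlinearity $F(x,v)=\frac{\widehat B(\eps x)-\widehat B_0}{\eps}v+E(x)$ is globally Lipschitz and uniformly bounded with respect to $\eps$ (by a Taylor-type expansion of $\widehat B$ that cancels the $1/\eps$), and that $\varphi_0(\pm\tau \widehat B_0)$ is orthogonal and provides the periodic structure needed for the two-scale framework. These two facts replace the roles played in the two-dimensional case by the smallness $F=O(\eps)$ and by the explicit periodicity of $\tau\varphi_1(\tau J)$. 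Consequently I expect to lose the extra $\eps^2$-factor present in Theorem~\ref{UA thm} but to retain $h^r$-accuracy uniformly in $\eps$.

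First, following the strategy of Lemmas~\ref{lem1}--\ref{LBS}, I would establish that the solution $(X(t,\tau),W(t,\tau))$ of \eqref{2scalenew} and its $t$-derivatives up to order $r$ are bounded in $L_t^\infty(W_\tau^{1,\infty})$ uniformly in $\eps$. The argument proceeds by a direct Gronwall estimate on \eqref{2scalenew}, using that the rotations $\varphi_0(\pm\tau \widehat B_0)$ are isometries and that $F$ is Lipschitz and bounded; differentiating in $t$ then propagates the bound to higher derivatives. Unlike the two-dimensional case, no $\eps$-factor is gained here because $F$ is only $O(1)$, not $O(\eps)$. The initial data produced by \eqref{inv} with $j=r$ is used precisely to guarantee that the leading singular contribution $\partial_\tau[X^0;W^0]/\eps$ in $[\partial_tX;\partial_tW]|_{t=0}$ cancels, which is what keeps all $t$-derivatives $O(1)$ rather than $O(1/\eps)$.

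Next, the analog of Lemma~\ref{stiff thm} on the boundedness of the numerical iterates $X^{ni},W^{ni},X^{n+1},W^{n+1}$ goes through almost verbatim, since the coefficients $\bar a_{ij}(h/\eps\partial_\tau)$, $\bar b_j(h/\eps\partial_\tau)$ are uniformly bounded $\varphi$-functions on $L^\infty_\tau$, and the explicit/implicit split can be handled exactly as in Section~\ref{sec:2} via a contractive fixed-point iteration. For the remainder estimates, substituting the exact solution of \eqref{2scalenew} into the scheme and applying the Duhamel formula together with a Taylor expansion in $h$ yields, for the full step,
\begin{equation*}
[\delta_X^{n+1};\delta_W^{n+1}] = h \sum_{j\ge 0} h^{j}\bigl(\psi_j(h/\eps\partial_\tau)\otimes\mathrm{diag}(1,1,1)\bigr)\frac{\mathrm d^j}{\mathrm d t^j}\mathcal G(t_n),
\end{equation*}
where $\mathcal G$ collects the $\varphi_0(\pm\tau \widehat B_0)$-weighted right-hand sides. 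The stiff order conditions of Table~\ref{tab1} together with the derivative bounds from the previous step give $\normm{\delta_X^{n+1}}+\normm{\delta_W^{n+1}} \le C h^{r+1}$ and $\normm{\Delta_X^{ni}}+\normm{\Delta_W^{ni}} \le C h^{r}$, without the $\eps^2$ prefactor of the corresponding bound in Section~\ref{sec:2}.

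Finally I would set up the error recursion on $e_X^n, e_W^n, E_X^{ni}, E_W^{ni}$ obtained by subtracting the scheme from the perturbed identities, use the Lipschitz bound on $F$ and the orthogonality of $\varphi_0(\pm\tau \widehat B_0)$ to close the stage errors in terms of the step errors under a mild restriction $h\le h_0$ with $h_0$ independent of $\eps$, and conclude via discrete Gronwall that $\normo{e_X^n}+\normo{e_W^n}\le Ch^r$. The reconstructions $x^n=X^n$ and $v^n=\varphi_0(t_n\widehat B_0/\eps)W^n$, combined with the identities $x(t_n)=X(t_n,t_n/\eps)$ and $v(t_n)=\varphi_0(t_n\widehat B_0/\eps)W(t_n,t_n/\eps)$ and the isometry of $\varphi_0$, then yield the claimed bound. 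The main obstacle is the uniform-in-$\eps$ control of the higher $t$-derivatives of $(X,W)$ in step one; this is where the well-preparation of the initial data via the Chapman--Enskog iterate \eqref{inv} is indispensable, since the naive choice $[x_0;v_0]$ would introduce $O(1/\eps)$ derivatives and destroy the uniform accuracy.
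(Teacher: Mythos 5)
Your proposal is correct and follows exactly the route the paper intends: the paper itself omits the proof, stating only that the result ``can be proved in a similar way as stated in Section \ref{sec:2}'', and your adaptation supplies precisely that argument, correctly identifying that the $\eps^2$ gain of Theorem \ref{UA thm} is lost because under maximal ordering $F$ is only $O(1)$ rather than $O(\eps)$, while the well-prepared initial data \eqref{inv} still removes the $O(1/\eps)$ singularity in the $t$-derivatives and the orthogonality of $\varphi_0(\pm\tau\widehat B_0)$ transfers the two-scale error bound to $(x^n,v^n)$.
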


This result can be proved in a similar way as stated in Section  \ref{sec:2} and we skip it for brevity.

\vskip2mm\noindent \textbf{Numerical test.}
As an illustrative
numerical experiment, we consider  the  three dimensional CPD \eqref{charged-particle 3d}
with a strong magnetic field \cite{lubich19}
$$B(x,t) =\nabla \times \frac 1 \eps \,  \left(
                              \begin{array}{c}
                             0 \\
                             x_1\\
                              0 \\
                              \end{array}
                            \right)+\nabla \times  \,  \left(
                              \begin{array}{c}
                            0  \\
                            x_1 x_3 \\
                              0  \\
                              \end{array}
                            \right)
                            =\frac 1 \eps \,  \left(
                              \begin{array}{c}
                              0\\
                              0 \\
                              1 \\
                              \end{array}
                            \right)
 +  \left(
     \begin{array}{c}
      -  x_1 \\
       0  \\
      x_3 \\
     \end{array}
   \right),
 $$
 and $E(x,t)=-\nabla_x U(x)$ with the potential
$U(x)=\frac{1}{\sqrt{x_1^2+x_2^2}}.$
 The initial values are chosen as $x(0)=(\frac 1 3,\frac 1 4,\frac 1
2)^{\intercal}$ and $v(0)=(\frac 2 5,\frac 2 3,1)^{\intercal}$. We
solve this problem on  $[0,1]$  by the same methods of Section \ref{subNT} combined with the Fourier pseudospectral method ($N_\tau=2^6$).
The   errors of all the methods \begin{equation}\label{err}err:=\frac{\norm{x^n-x(t_n)}}{\norm{x(t_n)}}+\frac{\norm{v^n-v(t_n)}}{\norm{v(t_n)}}
\end{equation}are displayed in Figures \ref{fign}-\ref{fign2}. From these results, it follows that
   EO2 and IO2 show    uniform second order accuracy,    EO4 and IO4 have   uniform fourth order accuracy, but Boris and RK4  do not have such uniform accuracy.

\begin{figure}[t!]
$$\begin{array}{cc}
\psfig{figure=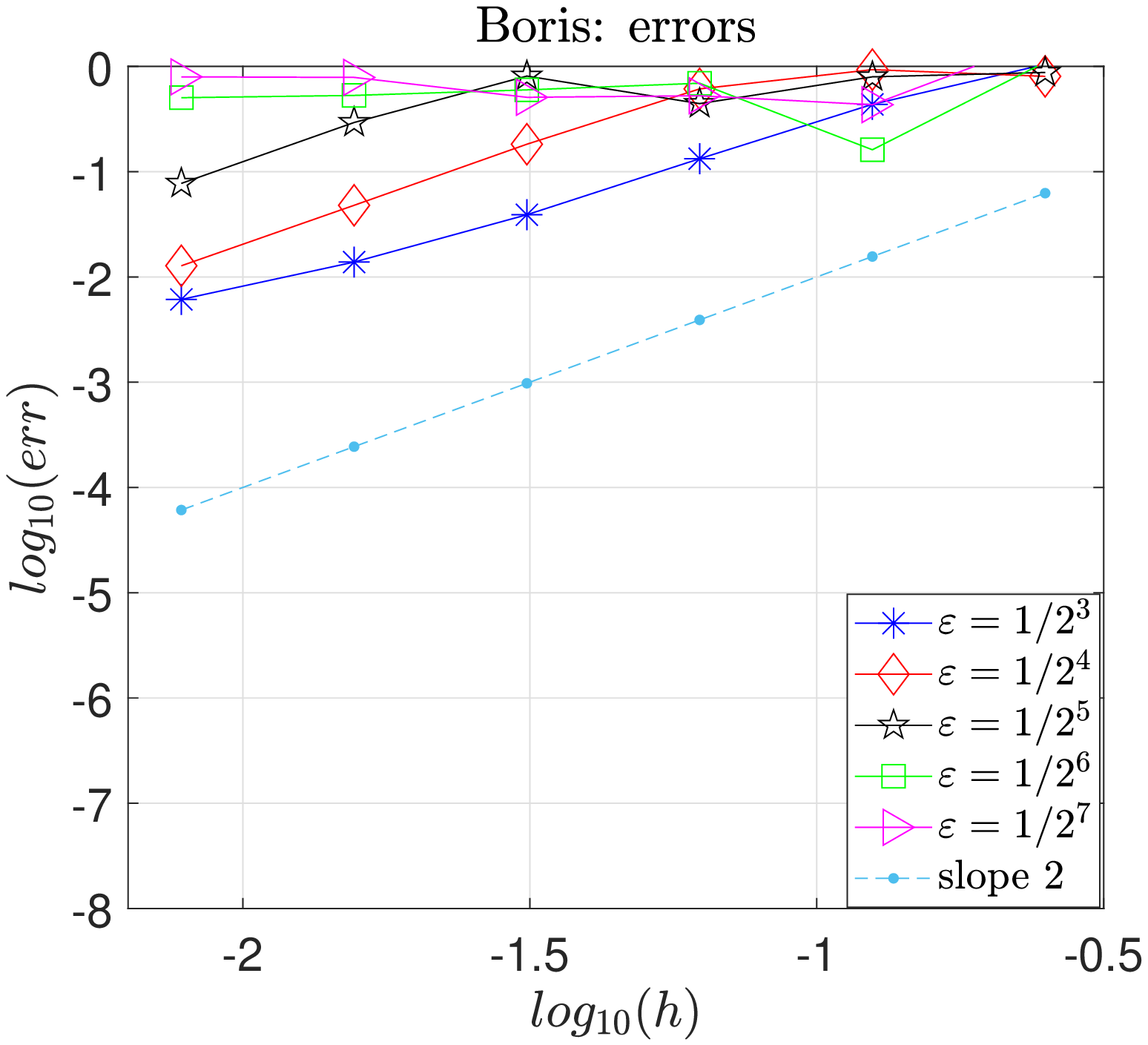,height=4.0cm,width=4.8cm}
\psfig{figure=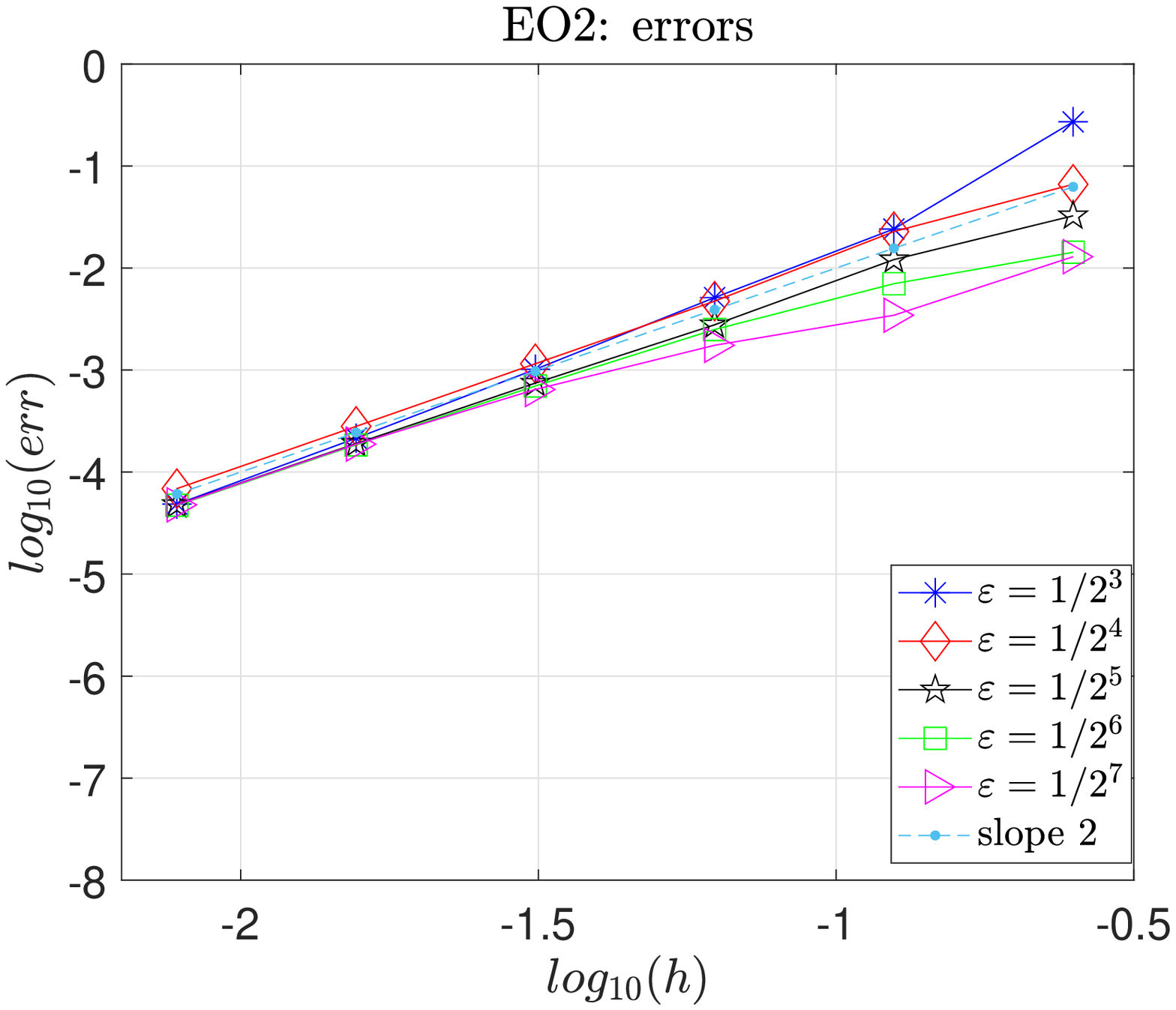,height=4.0cm,width=4.8cm}
\psfig{figure=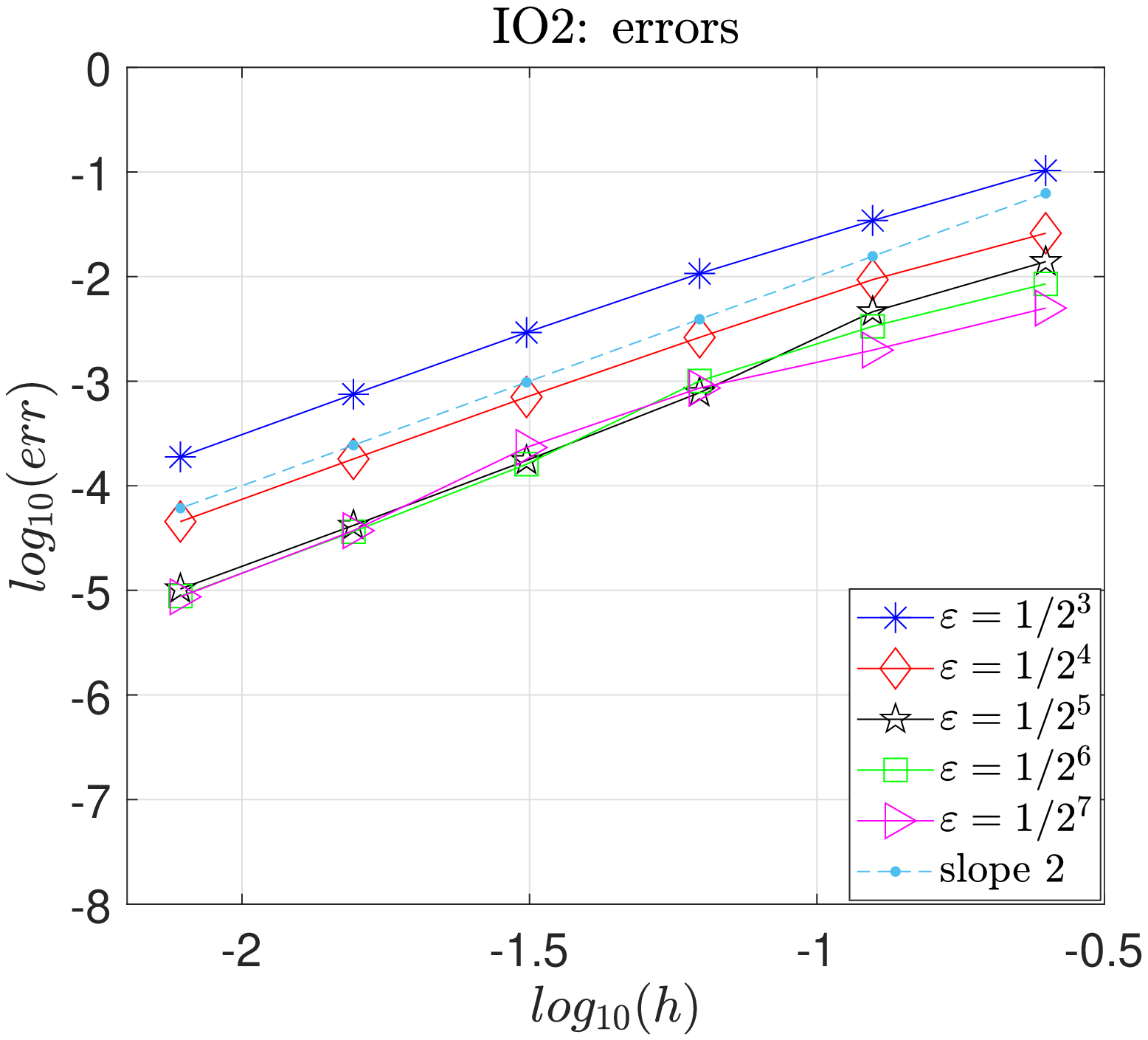,height=4.0cm,width=4.8cm}\\
\psfig{figure=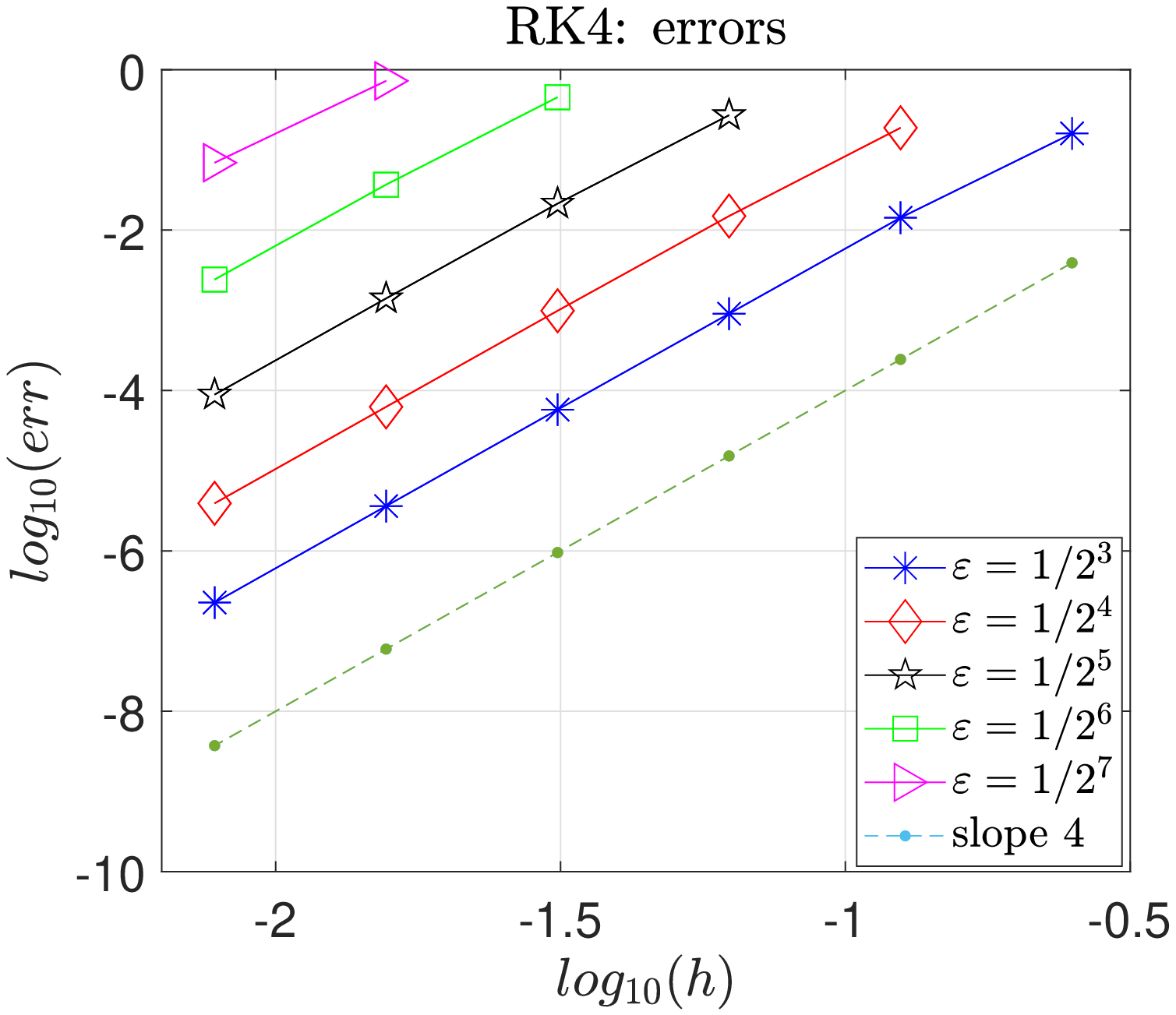,height=4.0cm,width=4.8cm}
\psfig{figure=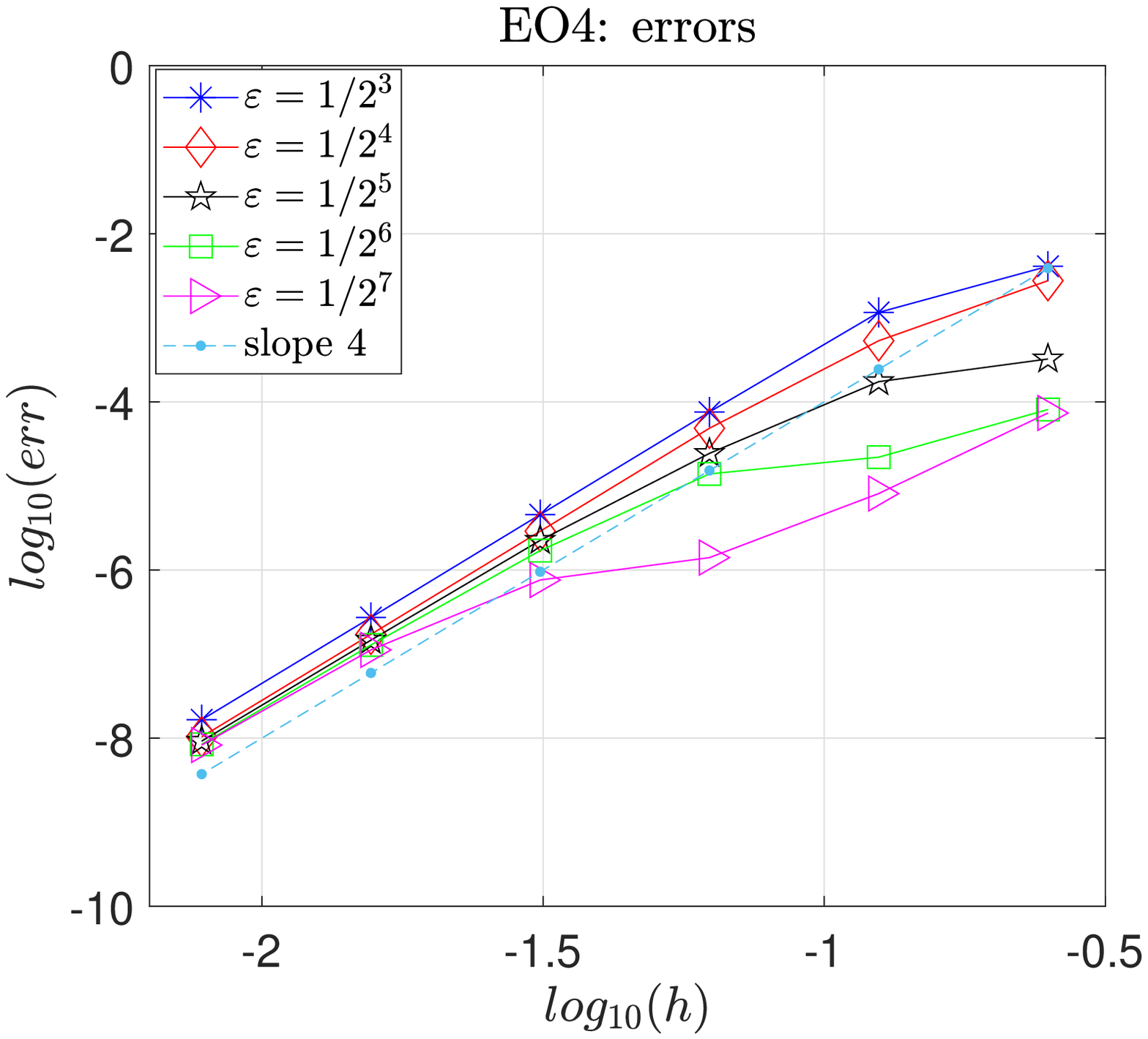,height=4.0cm,width=4.8cm}
\psfig{figure=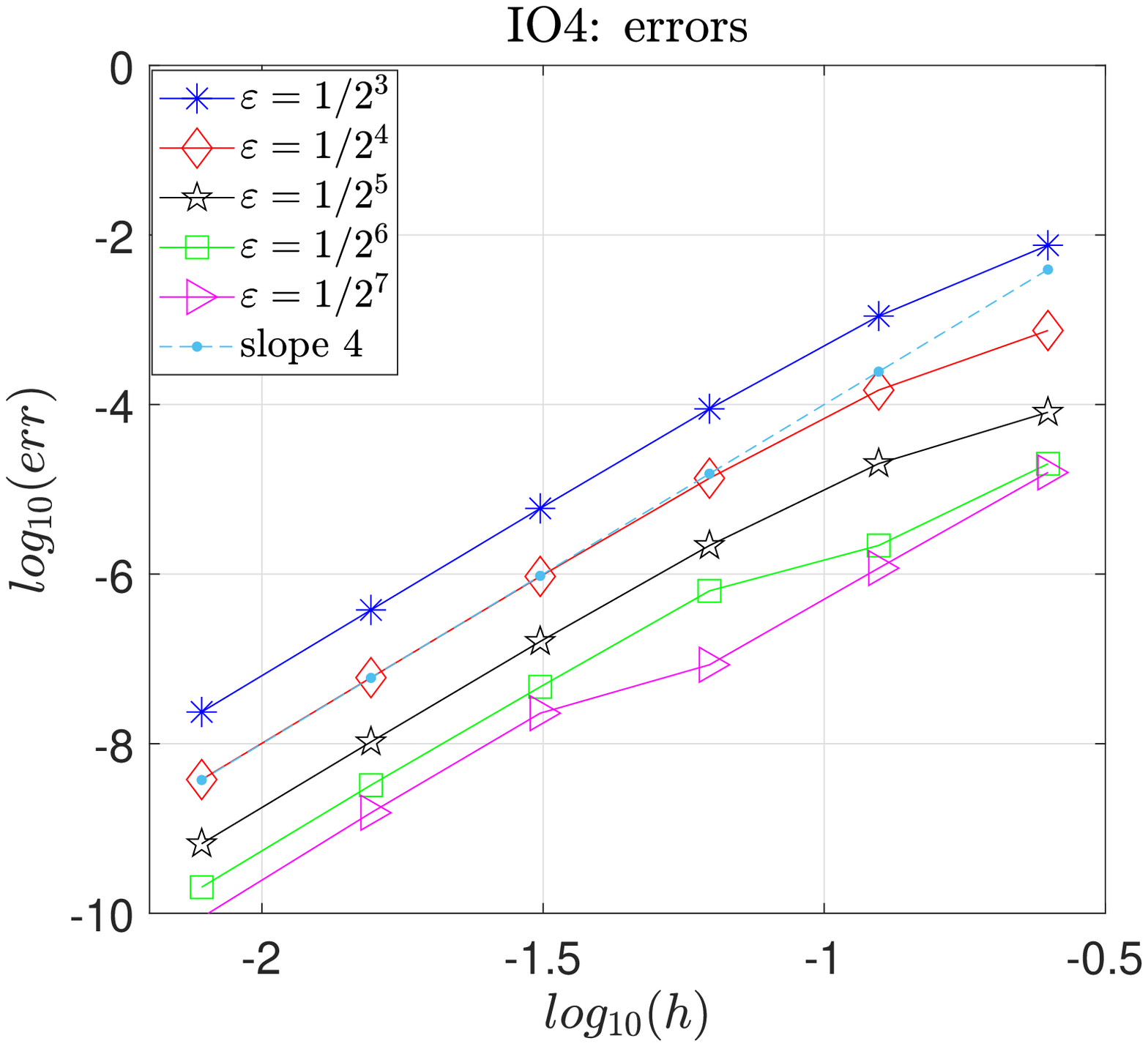,height=4.0cm,width=4.8cm}
\end{array}$$
\caption{The errors (\ref{err})  at $t=1$ of the second-order schemes (top row) and fourth-order schemes (below row)  with $h=1/2^k$ for $k=1,2,\ldots,6$ under different $\eps$.}\label{fign}
\end{figure}

\begin{figure}[t!]
$$\begin{array}{cc}
\psfig{figure=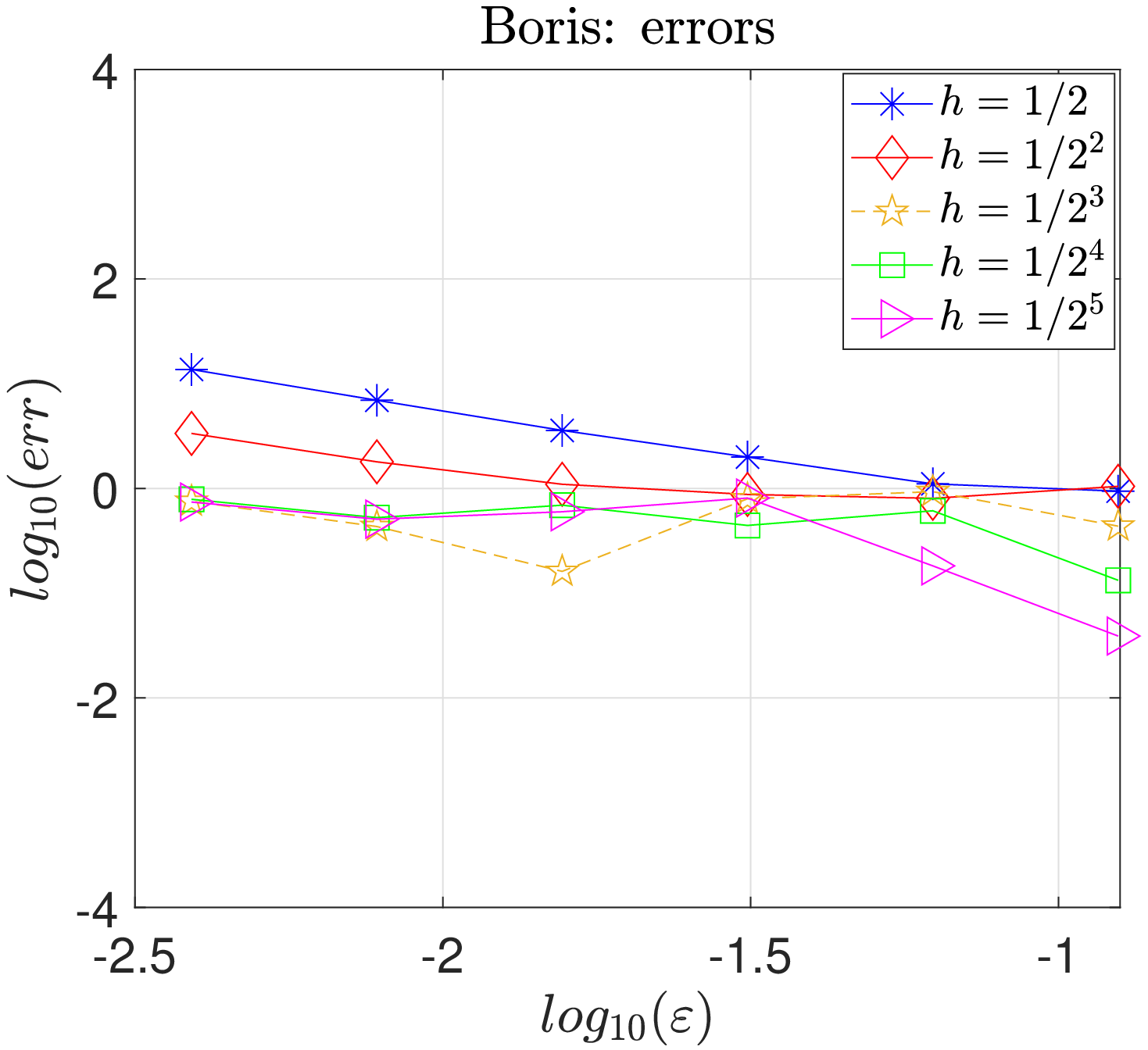,height=4.0cm,width=4.8cm}
\psfig{figure=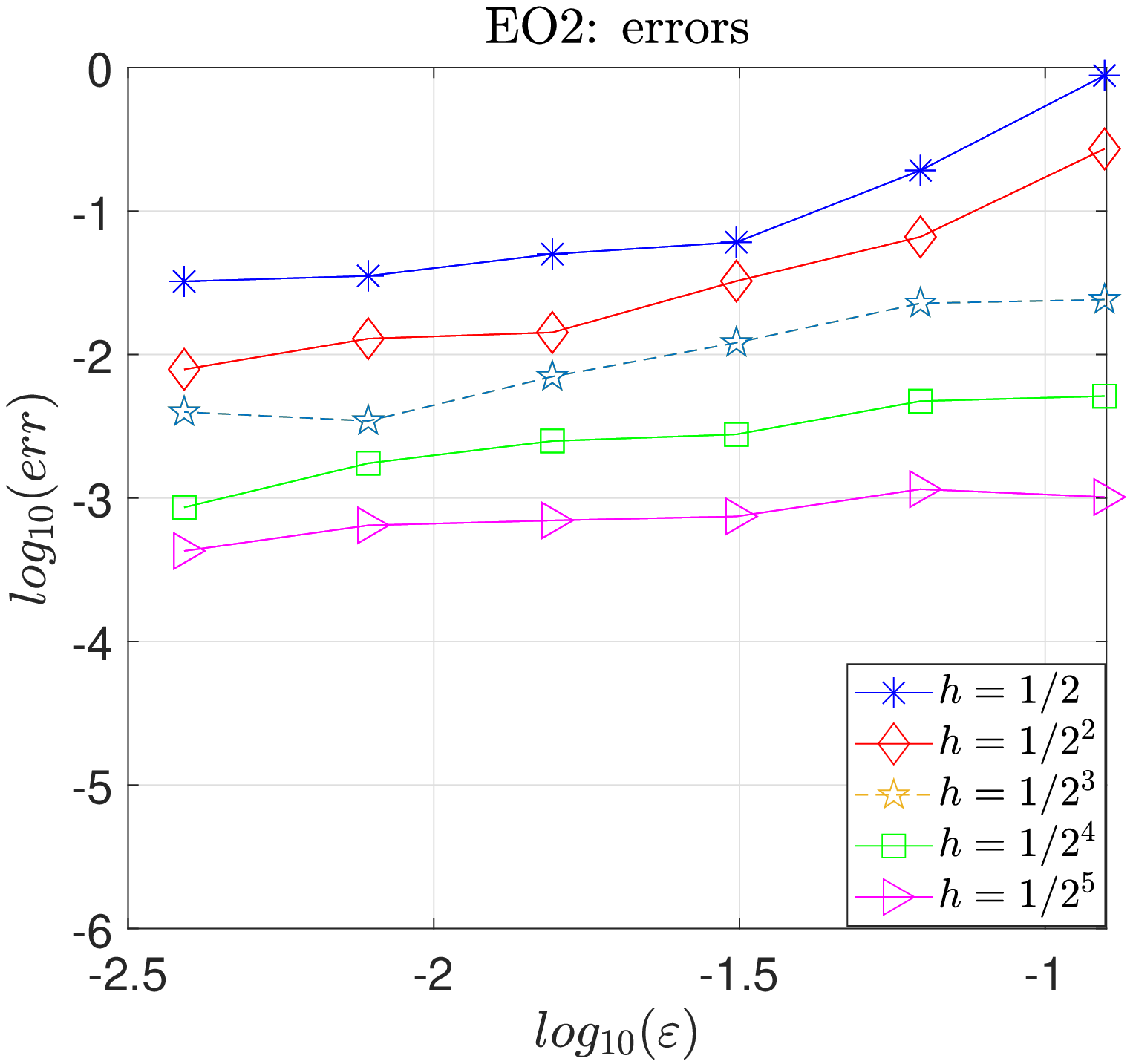,height=4.0cm,width=4.8cm}
\psfig{figure=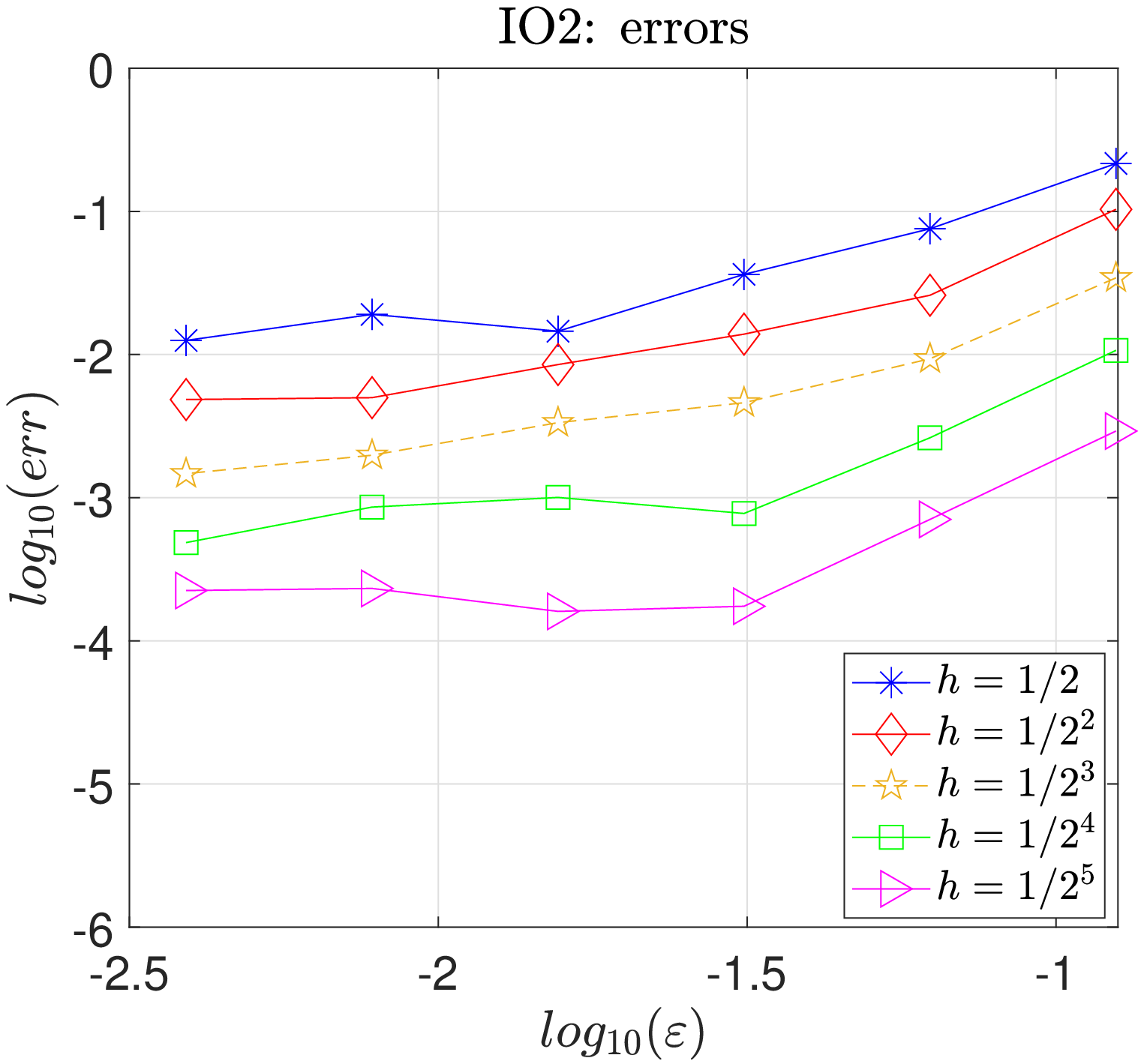,height=4.0cm,width=4.8cm}\\
\psfig{figure=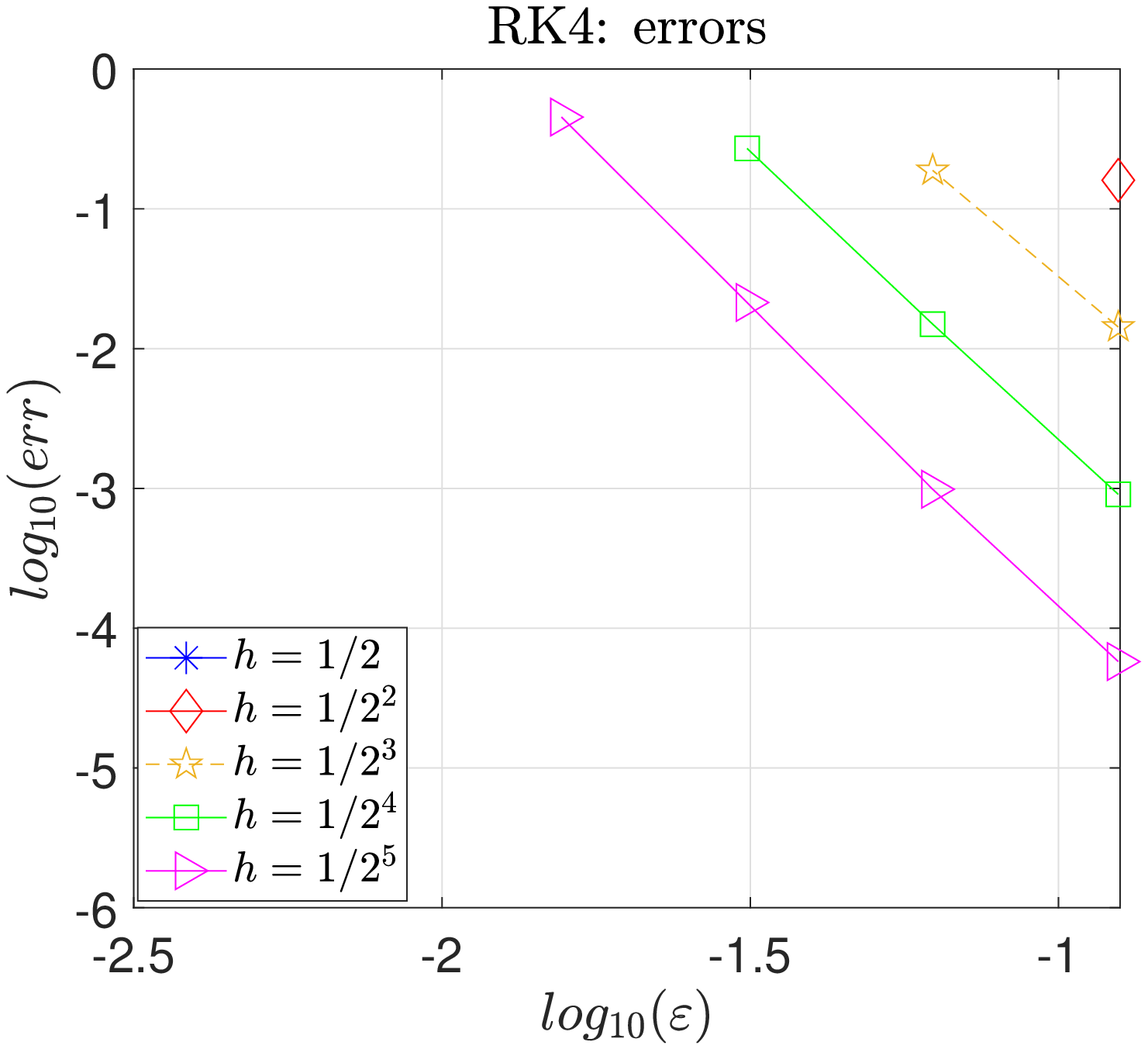,height=4.0cm,width=4.8cm}
\psfig{figure=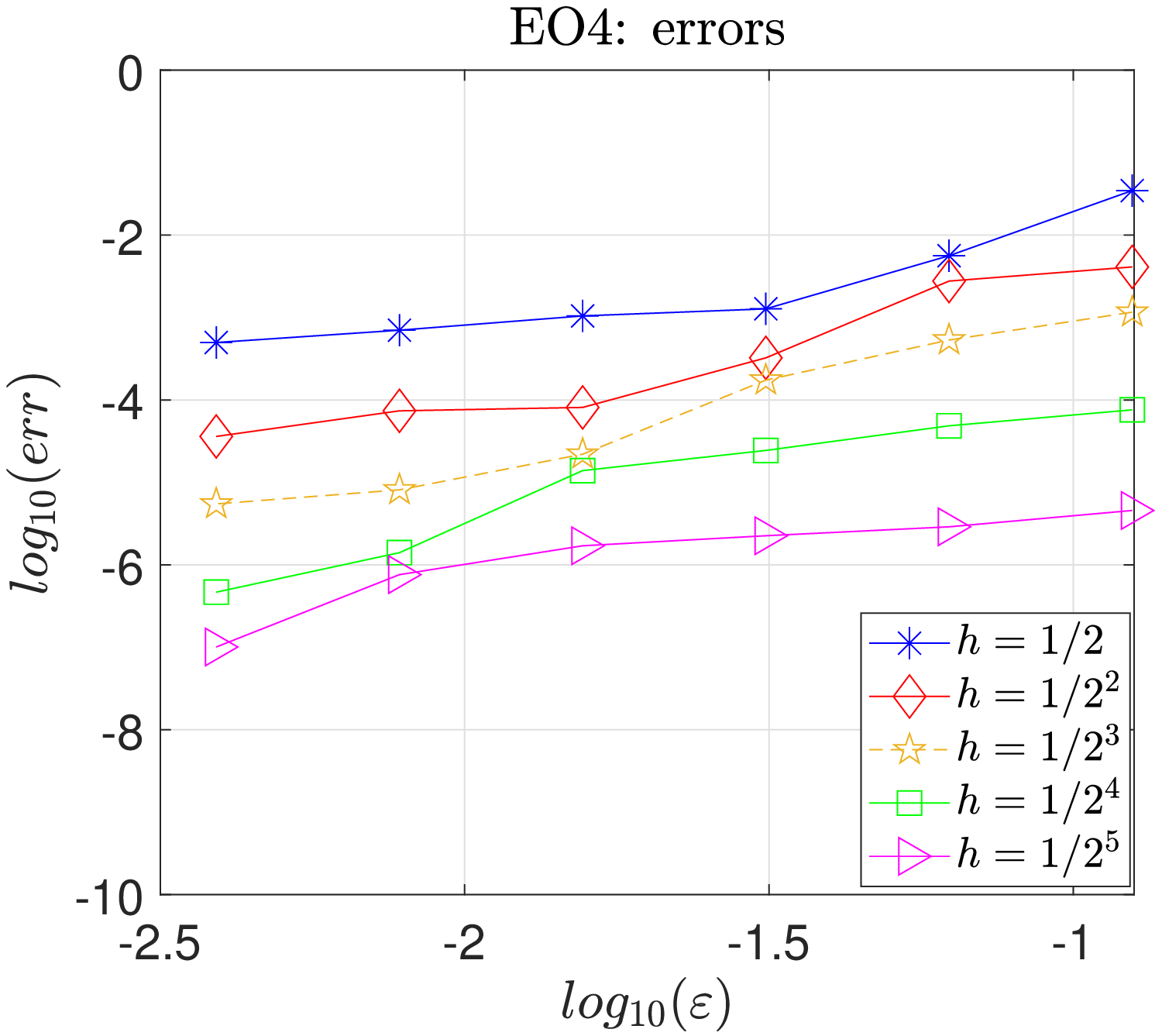,height=4.0cm,width=4.8cm}
\psfig{figure=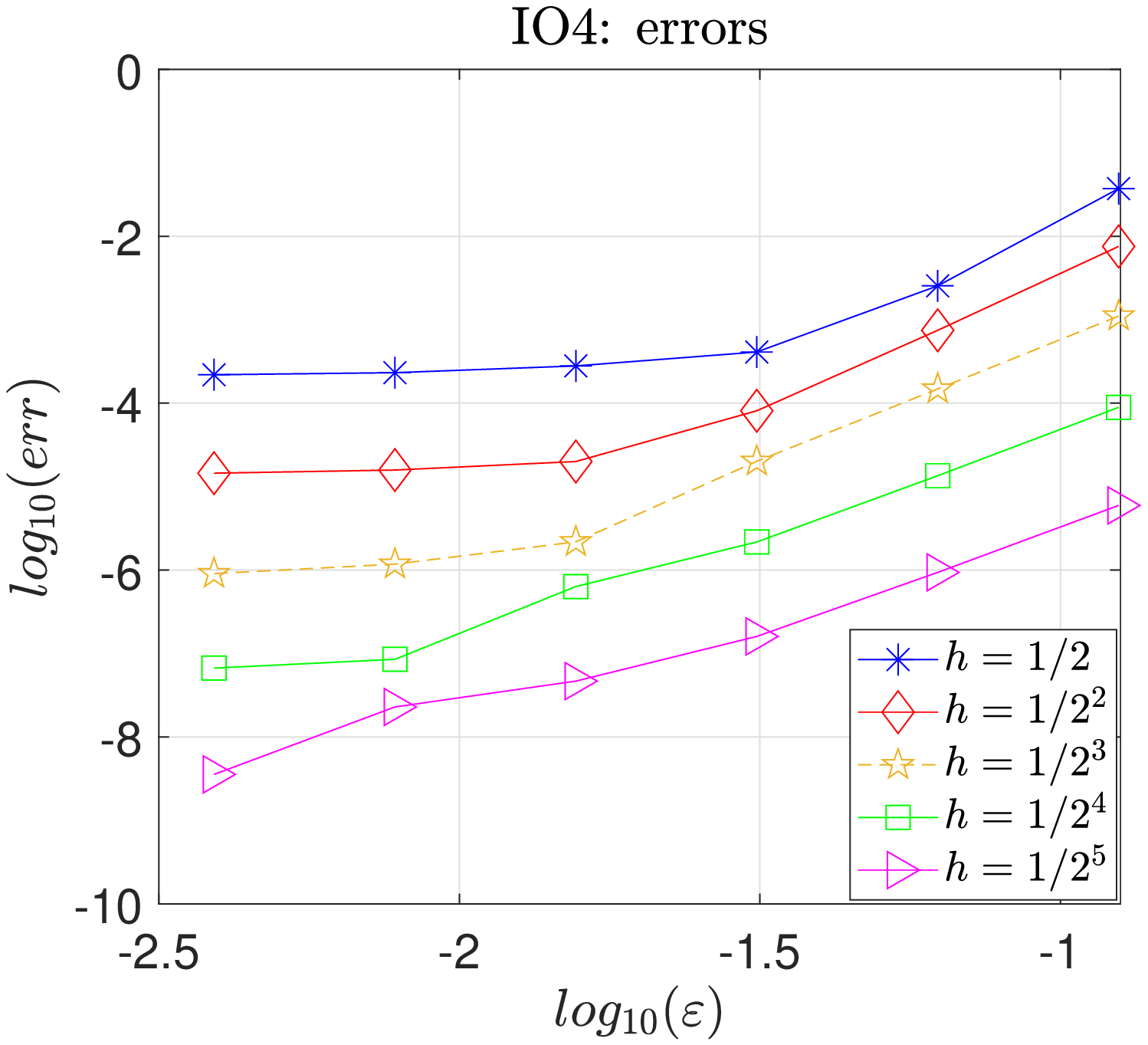,height=4.0cm,width=4.8cm}
\end{array}$$
\caption{The errors (\ref{err})  at $t=1$ of the second-order schemes (top row) and fourth-order schemes (below row)  with $\eps=1/2^k$ for $k=3,4,\ldots,8$ under different $h$.}\label{fign2}
\end{figure}

\section{Conclusion}\label{sec:con}
In this paper, we formulated and studied the numerical solution of the charged-particle dynamics (CPD) in a strong nonuniform magnetic field.
The system involves a small parameter $0<\eps\ll 1 $ inversely proportional to the  strength of the external magnetic field. Firstly, a novel class of semi-discretization  and full-discretization was presented for the  two dimensional CPD  and an optimal accuracy was rigorously derived. It was shown that the accuracy of those discretizations is improved  in the  position and  in the velocity when   $ \eps  $  becomes smaller. Then based on the  approach given  for the two dimensional case,   a kind of uniformly accurate methods with simple scheme was formulated for the three dimensional CPD in maximal ordering case. The optimal accuracy of the obtained discretizations was illustrated by some numerical tests.

Finally, it is remarked that higher-order algorithms with optimal accuracy would be an  issue for future exploration.
Another object of future study could be the complete convergence
analysis of the discretizations introduced in this paper  combining the PIC  approximation for Vlasov equations.

\section*{Acknowledgements}
This work was supported by NSFC 11871393 and by International Science and Technology Cooperation Program of Shaanxi Key Research \& Development Plan 2019KWZ-08.

\end{document}